%
%
%
\documentclass[11pt]{amsart}

\usepackage{amsmath, amsthm, amssymb, euscript, enumerate}
\usepackage{pxfonts, graphicx, subfigure, epsfig}
\usepackage{mathtools, ulem, appendix, accents, color}

\usepackage{amssymb,mathrsfs,graphicx,enumerate}
\title{Abstracts}


\newcommand{\Poincare}{{Poincar\'e}}

\theoremstyle{plain}
\newtheorem{theorem}{Theorem}[section]
\newtheorem{definition}{Definition}[section]

\newtheorem{lemma}[theorem]{Lemma}
\newtheorem{proposition}[theorem]{Proposition}
\newtheorem{exmp}{Example}[section]

\newtheorem*{remark}{Remark}
\numberwithin{equation}{section}


\title{Some Uniformization Problems for a Fourth order Conformal Curvature}

\author{Sanghoon Lee}
\address{Department of Mathematics, Princeton University, Princeton, NJ 08544, USA }
\email{sl29@math.princeton.edu }

\begin{document}

\date{\today}


\maketitle
\begin{abstract}
In this paper, we establish the existence of conformal deformations that uniformize fourth order curvature on 4-dimensional Riemannian manifolds with positive conformal invariants. Specifically, we prove that any closed, compact Riemannian manifold with positive Yamabe invariant and total $Q$-curvature can be conformally deformed into a metric with positive scalar curvature and constant $Q$-curvature. For a Riemannian manifold with umbilic boundary, positive first Yamabe invariant and total $(Q, T)$-curvature, it is possible to deform it into two types of Riemannian manifolds with totally geodesic boundary and positive scalar curvature. The first type satisfies $Q\equiv \text{constant}, T \equiv 0$ while the second type satisfies $Q\equiv 0, T \equiv \text{constant}$.
\end{abstract}

\section{Introduction}

In conformal geometry, the $Q$-curvature of a 4-dimensional Riemannian manifold $(M, g)$ is a curvature invariant associated to the Paneitz operator $P_4$. They are defined respectively by 

\begin{equation}
\label{eq:qcurvature}
 Q = \frac{1}{12}(-\Delta R + R^2 - 3|Ric|^2)
\end{equation}

\begin{equation}
\label{eq:paneitz}
P_4 \phi= \Delta^2 \phi - \mathrm{div}(\frac{2}{3}R g - 2Ric)d\phi
\end{equation}
for any smooth function $\phi$ on $M$. Here, $R$ is the scalar curvature, $Ric$ is the Ricci curvature, $\Delta$ is the Laplace-Beltrami operator, and $\mathrm{div}$ is minus of the adjoint of the exterior derivative $d$. Let $g_w \coloneqq g^{2w}g$ be the conformally deformed metric where $w$ is a smooth function on $M$.
The Paneitz operator the $Q$-curvature exihibit the following conformal covariance property:
\begin{equation*}
(P_4)_w(\phi) = e^{-4w} P_4(\phi), \, P_4 w + 2Q = 2Q_w e^{4w}
\end{equation*}
where $(P_4)_w$ and $Q_w$ are the Paneitz operator and the $Q$-curvature for the metric $g_w$. They are defined in any dimensions greater than or equal to three as discussed in \cite{HY} and other related references. However, an especially interesting aspect in dimension four is that $Q$-curvature is related to the topology of $M$ through the Chern-Gauss-Bonnet formula:
\begin{equation*}
\int_M (Q + \frac{|W|^2}{8} ) dV = 4\pi^2 \chi(M),
\end{equation*}
where $W$ is the Weyl curvature and $\chi(M)$ is the Euler characteristic of $M$. The total $Q$-curvature defined by $k_p \coloneqq \int_M Q dV$ is a conformal invariant. In this regard, $Q$-curvature serves as a four-dimensional analogue of Gaussian curvature of Riemannian surfaces, as both the Laplace-Beltrami operator and the Gaussian curvature exhibit similar conformal covariance properties. The Chern-Gauss-Bonnet formula involving the $Q$-curvature is considered as a four-dimensional version of the Gauss-Bonnet formula.

The study of the analytic aspects and applications to geometry and topology of the $Q$-curvature and the Paneitz operator has been a topic of extensive research in recent decades. Notable works in this field include Chang and Yang \cite{CY}, Chang-Gursky-Yang \cite{CGY}, Djadli and Malchiodi \cite{DM}, Gursky \cite{G1}, \cite{G2}, and Malchiodi \cite{M}.

In the context of manifolds with boundary, represented as $(M,\partial M, g)$, a natural boundary operator $P_3$ and its associated curvature invariant, the $T$-curvature, have been introduced by Chang and Qing \cite{CQ1} and \cite{CQ2}. They are defined as follows:
\begin{equation}
\label{eq:p3operator}
P_3 \phi = \frac{1}{2} \frac{\partial \Delta \phi}{\partial n} + \Delta_{\hat{g}} \frac{\partial  \phi}{\partial n} - 2H \hat{\Delta}  \phi + L(\hat{\nabla}  \phi, \hat{\nabla}  \phi ) + \hat{\nabla} H \cdot \hat{\nabla}  \phi + ( F - \frac{R}{3} ) \frac{\partial  \phi}{\partial n}
\end{equation}

\begin{equation}
\label{eq:tcurvature}
T = -\frac{1}{12} \frac{\partial R}{\partial n} + \frac{1}{2} R\cdot H - \langle G, L\rangle + 3H^3 - \frac{1}{3} Tr(L^3) + \hat{\Delta} H 
\end{equation}
for any smooth function $\phi$ on $M$. Here, $\hat{\Delta}$, $\hat{\nabla}$ are the Laplace-Beltrami operator and covariant derivative with respect to the induced metric on the boundary $\hat{g} = g|_{\partial M}$. $L$ is the second fundamental form of the boundary, $H$ is the mean curvature, $F = R_{anan}$, $\langle G, L \rangle = R_{anbn}L_{ab}$ where $n$ is the inward normal vector field along the boundary and $a, b$ denote tangential components along the boundary.

They also have the conformal covariance property,
\begin{equation}
(P_3)_w = e^{-3w} P_3, P_3w + T = T_w e^{3w}.
\end{equation}

In four dimension, we the have the following Chern-Gauss-Bonnet formula involving $Q$-curvature and $T$-curvature:
\begin{equation}
\int_M (Q + \frac{|W|^2}{8} ) dV_g + \oint_{\partial M} (T-\mathcal{L}_4 - \mathcal{L}_5) dS_g = 4\pi^2 \chi(M)
\end{equation}
where $\mathcal{L}_4$ and $\mathcal{L}_5$ are curvature quantities defined explicitly in \cite{CQ1}.
As we will be focusing on manifolds with totally geodesic boundaries in this paper, the pointwise conformally invariant quantity $(\mathcal{L}_4 + \mathcal{L}_5)dS_g$  is not relevant to our discussion as it vanishes in such cases.
Also note that the total $(Q, T)$-curvature $k_{(P_4, P_3)} \coloneqq \int_M Q dV+ \oint T dS$ is a conformal invariant. 

It is natural to consider the uniformization theorem for $Q$-curvature, and for manifolds with a boundary, the pair $(Q, T)$. This was first studied for closed manifolds in \cite{CY} and subsequently in \cite{DM} for more general cases. Uniformization theorems for $(Q, T)$-curvature have been investigated in works by Catino and Ndiaye \cite{CN}, Ndiaye \cite{N1}, and \cite{N2}.

\bigskip

In this paper, we present a stronger uniformization theorem that not only controls the $Q$-curvature, but also the sign of the scalar curvature under the condition that  the Yamabe constant and total $Q$-curvature are positive.The Yamabe constant, defined for a 4-dimensional Riemannian manifold $(M, g)$ as 
\begin{equation}
Y(M, g) = \inf_{\tilde{g} \in [g] } \frac{\int_M \tilde{R} dVol_{\tilde{g}}}{Vol(M, \tilde{g})^{\frac{1}{2}}}
\end{equation}
is known to be positive if and only if there exists a metric in the conformal class $[g]$ with positive scalar curvature. In dimensions greater than or equal to 6, it has been established that the existence of a conformal metric with positive scalar and $Q$-curvature is equivalent to the positivity of both the Yamabe invariant and the Paneitz operator, as shown in Gursky-Han-Lin \cite{GHL}. In dimension 4, Gursky \cite{G2} proved that the Paneitz operator is non-negative when both the Yamabe invariant and total $Q$-curvature are non-negative. With this in mind, our uniformization theorem can be stated as follows.
\begin{theorem}
\label{thm:main1}
Let $(M, g)$ be a four-dimensional closed manifold with conformal invariants $k_p$ and $Y(M, [g])$ positive. Then there exists a conformal deformation $w$ such that $Q_w$ is constant, and $R_w>0$.
\end{theorem}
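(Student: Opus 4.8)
The plan is to combine the existing variational theory for constant $Q$-curvature with Gursky's positivity theorem for the Paneitz operator, and then to enhance that construction so as to also control the sign of the scalar curvature; the positivity of the Yamabe invariant enters precisely at this last stage. To begin, since $k_p>0$ and $Y(M,[g])>0$, Gursky's theorem \cite{G2} gives $P_4\ge 0$, and since $Y(M,[g])$ is strictly positive one moreover has $\ker P_4=\{\text{constants}\}$; this is the structural input that makes the prescribed constant $Q$-curvature problem tractable.

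I would then recall the Chang--Yang functional
\begin{equation*}
II(w)=\langle P_4 w,\, w\rangle_{L^2(g)} + 4\int_M Q\, w\, dV_g - k_p\,\log\!\left(\tfrac{1}{\mathrm{Vol}(M,g)}\int_M e^{4w}\, dV_g\right),\qquad w\in H^2(M),
\end{equation*}
whose critical points solve $P_4 w + 2Q = 2\bar Q\, e^{4w}$ with $\bar Q=k_p\big/\!\int_M e^{4w}\, dV_g$, so that $Q_{g_w}\equiv\bar Q$ and, crucially, $\bar Q>0$ because $k_p>0$. When $k_p<8\pi^2$ the Adams--Fontana inequality together with $P_4\ge 0$ and $\ker P_4=\{\text{constants}\}$ makes $II$ coercive modulo constants and bounded below, so a minimizer exists (Chang--Yang \cite{CY}); for general $k_p>0$ one invokes instead the min--max argument of Djadli--Malchiodi \cite{DM}. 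This yields a conformal metric with constant positive $Q$-curvature, but with no control on its scalar curvature, so the theorem does not follow directly.

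To force $R>0$ I would pass to a constrained version of this problem. Using $Y(M,[g])>0$, replace $g$ by a conformal representative with $R_g>0$; then $R_{g_w}=e^{-2w}\big(R_g-6\Delta_g w-6|\nabla_g w|^2\big)$, and the set $\mathcal{C}=\{w\in H^2(M):R_g-6\Delta_g w-6|\nabla_g w|^2\ge 0 \text{ a.e.}\}$ is closed and nonempty (it contains the constants). Minimize $II$ over $\mathcal{C}$. Granting that (i) the constrained infimum is attained at some $w_0$ and (ii) the constraint is inactive at $w_0$, one concludes that $w_0$ is an unconstrained critical point of $II$, so $Q_{g_{w_0}}\equiv\bar Q>0$ and $R_{g_{w_0}}>0$, which is the assertion. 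For (i), when $k_p<8\pi^2$ coercivity and the closedness of $\mathcal{C}$ suffice; for larger $k_p$ one must preclude concentration of minimizing sequences, and here the constraint should itself supply the needed compactness, since a standard spherical bubble forces the scalar curvature to change sign across the neck and thereby leaves $\mathcal{C}$ --- this is the mechanism by which one would hope to dispense with the restriction $k_p<8\pi^2$ altogether. The implication ``$R_{g_{w_0}}\ge 0\Rightarrow R_{g_{w_0}}>0$'' is then essentially automatic: at a constant $Q$-curvature metric $R:=R_{g_{w_0}}$ satisfies
\begin{equation*}
-\Delta_{g_{w_0}} R + \tfrac14 R^2 = 12\bar Q + 3\,|E|^2 \ \ge\ 12\bar Q \ >\ 0,
\end{equation*}
with $E=Ric-\tfrac14 R\,g$ the traceless Ricci tensor, so $R$ cannot vanish at its (attained) minimum.

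The main obstacle is step (ii): removing the scalar-curvature constraint at the minimizer. Since $\mathcal{C}$ is cut out by a pointwise inequality on the nonlinear quantity $R_g-6\Delta_g w-6|\nabla_g w|^2$, the Euler--Lagrange condition at $w_0$ is only a variational inequality, with a Lagrange multiplier that is a priori merely a nonnegative measure supported on $\{R_{g_{w_0}}=0\}$; one must show this multiplier vanishes. Reconciling the fourth-order relation $P_4 w_0+2Q-2\bar Q e^{4w_0}=(\text{multiplier})$ with the second-order maximum principle for $R_{g_{w_0}}$ and with the positivity of the conformal Laplacian coming from $Y>0$ is the delicate point and is where most of the analytic effort will lie. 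The compactness assertion in (i) for $k_p\ge 8\pi^2$ is a secondary, but still nontrivial, technical issue: if a sufficiently quantitative control of the scalar curvature along a concentrating conformal factor proves elusive, one can retreat to $k_p<8\pi^2$ (treating $S^4$ directly) and use the constraint purely to upgrade the Chang--Yang minimizer to one with positive scalar curvature.
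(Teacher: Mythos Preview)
Your proposal contains a genuine gap at the point you yourself flag: step~(ii), showing that the scalar-curvature constraint is inactive at the constrained minimizer, is not a technicality but the entire content of the theorem, and you offer no mechanism for it. If $w_0$ minimizes $II$ over $\mathcal{C}$ and the contact set $\{R_{g_{w_0}}=0\}$ is nonempty, the Euler--Lagrange condition reads $P_4 w_0+2Q-2\bar Q e^{4w_0}=\mu$ for some nonnegative measure $\mu$ supported on that set; your maximum-principle argument for $R$ uses the identity $-\Delta R+\tfrac14 R^2=12\bar Q+3|E|^2$, which is exactly the statement $\mu=0$ and hence cannot be invoked to prove it. I do not see, and you do not indicate, any structure linking the fourth-order multiplier $\mu$ to the second-order quantity $R$ that would force $\mu=0$. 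Constrained problems with pointwise obstacle-type constraints and fourth-order Euler--Lagrange operators are notoriously resistant to this kind of free-boundary removal.

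The paper avoids the obstacle problem entirely by a two-stage argument. First, it runs a continuity method on a family $F_t=\gamma(t)I+tII+(1-t)III$ that mixes the Chang--Yang functional with $III[w]=\tfrac{1}{36}\int(R_w^2-R_0^2)$; the point is that the Euler--Lagrange equation \emph{itself} then contains a $-\Delta_t R_t$ term, and its very structure forces $-\Delta_t R_t+\tfrac{t}{4}R_t^2>0$, so Gursky's maximum-principle lemma yields $R_t>0$ automatically along the whole path. This produces a background metric with $Q>0$ and $R>0$ pointwise. Second, starting from that metric, the paper applies Leray--Schauder degree theory (not minimization) to the homotopy $P_4 w_t=2tk_p e^{4w_t}-2tQ$ inside the open set $\{R_u>0\}$; compactness comes from a blow-up analysis using the normal-metric classification, and solutions cannot touch the boundary $\{R_u=0\}$ because $Q_t>0$ forces strict positivity of $R_t$ by the strong maximum principle. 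In short, the paper builds the scalar-curvature positivity into the \emph{equation} rather than imposing it as an external constraint, which is precisely what sidesteps the Lagrange-multiplier difficulty you are facing.

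One minor remark: your discussion of ``larger $k_p$'' and attempting to ``dispense with the restriction $k_p<8\pi^2$'' is moot. Under $Y(M,[g])>0$ Gursky's theorem already gives $k_p\le 8\pi^2$, with equality only for the round $S^4$; so after disposing of $S^4$ by hand one is always in the subcritical regime $k_p<8\pi^2$, and no concentration analysis for $II$ is needed at all.
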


For manifolds with a boundary, the first Yamabe invariant is defined as follows

\begin{equation}
Y(M, \partial M ,[g] ) = \inf_{\tilde{g} \in [g] } \frac{\int_M \tilde{R} dVol_{\tilde{g}} + \oint_{\partial M} H dS_{\tilde{g}}}{Vol(M, \tilde{g})^{\frac{1}{2}}}.	
\end{equation}

We present a uniformization theorem for manifolds with an umbilic boundary. Note that an umbilic boundary can always be conformally deformed to a totally geodesic boundary.
\begin{theorem}
\label{thm:main2}
Let $(M, g)$ be a compact manifold with an umbilic boundary and positive conformal invariants $k_{(P_4, P_3)}$ and $Y(M, \partial M, [g])$. Then there exist conformal deformations $w_1$ and $w_2$ such that

$\begin{cases} 
Q_{w_1} \equiv \frac{k_{(P_4, P_3)}}{Vol(M, g_{w_1})}, \, R_{w_1}>0  \text{ on $M$} \\ 
T_{w_1} \equiv 0, \, H_{w_1} \equiv 0  \text{ on $\partial M$}
\end{cases}$
 and    \,
$\begin{cases} 
Q_{w_2} \equiv 0, \, R_{w_2}>0 \text{ on $M$} \\ 
T_{w_2} \equiv \frac{k_{(P_4, P_3)}}{Vol(\partial M, g_{w_2})}, \, H_{w_2} \equiv 0 \text{ on $\partial M$}.
\end{cases}$

\end{theorem}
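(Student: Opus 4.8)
## Proof Strategy for Theorem \ref{thm:main2}

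The plan is to adapt the variational framework used for closed manifolds (as in Theorem \ref{thm:main1} and the work of Chang--Yang and Djadli--Malchiodi) to the boundary setting, following Ndiaye's treatment of $(Q,T)$-curvature, but carrying along the additional scalar/mean-curvature positivity. The crucial structural input is that an umbilic boundary can be conformally deformed to a totally geodesic one; after this reduction we may assume $L \equiv 0$, so that the boundary term $(\mathcal{L}_4 + \mathcal{L}_5)\,dS_g$ vanishes identically and the Chern--Gauss--Bonnet formula simplifies to $\int_M Q\,dV_g + \oint_{\partial M} T\,dS_g + \tfrac18 \int_M |W|^2\,dV = 4\pi^2\chi(M)$. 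In the totally geodesic case the relevant fourth-order operator is the pair $(P_4, P_3)$ acting on $w$ with the Neumann-type boundary condition $\partial w/\partial n = 0$ implicit in the totally geodesic normalization; one checks that under the positivity hypotheses on $Y(M,\partial M,[g])$ and $k_{(P_4,P_3)}$, Gursky's argument (or its boundary analogue) gives that this operator is non-negative with kernel consisting only of the constants. This is the replacement for the coercivity needed to run the direct method.

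For the first deformation $w_1$ (achieving $Q \equiv \text{const}$, $T \equiv 0$), I would set up the functional
\begin{equation*}
II(w) = \langle (P_4,P_3) w, w\rangle + 4\int_M Q\, w\, dV_g + 4\oint_{\partial M} T\, w\, dS_g - k_{(P_4,P_3)} \log \left( \frac{1}{Vol(M,g)}\int_M e^{4w}\, dV_g \right),
\end{equation*}
whose Euler--Lagrange equation is exactly $P_4 w + 2Q = 2\bar{Q}\, e^{4w}$ on $M$ together with $P_3 w + T = 0$ on $\partial M$, where $\bar Q = k_{(P_4,P_3)}/Vol(M,g_{w})$. When $0 < k_{(P_4,P_3)} < 4\pi^2\chi(M)$ (the subcritical regime) one obtains a Moser--Trudinger--Beckner-type inequality on $(M,\partial M)$ giving coercivity of $II$, and a minimizer exists by the direct method. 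In the general case $k_{(P_4,P_3)} \in (0,\infty)$ one needs the topological/min-max scheme of Djadli--Malchiodi adapted to the boundary as in Ndiaye: analyze the low sublevels of $II$ via concentration of $e^{4w}\,dV$ onto finitely many points, build a map to a join of barycenter spaces, and use a suitable non-contractibility to produce a critical point at higher energy. For the second deformation $w_2$ ($Q \equiv 0$, $T \equiv \text{const}$) the roles of bulk and boundary are swapped: one minimizes (or min-maxes) the functional with the $\log$ term replaced by $k_{(P_4,P_3)}\log\big(\tfrac{1}{Vol(\partial M,g)}\oint_{\partial M} e^{3w}\,dS_g\big)$, whose critical points solve $P_4 w + 2Q = 0$ in $M$ and $P_3 w + T = \bar T\, e^{3w}$ on $\partial M$ with $\bar T = k_{(P_4,P_3)}/Vol(\partial M,g_{w_2})$.

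The final and genuinely new step is upgrading the conformal factor produced above so that the scalar curvature (and hence, since the boundary is totally geodesic, the mean curvature) becomes positive. Here I would invoke the positivity of $Y(M,\partial M,[g])$: it guarantees a background metric $g_0 \in [g]$ with $R_{g_0} > 0$ and $H_{g_0} \equiv 0$, and then appeal to the boundary analogue of Gursky's theorem — non-negativity of $(P_4,P_3)$ with trivial kernel — to run a maximum-principle / flow argument showing that the metric $g_{w_i}$ solving the $Q,T$ equation can be chosen within the positive scalar curvature cone. Concretely, one shows that if $R > 0$ somewhere and $\overline{Q} \ge 0$, the equation $P_4 w + 2Q = 2\overline Q e^{4w}$ forces $R_w > 0$ everywhere via the strong maximum principle applied to the second-order operator $-\Delta + \tfrac{R}{6}$ obtained by factoring $P_4$ along a positive conformal metric (the Gursky--Han--Lin-type factorization), using that the first eigenvalue of the conformal Laplacian is positive. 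The main obstacle is precisely this coupling: ensuring the min-max critical point (which a priori only controls $Q$ and $T$) also lies in the positive scalar curvature region, which requires showing the constructed critical point is not "lost" when one restricts the variational problem to the open cone $\{R_w>0\}$ — equivalently, that the barrier/blow-up analysis of the Djadli--Malchiodi scheme is compatible with the scalar-curvature constraint. I expect this compatibility to follow from an a priori lower bound on $R_w$ along the min-max path, derived from Gursky's inequality relating $\int |Ric|^2$, $\int R^2$ and $k_{(P_4,P_3)}$ under the Yamabe positivity assumption.
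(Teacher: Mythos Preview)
Your variational outline would produce a constant-$(Q,T)$ metric, but the step where you ``upgrade'' that metric to one with $R_w>0$ is where the argument breaks, and this is precisely the content of the theorem beyond what Ndiaye already proved.  First, a small point: under the standing hypotheses $Y>0$ and $k_{(P_4,P_3)}>0$ one has automatically $k_{(P_4,P_3)}<4\pi^2$ (the boundary analogue of Gursky's bound, with equality only for $S^4_+$), so the Djadli--Malchiodi min-max machinery is never needed here; direct minimization of your functional $II$ already gives a critical point.  The serious issue is that nothing forces this minimizer to have positive scalar curvature.  Your two proposed mechanisms do not work: the Gursky--Han--Lin factorization of $P_4$ through the conformal Laplacian is a phenomenon of dimensions $n\ge 6$, not $n=4$; and applying the strong maximum principle directly to the constant-$Q$ equation $-\Delta R+\tfrac14 R^2=3|E|^2+12\bar Q>0$ only tells you $R\neq 0$ at an interior minimum, not that $R>0$ --- a metric with $R<0$ everywhere is not excluded by this inequality alone.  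The vague ``a priori lower bound on $R_w$ along the min-max path'' is exactly the missing ingredient, and there is no Gursky-type integral inequality that supplies it.

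The paper takes a completely different route that sidesteps this difficulty.  Rather than first solving the $(Q,T)$ problem and then trying to fix the scalar curvature, it \emph{builds the constraint $R>0$ into the existence argument from the beginning}.  Step one uses a continuity method on a family of functionals $F^i_t=\gamma(t)I+tII_b+(1-t)III$ (the $III$ term is $\tfrac1{36}\int R^2$) running from $t=\tfrac23$ to $t=1$; the Euler--Lagrange equation at each $t$ has the form $-\Delta_t R_t+\tfrac{t}{4}R_t^2=\text{(positive)}$, so Lemma~\ref{lemma:g1} together with Hopf's lemma keeps $R_t>0$ along the whole path.  At $t=1$ this yields a background metric with $Q>0$, $R>0$, $T=0$, $H=0$ (respectively $Q=0$, $R>0$, $T>0$, $H=0$).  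Step two then applies Leray--Schauder degree theory, not minimization, to the one-parameter family $Q_t=tk_{(P_4,P_3)}+(1-t)Qe^{-4w_t}$ on the open set $\mathcal{O}=\{R_u>0,\ \|u\|_{C^{4,\alpha}}<C_\alpha\}$; because $Q_t>0$ for every $t$, the strong maximum principle prevents solutions from touching $\partial\mathcal{O}$ where $R=0$, and a blow-up/compactness argument (normal metrics, Xu's classification) prevents escape through the norm boundary.  The degree is $1$ at $t=0$ and hence at $t=1$.  The point is that positivity of $R$ is never recovered \emph{a posteriori}; it is propagated through the entire construction.
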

\bigskip
Now we describe our strategy and plan of this paper to prove the main Theorem 1.1 and Theorem 1.2. In Part 1, we study compact closed manifolds and prove Theorem 1.1. There are two major steps toward the proof of Theorem 1.1. The first step is to deform the background metric using the continuity method so that both the scalar curvature and the $Q$-curvature are positive pointwisely. Then, we apply the Leray-Schauder degree theory to a 1-parameter family of fourth-order equations to establish the existence of the conformal metric described in Theorem 1.1.

In Section 2, we review functionals that were defined in \cite{BO} and appeared while computing explicit formulas for functional determinants of conformally covariant operators in 4-dimensional Riemannian manifolds. These functionals are studied in \cite{CY}, \cite{CGY} and \cite{G2} to derive interesting geometric results. Followng the approach used in \cite{CGY} and  \cite{GHL}, we use the method of continuity for critical points of suitable linear combinations of those functionals.  We prove that we can find a minimizer with positive scalar curvature at the starting functional. The closedness part of the continuity process follows from the uniform $W^{2,2}(M)$-estimate of critical points.

In Section 3, we prove the openness part of the continuity process by showing contant functions are the only solutions  linearized problems. Combining this with the uniform $W^{2,2}(M)$-estimate of critical points, we conclude that the background metric can be conformally deformed into a metric with positive scalar curvature and $Q$-curvature using the method of continuity. The main key ingredient is that the positivity of scalar curvature can be preserved throughout this process. This concludes the first step towards proving the main theorem.

In Section 4, we study a 1-parameter family of equations of the form
\begin{equation}
\label{eq:ls}
P_4 w_t = 2tk_p e^{4w_t} - 2tQ
\end{equation}
subject to the constraint $R_t >0$ and normalization $\int_M e^{4w_t} = 1$. Note that when $t=0$, the equation is simply a linear equation, and that the solution of the equation at $t=1$ gives us the desired conformal deformation of Theorem 1.1. The compactness result is essential to apply the Leray-Schauder degree theory to this 1-parameter family of equations. A more general compactness theorem than we need has already been proved in \cite{M}. The theorem states:
\begin{theorem}(\cite[Theorem 1.1]{M}) \label{theorem:malchiodi}
Suppose  $Ker \, P_4 = \{ \text{constants} \}$ and that $\{ u_l\}_l$ is a sequence of solutions of 
$$P_4 u_l + 2Q_l = 2k_l e^{4u_l}, \,\,\int_M e^{4u_l} dV = 1$$
 with $\{Q_l \}_l$ satisfying $Q_l \rightarrow Q_0$ in $C^0 (M)$. Assume also that $k_p \ne 8k\pi^2$ for $k= 1, 2, \cdots$. Then $\{u_l \}_l$ is bounded in $C^{\alpha} (M)$ for any $\alpha \in (0,1)$.
\end{theorem}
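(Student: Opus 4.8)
\emph{Proof idea (a concentration-compactness argument).} I would prove this by adapting to the fourth-order setting the blow-up analysis that Brezis-Merle and Li-Shafrir developed for the Liouville equation $-\Delta u = V e^{u}$ in dimension two; this is essentially the scheme carried out in detail in \cite{M}. The first step is normalization. Write $u_l = \bar u_l + v_l$ with $\bar u_l = \frac{1}{\mathrm{Vol}(M,g)}\int_M u_l\,dV$ the mean and $\int_M v_l\,dV = 0$. Integrating the equation and using that $P_4$ is in divergence form, so that $\int_M P_4 u_l\,dV = 0$, gives $k_l = \int_M Q_l\,dV$; hence $k_l$ is bounded and $k_l \to k_p$. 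Jensen's inequality applied to $1 = \int_M e^{4u_l}\,dV = e^{4\bar u_l}\int_M e^{4v_l}\,dV \ge e^{4\bar u_l}$ shows $\bar u_l$ is bounded above.

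Next I would use the Green's representation. Since $P_4$ is self-adjoint with $\mathrm{Ker}\,P_4 = \{\mathrm{constants}\}$, it has a Green's function $G$ on the orthogonal complement of the constants with logarithmic diagonal singularity $G(x,y) = -\frac{1}{8\pi^2}\log|x-y| + O(1)$, so that
\[
v_l(x) = \int_M G(x,y)\bigl(2k_l e^{4u_l(y)} - 2Q_l(y)\bigr)\,dV(y).
\]
The measures $e^{4u_l}\,dV$ have total mass $1$, so after passing to a subsequence $e^{4u_l}\,dV \rightharpoonup \mu$ weakly-$*$. Define the blow-up set $S = \{x\in M : \mu(\{x\}) \ge \beta_0\}$, where $\beta_0 > 0$ is a fixed threshold coming from the Adams-Moser-Trudinger inequality for $P_4$; then $S$ is finite, with $\#S \le 1/\beta_0$.

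If $S = \emptyset$, then for every $x$ one has $\int_{B_r(x)} e^{4u_l}\,dV < \varepsilon_0$ for all large $l$ and small $r$, and a Brezis-Merle-type estimate (exploiting the logarithmic kernel of $G$) yields a uniform bound $\|e^{4u_l}\|_{L^{1+\delta}(B_{r/2}(x))}\le C$. Plugging this into the representation formula and applying elliptic regularity for $P_4$, the oscillation $v_l$ is bounded in $W^{4,1+\delta}_{\mathrm{loc}}$, hence in $C^{\alpha}(M)$; since $\int_M e^{4v_l}\,dV$ is then bounded above and below, $1 = e^{4\bar u_l}\int_M e^{4v_l}\,dV$ forces $\bar u_l$ to be bounded, so $u_l$ is bounded in $C^{\alpha}(M)$ and the theorem follows.

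It therefore remains to exclude $S \neq \emptyset$, which is the heart of the matter. Near a point of $S$ one selects concentration points $x_l$ and scales $\mu_l \to 0$ adapted to the local maxima of $u_l$, and rescales $\hat u_l(y) = u_l(x_l + \mu_l y) + \log\mu_l + c_l$ in normal coordinates, with $c_l$ a constant depending on $k_l$ chosen so that the limiting equation is $\Delta^2 w = 6e^{4w}$ on $\R^4$. Using the Green's representation to obtain $C^4_{\mathrm{loc}}$ convergence, $\hat u_l \to w$, a finite-mass solution of $\Delta^2 w = 6 e^{4w}$; by C.-S. Lin's classification such solutions are spherical with $\int_{\R^4} 6 e^{4w}\,dx = 16\pi^2$, the non-spherical ones being ruled out as blow-up limits by a Pohozaev identity. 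Thus each blow-up point carries $\mu$-mass a positive integer multiple of $8\pi^2/k_p$, and a neck analysis shows no mass escapes into the region between the bubbles and the rest of $M$; passing to the limit in $\int_M k_l e^{4u_l}\,dV = k_l$ then gives $k_p = 8N\pi^2$ for some positive integer $N$, contradicting the hypothesis $k_p \neq 8k\pi^2$. I expect this last step to be the main obstacle: the absence of a maximum principle for the fourth-order operator means the pointwise comparison arguments of the two-dimensional theory must be replaced by delicate integral estimates via the Green's function, and controlling the residual mass in the neck regions is the genuinely hard technical point (this is the core of \cite{M}).
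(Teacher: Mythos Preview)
Your outline is a faithful sketch of Malchiodi's original argument in \cite{M}, and as such it is correct. Note, however, that the paper does not reprove this theorem in the stated generality; it merely cites it. What the paper does prove is a special case (Theorem~\ref{theorem:compactness}) under the additional standing hypotheses $Y(M,[g])>0$, $k_p>0$, and --- most importantly --- the constraint $R_t>0$ built into the family \ref{eq:ls}. The route taken there is genuinely different from yours: instead of the Green's-function concentration-compactness alternative, the paper first obtains local $W^{4,2}$ bounds from the scalar-curvature inequality $\Delta w_t+|\nabla w_t|^2\le \tfrac{1}{6}R$, then blows up at a single maximum point to produce a limit $w_\infty$ on $\mathbb{R}^4$ solving $\Delta^2 w_\infty = 2t_\infty k_p e^{4w_\infty}$ with $R_{w_\infty}\ge 0$. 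The positive-scalar-curvature constraint survives the limit and triggers the Chang--Qing--Yang normality criterion (Theorem~\ref{theorem:normal}), after which Xu's classification (Theorem~\ref{theorem:classification}) immediately identifies $w_\infty$ as a standard bubble; a direct volume count then gives the contradiction $\int_{\mathbb{R}^4}e^{4w_\infty}=8\pi^2/(t_\infty k_p)>1$. The payoff is that the paper's argument bypasses precisely the hard steps you flagged --- no Brezis--Merle alternative, no Pohozaev identity to rule out non-spherical entire solutions, and no neck analysis for residual mass --- at the cost of only applying when one already knows the scalar curvature stays positive along the family. Your approach is more general but technically heavier; the paper's is lighter but needs the geometric constraint $R_t>0$ as an input.
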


Under our assumptions, we provide an alternative proof by utilizing the concept of a normal metric introduced in \cite{CQY}. 
\begin{definition}(\cite[Definition 3.1]{CQY})
\label{def:normal}
Let $(\mathbb{R}^4, g_0)$ be the standard Euclidean space. A conformal metric $e^{2w}g_0$ satisfying $\int_{\mathbb{R}^4} |Q_w| e^{4w} < \infty$ is defined to be normal if
\begin{equation}
w(x) = \frac{1}{4\pi^2} \int_{\mathbb{R}^4} \log{\frac{|y|}{|x-y|}} Q_w (y) e^{4w(y)} dy + C.
\end{equation}
\end{definition}
We will employ the following criterion to determine whether a metric is normal.
\begin{theorem}(\cite[Theorem 1.4]{CQY}) \label{theorem:normal}
Suppose that the $Q$-curvature of a metric $e^{2w}|dx|^2$ on $\mathbb{R}^4$ is absolutely integrable and suppose that its scalar curvature is nonnegative at infinity. Then the metric is normal.
\end{theorem}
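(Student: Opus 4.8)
\emph{Setup and reduction.} Write the metric as $g_w = e^{2w}|dx|^2$ on $\R^4$. Since on flat space the Paneitz operator is $\Delta^2$ and the background $Q$-curvature vanishes, $w$ solves the fourth order equation $\Delta^2 w = 2Q_w e^{4w}$, whose right-hand side lies in $L^1(\R^4)$ by hypothesis. The first step is to introduce the potential appearing in the definition of a normal metric,
\[
V(x) \coloneqq \frac{1}{4\pi^2}\int_{\R^4}\log\frac{|y|}{|x-y|}\,Q_w(y)e^{4w(y)}\,dy,
\]
which is finite for every $x$ because the kernel is locally integrable (near $y=0$ and $y=x$) and uniformly bounded for $|y|$ large, while $Q_we^{4w}\in L^1$. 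Since $\Delta^2_x\log\frac{1}{|x-y|} = 8\pi^2\delta_y$ on $\R^4$, one gets $\Delta^2 V = 2Q_we^{4w}$, so that $h\coloneqq w - V$ is biharmonic on $\R^4$ (and hence smooth). By definition $g_w$ is normal exactly when $h$ is constant, so the whole problem reduces to showing $h\equiv\text{const}$.

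\emph{The key idea: the curvature sign is a Caccioppoli inequality.} Under $g_w = e^{2w}|dx|^2$ the conformal change formula for scalar curvature in dimension four is $R_{g_w} = -6\,e^{-2w}\big(\Delta w + |\nabla w|^2\big)$. Hence the hypothesis "$R_{g_w}\ge 0$ outside a compact set" says precisely that
\[
|\nabla w|^2 \le -\Delta w \qquad \text{on } \{|x|\ge R_0\}
\]
for some $R_0>0$. Testing this inequality against $\phi^2$ with $\phi\in C_c^\infty(\{|x|>R_0\})$ and integrating by parts gives $\int\phi^2|\nabla w|^2 \le \int\nabla(\phi^2)\cdot\nabla w \le \tfrac12\int\phi^2|\nabla w|^2 + 2\int|\nabla\phi|^2$, i.e.\ $\int\phi^2|\nabla w|^2 \le 4\int|\nabla\phi|^2$. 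Taking $\phi\equiv 1$ on $\{2R_0\le|x|\le R\}$, supported in $\{R_0<|x|<2R\}$, with $|\nabla\phi|\lesssim R^{-1}$ on the outer transition region and $|\nabla\phi|\lesssim 1$ on the fixed inner one, I would obtain $\int_{B_R}|\nabla w|^2\le CR^2$ for all large $R$. Separately, $\nabla V$ is a Riesz-type potential of $Q_we^{4w}$, so $|\nabla V(x)|\lesssim\int |Q_w(y)|e^{4w(y)}/|x-y|\,dy$, and Cauchy–Schwarz together with the elementary uniform estimate $\int_{B_R}|x-y|^{-2}\,dx\le CR^2$ gives $\int_{B_R}|\nabla V|^2\le C\big(\int|Q_w|e^{4w}\big)^2 R^2$. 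Combining the two, the Dirichlet energy of the biharmonic function $h$ grows at most quadratically:
\[
\int_{B_R}|\nabla h|^2 \le CR^2 \qquad (R\ \text{large}).
\]

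\emph{Liouville step.} Each component $\partial_i h$ is biharmonic on $\R^4$, so by interior estimates for $\Delta^2$,
\[
|\partial_i h(x_0)| \le C|x_0|^{-2}\,\|\nabla h\|_{L^2(B_{|x_0|/2}(x_0))} \le C|x_0|^{-2}\big(C|x_0|^2\big)^{1/2} = C|x_0|^{-1},
\]
hence $\nabla h(x)\to 0$ as $|x|\to\infty$, and in particular $\nabla h$ is bounded on $\R^4$. A bounded biharmonic function on $\R^4$ is harmonic (interior estimates again: $|\Delta\partial_i h(x_0)|\le C\rho^{-2}\|\partial_i h\|_{L^\infty(B_\rho(x_0))}\to 0$ as $\rho\to\infty$), and a harmonic function tending to $0$ at infinity is identically $0$. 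Therefore $\nabla h\equiv 0$, so $h$ is constant, $w = V + \text{const}$, and $g_w$ is normal.

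\emph{Where the difficulty lies.} The only genuinely conceptual point is the second step: recognizing that the sign of the scalar curvature near infinity is, modulo the positive conformal factor, exactly a Caccioppoli inequality for $w$, and that this is what upgrades the bare $L^1$-integrability of $Q_we^{4w}$ into a quadratic Dirichlet-energy bound — precisely the borderline at which the biharmonic Liouville theorem applies. The remaining ingredients (finiteness of $V$ and the identity $\Delta^2 V = 2Q_we^{4w}$, the convolution estimates for $\nabla V$, interior regularity for $\Delta^2$) are standard; the only care needed is the bookkeeping for the cutoff, so that the fixed inner annulus contributes only $O(1)$ to $\int|\nabla\phi|^2$, and the regularity required to read $\Delta^2 w = 2Q_we^{4w}$ classically, which is automatic once the metric is $C^4$.
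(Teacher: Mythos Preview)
Your argument is correct. The paper does not itself prove this theorem (it is cited from \cite{CQY}), but it reproduces the \cite{CQY} method in Section~8 when proving the analogous normality lemma for the $T$-curvature blow-up limit, so one can compare against that. Both arguments set $h=w-V$ biharmonic and both extract from $R_{g_w}\ge 0$ the inequality $|\nabla w|^2\le -\Delta w$ near infinity; the difference is in how this is used. The \cite{CQY} route is \emph{pointwise via the mean value property}: since $\Delta h$ is harmonic, $\Delta h(x_0)=\fint_{\partial B_r(x_0)}\Delta h\le -\fint_{\partial B_r(x_0)}|\nabla w|^2-\fint_{\partial B_r(x_0)}\Delta V\le Cr^{-2}$, so $\Delta h\le 0$ and hence $\Delta h\equiv c_0$ by the one-sided Liouville theorem; then $|\nabla h|^2\le 2|\nabla w|^2+2|\nabla V|^2\le -2c_0-2\Delta V+2|\nabla V|^2$, and sphere-averaging again shows $\nabla h$ is bounded, so each $\partial_i h$ is bounded harmonic, hence constant. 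Your route is \emph{integral}: you test the scalar-curvature inequality against a cutoff (Caccioppoli) to get $\int_{B_R}|\nabla w|^2\le CR^2$, pair it with the convolution estimate $\int_{B_R}|\nabla V|^2\le CR^2$, and feed the resulting quadratic Dirichlet growth into biharmonic interior estimates to force $\nabla h\to 0$. The \cite{CQY} approach is a bit leaner (no cutoffs, no $L^2$ bookkeeping, and it isolates $\Delta h$ first), whereas yours is phrased in the same energy-estimate language used throughout the paper's compactness arguments and avoids the intermediate step of pinning down $\Delta h$.
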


A blow-up analysis shows that the limit conformal factor $w_\infty$ on $\mathbb{R}^4$ satiesfies $R_{w_\infty} \ge 0$. By Theorem \ref{theorem:normal}, we know that $w_\infty$ is normal. The proof for the compactness of solutions is concluded by the following blow-up profile classification of Xu \cite{X}. 

\begin{theorem}(\cite[Theorem 1.2]{X}) \label{theorem:classification}
Suppose $w(x) \in C^1(\mathbb{R}^m)$ is a solution of the equation
\begin{equation}\label{eq:class}
w(x) = \alpha_m \int_{\mathbb{R}^m} \big[\log{\big( \frac{|y|}{|x-y|} \big) } \big]e^{mw(y)}dy + C_0
\end{equation}
for a dimensional constant $\alpha_m$ such that $e^{mw(x)}$ is integrable over $\mathbb{R}^m$. Then there exist a positive constant $\lambda$ and a point $x_0 \in \mathbb{R}^m$ such that 
$$w(x) = \log{\big[\frac{2\lambda}{\lambda^2 + |x-x_0|^2} \big]}.$$
\end{theorem}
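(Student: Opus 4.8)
\noindent\textbf{Proof strategy for Theorem~\ref{theorem:classification}.}
The plan is to classify the solutions of \eqref{eq:class} by the method of moving planes in the integral form of Chen--Li--Ou, after first using a Pohozaev-type identity to pin down the total mass. Write $\beta \coloneqq \alpha_m\int_{\mathbb{R}^m}e^{mw(y)}\,dy$, which is finite by hypothesis and positive since $\alpha_m>0$. The first task is the sharp asymptotics: bootstrapping in \eqref{eq:class} from the integrability of $e^{mw}$, one obtains $w(x) = -\beta\log|x| + C_1 + o(1)$ and $\nabla w(x) = -\beta x/|x|^2 + o(|x|^{-1})$ as $|x|\to\infty$ for some constant $C_1$, the point being that $\log\frac{|x|\,|y|}{|x-y|}\to\log|y|$ pointwise with a uniform integrable majorant, so $w(x)+\beta\log|x|\to\alpha_m\int\log|y|\,e^{mw(y)}\,dy+C_0$. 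In particular $e^{mw(x)}\sim e^{mC_1}|x|^{-m\beta}$, so integrability at infinity forces $\beta>1$.

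Next, differentiating \eqref{eq:class}, testing against $x\cdot\nabla w$, integrating over a ball $B_R$ and letting $R\to\infty$ while using the asymptotics to control the boundary terms, one arrives at an identity which, together with $\beta>1$, forces $\beta=2$; equivalently $\int_{\mathbb{R}^m}e^{mw}$ equals the volume of the round $\mathbb{S}^m$. This is where the \emph{integral} formulation is essential: it makes the solution ``normal,'' and among normal solutions the value $\beta=2$ excludes the non-spherical finite-mass solutions that the associated higher-order PDE (for even $m$) would otherwise admit.

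For the symmetry, fix a unit vector $e$, let $\Sigma_\lambda$ be a half-space bounded by $\{x\cdot e=\lambda\}$, let $x^\lambda$ denote the reflection of $x$ and $w_\lambda(x)=w(x^\lambda)$. Subtracting \eqref{eq:class} at $x$ and at $x^\lambda$ and folding the integral over $\Sigma_\lambda$ gives, for $x\in\Sigma_\lambda$,
\[
w(x)-w_\lambda(x) = \alpha_m\int_{\Sigma_\lambda}\log\frac{|x^\lambda-y|}{|x-y|}\,\bigl(e^{mw(y)}-e^{mw_\lambda(y)}\bigr)\,dy ,
\]
and the kernel is positive since $|x^\lambda-y|^2-|x-y|^2 = 4(\lambda-x\cdot e)(\lambda-y\cdot e)>0$ for $x,y\in\Sigma_\lambda$. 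Combining this identity with the mean value bound $e^{mw}-e^{mw_\lambda}\le m e^{mw}(w-w_\lambda)$ on $\Sigma_\lambda^-\coloneqq\{w>w_\lambda\}\cap\Sigma_\lambda$ and an $L^p$ estimate (H\"older together with the smallness of $\|e^{mw}\|_{L^q(\Sigma_\lambda^-)}$ guaranteed by the decay above), one shows $\Sigma_\lambda^-$ is empty for $\lambda$ in the extreme position and stays empty as $\lambda$ increases, until the plane stops with $w\equiv w_{\lambda_0}$; the plane cannot run off to infinity, for that would make $w$ monotone along a line, contradicting $w\to-\infty$ at both ends. Running this in every direction shows $w$ is radially symmetric about a point $x_0$ and strictly decreasing in $|x-x_0|$.

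To finish, normalize $x_0=0$; since $\beta=2$, the Kelvin transform $w^*(x)\coloneqq w(x/|x|^2)-2\log|x|$ is again a $C^1$ solution of \eqref{eq:class}, with the same total mass and, by the asymptotics, bounded near the origin. One then either runs the method of moving spheres (Kelvin transforms centered at arbitrary points) and applies a Li--Zhang calculus lemma, which yields directly $w(x)=\log\bigl[2\lambda/(\lambda^2+|x-x_0|^2)\bigr]$; or one observes that \eqref{eq:class} makes $e^{2w}|dx|^2$ a metric of constant $Q$-curvature, transfers it by stereographic projection to a metric on $\mathbb{S}^m$ conformal to the round one with constant $Q$-curvature, and invokes the rigidity of such metrics (the M\"obius orbit of the round metric). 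I expect the main obstacle to be the moving-plane step together with the sharp constant of the Pohozaev step: closing the narrow-region integral estimates for the sign-indefinite $\log$-kernel with only the weak decay available, and extracting exactly $\beta=2$ --- which is precisely where the integral equation, rather than the PDE, does the essential work.
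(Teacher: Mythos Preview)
The paper does not prove Theorem~\ref{theorem:classification}; it is quoted verbatim from Xu~\cite{X} and only \emph{applied} in the proofs of Theorems~\ref{theorem:compactness} and~\ref{thm:b2compactness}. So there is no ``paper's own proof'' to compare your proposal against.

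That said, your outline is essentially the strategy of the original reference (and of the closely related works of Lin~\cite{L} and Chen--Li--Ou): deduce the sharp logarithmic asymptotics $w(x)=-\beta\log|x|+O(1)$ from the integral representation, use a Pohozaev-type computation to pin down $\beta=2$, run the method of moving planes in integral form with the positive kernel $\log(|x^\lambda-y|/|x-y|)$ to obtain radial symmetry about some $x_0$, and then identify the radial profile. One remark: in your sketch you write ``integrability at infinity forces $\beta>1$'', which is a slip --- integrability of $|x|^{-m\beta}$ near infinity in $\mathbb{R}^m$ is exactly $\beta>1$, so this is fine, but the subsequent Pohozaev step is the delicate part, and your description of it (``testing against $x\cdot\nabla w$'') is more schematic than the actual argument in~\cite{X}, which works directly with the integral equation rather than a differential identity. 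The final identification of the radial profile is also not quite as you describe: once $\beta=2$ and radial symmetry are known, the ODE for the radial function (or the moving-spheres refinement) gives the explicit form without needing to transfer to $\mathbb{S}^m$ and invoke a separate rigidity theorem. But as a roadmap your proposal is sound and in line with the cited source.
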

Note that the dimensional constant $\alpha_m$ is explicitly computed by substituting the specific solution defined above into the equation (\ref{eq:class}). It is easy to see that $\alpha_m$ is independent of $\lambda$ and $x_0$.

In Section 5, we provide a proof of Theorem 1.1 by using the Leray-Schauder degree theory. We also discuss an example that illustrates a limitation of the method of continuity for the 1-parameter family of equations studied in Section 4.

\bigskip

In Part 2, we investigate compact manifolds with totally geodesic boundary. Since the method is similar to that of closed manifolds, we will briefly outline each section's contents.

The first step is to prove that we can conformally deform the metric so that the curvatures satisfy certain positivity conditions. In Section 6, we review functionals that appeared in \cite{CQ1} and \cite{CQ2} when computing explicit formulas for functional determinants of conformally covariant operators in 4-dimensional Riemannian manifoldswith boundary. Suitable linear combinations of these functionals are used for the method of continuity, similar to Section 2. The existence of minimizers for the starting functional and the closedness of critical points are established in Section 6. In Section 7, we prove the openness part of the continuity method and conclude the proof of the first step.

 In Section 8, we establish the compactness result for 1-parameter families of equations which are Neumann boundary valued versions of (1.9). These compactness results are essential to apply the Lerau-Schauder degree theory. Analogous to the closed manifold case, we will also show that the metrics in these boundary value problems are "normal" in the sense of Definition \ref{def:normal} and use the classification result of Theorem \ref{theorem:classification}. Finally, in Section 9, we provide a proof of Theorem 1.2.

\section*{Acknowledgement}
The author acknowledges the invaluable guidance and support of his advisor Professor Sun-Yung A. Chang throughout the research process. The author also expresses his gratitude to Professor Paul C. Yang for providing helpful insights and engaging in enlightening discussions.

\part{Closed manifolds}

\section{Preliminaries}

Throughout Section 2 and 3, we study critical points of linear combinations of some functionals intoduced in \cite{CY}, \cite{CGY}.

 We assume $k_p = \int_M Q_0 dV_0>0$ and $Y(M, [g_0])>0$. By \cite[Theorem B]{G2}, we have $k_p \le 8\pi^2$, and if $k_p = 8\pi^2$, then $M$ is conformally equivalent to the standard sphere $S^4$. Hence, we may assume that $k_p < 8\pi^2$. We fix the background metric to be a Yamabe metric so that $R_0$ is constant.

Let $\eta$ be a fixed (2,0) tensor defined on $M$ with $|\eta|_0^2>0$ everywhere. We define some functionals as follows:

\begin{align*}
I[w] & = \int 4|\eta|_0^2 wdV_0 - (\int |\eta|_0^2 dV_0) \log{\fint e^{4w} dV_0}\\
II[w] & = \int w (P_4)_0 w dV_0 + \int 4Q_0 wdV_0 -k_p\log{\fint e^{4w} dV_0} \\
& = \int \big[(\Delta_0 w)^2 + \frac{2}{3}R_0|\nabla_0 w|^2 - 2Ric_0(\nabla_0 w, \nabla_0 w) + 4Q_0 w \big] dV_0-k_p\log{\fint e^{4w}dV_0 } \\
III[w] & = \frac{1}{36}(\int R_w^2 dV_w -\int R_0^2 dV_0)  = \int \big[(\Delta_0w + |\nabla_0 w|^2)^2 - \frac{1}{3}R_0 | \nabla_0 w|^2 \big]dV_0
\end{align*}

We consider a one-parameter of family of functionals 
\begin{equation}
\label{eq:functional}
F_t[w] \coloneqq\gamma(t) I[w] + t II[w] + (1-t) III[w]
\end{equation}
where $\gamma(t) = \frac{-tk_p}{\int |\eta|_0^2 dv_0}$. The Euler-Lagrange equation for a critical point $w_t$ and the corresponding metric $g_t \coloneqq e^{2w_t} g_0$ of this functional is as follows:

\begin{equation} \label{eq:el}\tag*{($\ast$)$_{t}$}
\gamma(t) |\eta|_t^2 + tQ_t + \frac{(1-t)}{12}(-\Delta_t R_t ) = 0
\end{equation}
and the weak formulation is as follows; see \cite[Lemma 3.2]{CGY}.
\begin{multline} \label{eq:weak}
\int \Delta_0 w_t \Delta_0 \phi + (1-t)\big[2(\nabla_0 w_t \cdot \nabla_0 \phi)(\Delta_0 w_t + |\nabla_0 w_t|^2 ) + |\nabla_0 w_t|^2 \Delta_0 \phi \big]\\
= \int \big[(\frac{1}{3}-t) R_0g_0 + 2t Ric_0\big](\nabla_0 w_t, \nabla_0 \phi) - \int (2tQ_0 +2 \gamma(t) |\eta|^2_0)\phi
\end{multline}
for $\phi \in W^{2,2}(M)$.

We will vary the parameter $t$ from $\frac{2}{3}$ to $1$. The reason for choosing $\frac{2}{3}$ as the starting point is motivated by the following lemma.

\begin{lemma}\cite[Lemma 1.2]{G1} \label{lemma:g1}
Let $(M, g)$ be a four-dimensional compact Riemannian manifold. Suppose that the scalar curvature $R$ of $g$ satisfies
\begin{equation}
\Delta R - \frac{1}{6} R^2 \le 0.
\end{equation}
Then

(1) if $Y(M, [g])>0$, $R>0$ on $M$.

(2) if $Y(M, [g]) = 0 $, $R \equiv 0$ on $M$.

\end{lemma}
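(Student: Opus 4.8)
The plan is to treat this as a maximum-principle argument for the scalar curvature, exploiting the fact that under a conformal change the scalar curvature satisfies a second-order elliptic equation. First I would recall the conformal transformation law in dimension four: writing $g = e^{2w}g_{\mathrm{eucl-free}}$... more precisely, for any background metric the scalar curvature transforms by a Yamabe-type equation, but here it is cleaner to work intrinsically with $g$ itself. I would set $R = R_g$ and observe that the hypothesis $\Delta R - \frac{1}{6}R^2 \le 0$ is a differential inequality of the form $\Delta R \le c(x) R$ with $c = \frac{1}{6}R$ (not a priori of fixed sign), so the naive strong maximum principle does not immediately apply. The standard device is instead to consider the conformal Laplacian $L_g = -\Delta_g + \frac{1}{6}R_g$ (the operator whose first eigenvalue has the same sign as $Y(M,[g])$) and to use the sign of its principal eigenvalue.

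The key steps, in order, are as follows. Let $\lambda_1$ denote the first eigenvalue of $L_g = -\Delta_g + \frac16 R_g$, with positive eigenfunction $\varphi > 0$, so $-\Delta_g \varphi + \frac16 R_g \varphi = \lambda_1 \varphi$. It is classical (Kazdan--Warner) that $\operatorname{sign}\lambda_1 = \operatorname{sign} Y(M,[g])$; in case (1) $\lambda_1 > 0$ and in case (2) $\lambda_1 = 0$. Now I would compute $L_g$ applied to $R$ itself, or rather compare $R$ to a multiple of $\varphi$. Consider the function $u = R/\varphi$ and let $p$ be a point where $u$ attains its minimum. At such a point $\nabla u = 0$ and $\Delta_g u \ge 0$, and from $R = u\varphi$ one gets $\Delta_g R = \varphi \Delta_g u + 2\nabla u \cdot \nabla \varphi + u \Delta_g \varphi$; evaluating at $p$ yields $\Delta_g R(p) \ge u(p)\Delta_g\varphi(p) = u(p)\big(\tfrac16 R(p)\varphi(p) - \lambda_1 \varphi(p)\big) = \tfrac16 R(p)^2 - \lambda_1 R(p)$. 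Combining with the hypothesis $\Delta_g R(p) \le \tfrac16 R(p)^2$ gives $-\lambda_1 R(p) \le 0$ at the minimum point, i.e. $\lambda_1 R(p) \ge 0$.

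From here the two cases are immediate. In case (1), $\lambda_1 > 0$, so $R(p) \ge 0$, hence $R \ge u(p)\varphi \ge 0$ everywhere — wait, more carefully: $R(p) \ge 0$ means $u(p) = R(p)/\varphi(p) \ge 0$, so $R = u\varphi \ge u(p)\varphi \ge 0$ on all of $M$. To upgrade $R \ge 0$ to $R > 0$ I would return to the original inequality $\Delta_g R \le \frac16 R^2$: if $R(q) = 0$ at some point, then at a minimum where $R = 0$ the strong maximum principle applied to $\Delta_g R - \frac16 R R \le 0$ (now legitimately of the form $\Delta_g R \le (\text{bounded})\cdot R$ since $R$ is continuous hence $\frac16 R$ is a bounded coefficient) forces $R \equiv 0$, contradicting $\int_M R\, dV > 0$ which follows from $Y(M,[g]) > 0$. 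Hence $R > 0$. In case (2), $\lambda_1 = 0$, so the inequality $\lambda_1 R(p) \ge 0$ gives no information directly; instead I would use that $\lambda_1 = 0$ means $\varphi$ itself (with the Yamabe metric in the conformal class having $R \equiv 0$, or equivalently $L_g\varphi = 0$) and revisit the computation: the argument shows $\Delta_g u(p) \le 0$ combined with $\Delta_g u(p) \ge 0$, forcing (via the strong maximum principle on $u$, since $u$ satisfies $\varphi \Delta_g u + 2\nabla u\cdot\nabla\varphi = \Delta_g R - \frac16 R^2 \le 0$, an elliptic inequality with no zeroth-order term) that $u$ is constant, so $R = c\varphi$ for a constant $c$; then $\Delta_g R = c\Delta_g\varphi = \frac16 c R\cdot(\text{...})$ — plugging back, $c \le 0$ must hold but also by the same argument at the maximum $c \ge 0$, so $c = 0$ and $R \equiv 0$.

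The main obstacle is the first case's sign bookkeeping: the inequality $\Delta_g R \le \frac16 R^2$ has a zeroth-order coefficient $\frac16 R$ of uncertain sign, which blocks a direct application of the strong maximum principle to conclude $R > 0$ from $R \ge 0$ without first knowing $R \ge 0$. The trick of dividing by the principal eigenfunction $\varphi$ of the conformal Laplacian converts the problem into one where the relevant coefficient is controlled by $\lambda_1$, whose sign is exactly the Yamabe sign; this is the conceptual heart of the proof, and identifying $\operatorname{sign}\lambda_1(L_g) = \operatorname{sign} Y(M,[g])$ (a result of Kazdan--Warner, or seen directly by testing the Yamabe functional) is the key input. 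The remaining steps — the maximum-principle computation at $p$ and the strong-maximum-principle upgrade — are routine once this reduction is in place.
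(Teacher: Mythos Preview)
The paper does not prove this lemma; it is quoted from Gursky \cite{G1}. Your strategy---dividing by the positive principal eigenfunction $\varphi$ of the conformal Laplacian $L_g = -\Delta_g + \tfrac{1}{6}R_g$ and applying the maximum principle to $u = R/\varphi$---is precisely the standard argument, and it is the same technique the paper itself deploys later (in the proof of Lemma~\ref{lemma:bstarting}) for the boundary analogue. So the overall approach is correct and matches what the paper relies on.

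Case (1) is fine. One small wording issue: the claim ``$\int_M R\, dV > 0$ follows from $Y(M,[g]) > 0$'' is not valid for an arbitrary metric in the conformal class, but at this point you have already shown $R \ge 0$, so $R \equiv 0$ is the only obstruction, and $R \equiv 0$ would give $\lambda_1(L_g) = \lambda_1(-\Delta_g) = 0$, contradicting $\lambda_1 > 0$. The conclusion is therefore correct.

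Case (2) has a small but genuine gap at the last step. Your identity $\varphi \Delta_g u + 2\nabla\varphi \cdot \nabla u = \Delta_g R - \tfrac{1}{6}R^2 \le 0$ (valid everywhere when $\lambda_1 = 0$, since then $u\Delta_g\varphi = \tfrac16 R^2$) is an elliptic inequality with no zeroth-order term, and the strong maximum principle on the closed manifold correctly gives $u \equiv c$, i.e.\ $R = c\varphi$. However, your assertion ``by the same argument at the maximum $c \ge 0$'' does not go through: once $u$ is constant there is nothing left to extract from the maximum, and plugging $R = c\varphi$ back into the hypothesis yields equality $\Delta_g R = \tfrac16 R^2$ for every value of $c$, not a sign constraint. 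The fix is immediate: the eigenfunction equation with $\lambda_1 = 0$ reads $\Delta_g \varphi = \tfrac{1}{6}R\varphi = \tfrac{c}{6}\varphi^2$, and integrating over $M$ gives $0 = \tfrac{c}{6}\int_M \varphi^2$, hence $c = 0$. Alternatively, $c \ne 0$ would make $R$ of a strict sign and force $\lambda_1(L_g)$ to share that sign, contradicting $\lambda_1 = 0$.
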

By using the above lemma we can prove that for any critical point of $F_{\frac{2}{3}}$, the deformed metric has positive scalar curvature.
Let $t_0 =\frac{2}{3}$.

\begin{lemma}\label{lemma:starting}
There exsits a smooth minimizer $w_{t_0}$ of $F_{t_0}$. For $w_{t_0}$,  $R_{t_0}$ is positive.
\end{lemma}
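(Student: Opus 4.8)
The plan is to obtain $w_{t_0}$ by the direct method, minimizing $F_{t_0}$ over the closed subspace $\mathcal{W}_0 := \{\, w \in W^{2,2}(M) : \fint w\,dV_0 = 0 \,\}$. This costs nothing: each of $I$, $II$, $III$ — hence $F_{t_0}$ — is invariant under $w \mapsto w+c$ ($c \in \R$), since $(P_4)_0$ annihilates constants and $\fint e^{4(w+c)}\,dV_0 = e^{4c}\fint e^{4w}\,dV_0$ make the linear and logarithmic terms of $I$ and $II$ cancel, while $III$ depends only on $\nabla_0 w$. So a minimizer over $\mathcal{W}_0$ is automatically a minimizer over all of $W^{2,2}(M)$. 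It then suffices to prove that $F_{t_0}$ is coercive and weakly lower semicontinuous on $\mathcal{W}_0$, extract a $W^{2,2}$-minimizer by the usual compactness argument, upgrade it to a smooth critical point, and read off the sign of $R_{t_0}$ from the Euler--Lagrange equation.

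Coercivity rests on three inputs. \emph{(i)} Since $Y(M,[g_0])>0$ and $k_p>0$, Gursky's theorem \cite{G2} gives $(P_4)_0 \ge 0$ with $\ker (P_4)_0 = \R$, whence $\int_M w\,(P_4)_0 w\,dV_0 \ge c_1\|w\|_{W^{2,2}}^2$ for $w \in \mathcal{W}_0$ (a standard compactness/contradiction argument, using that $\int_M (\Delta_0 w)^2$ controls the lower-order terms of $(P_4)_0$ modulo $\|w\|_{L^2}$, which in turn is controlled on $\mathcal{W}_0$). \emph{(ii)} On $\mathcal{W}_0$ the sharp Adams--Beckner inequality reads $\log \fint e^{4w}\,dV_0 \le \tfrac{1}{8\pi^2}\int_M w\,(P_4)_0 w\,dV_0 + C$, and Jensen's inequality gives $\log \fint e^{4w}\,dV_0 \ge 0$. \emph{(iii)} $III[w] = \tfrac{1}{36}\big(\int_M R_w^2\,dV_w - \int_M R_0^2\,dV_0\big) \ge -\tfrac{1}{36}\int_M R_0^2\,dV_0$, a fixed constant, because $\int_M R_w^2\,dV_w \ge 0$. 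Now, in $t_0\,II[w] = \tfrac{2}{3}\int_M w\,(P_4)_0 w + \tfrac{8}{3}\int_M Q_0 w - \tfrac{2}{3}k_p \log\fint e^{4w}$, bounding the logarithmic term below via (ii) leaves the coefficient $\tfrac{2}{3}\big(1 - \tfrac{k_p}{8\pi^2}\big) = \tfrac{8\pi^2 - k_p}{12\pi^2}$ in front of $\int_M w\,(P_4)_0 w$, which is strictly positive exactly because $k_p < 8\pi^2$; in $\gamma(t_0)\,I[w] = 4\gamma(t_0)\int_M |\eta|_0^2 w + t_0 k_p \log\fint e^{4w}$ the logarithmic term is $\ge 0$ on $\mathcal{W}_0$ and the linear term is $\ge -C\|w\|_{W^{2,2}}$; finally $\tfrac{8}{3}\int_M Q_0 w \ge -C\|w\|_{W^{2,2}}$. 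Combining these with (i) and (iii),
\begin{equation*}
F_{t_0}[w] \;\ge\; \frac{(8\pi^2 - k_p)\,c_1}{12\pi^2}\,\|w\|_{W^{2,2}}^2 \;-\; C\,\|w\|_{W^{2,2}} \;-\; C ,
\end{equation*}
so $F_{t_0}$ is coercive on $\mathcal{W}_0$. Weak lower semicontinuity is routine: along a bounded sequence in $W^{2,2}(M^4)$, (ii) keeps $e^{4w_k}$ bounded in some $L^q$ with $q > 1$, so the logarithmic terms are weakly continuous; the gradient terms converge strongly (the embedding $W^{2,2}(M) \hookrightarrow W^{1,2}(M)$ is compact and $W^{2,2}(M) \hookrightarrow W^{1,4}(M)$); and $\int_M (\Delta_0 w)^2$, $\int_M (\Delta_0 w + |\nabla_0 w|^2)^2$ are weakly lower semicontinuous. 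Hence a minimizer $w_{t_0} \in \mathcal{W}_0$ exists; it satisfies the weak Euler--Lagrange equation \eqref{eq:weak}, and a standard bootstrap for this quasilinear fourth-order equation, starting from $w_{t_0} \in \bigcap_{p<\infty} L^p(M)$ (as in \cite{CGY}), shows $w_{t_0} \in C^\infty(M)$, so that $(\ast)_{t_0}$ holds pointwise.

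For the positivity of $R_{t_0}$, substitute $Q_{t_0} = \tfrac{1}{12}\big(-\Delta_{t_0} R_{t_0} + R_{t_0}^2 - 3|Ric_{t_0}|^2\big)$ into $(\ast)_{t_0}$ with $t_0 = \tfrac{2}{3}$; the two $\Delta_{t_0} R_{t_0}$-contributions combine, leaving
\begin{equation*}
\Delta_{t_0} R_{t_0} \;=\; 12\,\gamma(t_0)\,|\eta|_{t_0}^2 \;+\; \tfrac{2}{3}\,R_{t_0}^2 \;-\; 2\,|Ric_{t_0}|^2 .
\end{equation*}
Writing $Ric_{t_0} = E_{t_0} + \tfrac{R_{t_0}}{4}g_{t_0}$ with $E_{t_0}$ trace-free, so that $|Ric_{t_0}|^2 = |E_{t_0}|^2 + \tfrac{1}{4}R_{t_0}^2$, the $R_{t_0}^2$-terms cancel in $\Delta_{t_0} R_{t_0} - \tfrac{1}{6}R_{t_0}^2$ and
\begin{equation*}
\Delta_{t_0} R_{t_0} - \tfrac{1}{6}R_{t_0}^2 \;=\; 12\,\gamma(t_0)\,|\eta|_{t_0}^2 \;-\; 2\,|E_{t_0}|^2 \;\le\; 12\,\gamma(t_0)\,|\eta|_{t_0}^2 \;<\; 0 ,
\end{equation*}
using $\gamma(t_0) = -\,\tfrac{t_0 k_p}{\int_M |\eta|_0^2\,dV_0} < 0$ and $|\eta|_{t_0}^2 > 0$ pointwise (it differs from $|\eta|_0^2 > 0$ by a positive conformal factor). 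Since the Yamabe constant is a conformal invariant, $Y(M,[g_{t_0}]) = Y(M,[g_0]) > 0$; hence Lemma \ref{lemma:g1}(1) applies to $g_{t_0}$ and gives $R_{t_0} > 0$ on $M$.

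The crux is the coercivity estimate, which is where the hypotheses are used to the hilt: the strict inequality $k_p < 8\pi^2$ paired with the sharp constant $\tfrac{1}{8\pi^2}$ in the Adams inequality keeps the quadratic part of $F_{t_0}$ positive-definite, and Gursky's non-negativity of $(P_4)_0$ (with kernel $\R$) turns this into genuine $W^{2,2}$-coercivity on $\mathcal{W}_0$; without strictness the estimate collapses. A secondary technical point is the interior regularity of the $W^{2,2}$-minimizer, since $F_{t_0}$ is quasilinear (the functional $III$ introduces $|\nabla_0 w|^2$-nonlinearities); this is handled by the bootstrap arguments of \cite{CGY}. By contrast, once a smooth critical point is available the positivity of $R_{t_0}$ comes almost for free, and the choice $t_0 = \tfrac{2}{3}$ is dictated precisely by the requirement that the $R^2$-terms in $\Delta_{t_0} R_{t_0} - \tfrac{1}{6}R_{t_0}^2$ cancel, so that only the sign-definite quantities $12\,\gamma(t_0)|\eta|_{t_0}^2 < 0$ and $-2|E_{t_0}|^2 \le 0$ survive, making Lemma \ref{lemma:g1} applicable.
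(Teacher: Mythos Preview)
Your proof is correct and follows the same overall strategy as the paper --- direct method for existence, then Lemma~\ref{lemma:g1} for the sign of $R_{t_0}$ --- but your coercivity argument is substantially more elaborate than it needs to be. The paper observes that the choice $\gamma(t) = -\,tk_p / \int_M |\eta|_0^2\,dV_0$ is made precisely so that the two logarithmic terms $-\gamma(t)\big(\int_M |\eta|_0^2\big)\log\fint e^{4w}$ and $-tk_p\log\fint e^{4w}$ in $\gamma(t)\,I[w] + t\,II[w]$ cancel exactly; there is no exponential nonlinearity in $F_t$ at all, and hence no need for the Adams--Moser--Trudinger inequality or Jensen. With this cancellation, the only nontrivial ingredients for boundedness below are $(P_4)_0 \ge 0$ with one-dimensional kernel (Gursky) and $III[w] \ge 0$, the latter obtained in the paper from the Yamabe-minimizing property of $g_0$ rather than the crude bound $\int_M R_w^2\,dV_w \ge 0$ you use. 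Your route via the sharp constant $8\pi^2$ and the strict inequality $k_p < 8\pi^2$ does work, but it obscures the reason $\gamma(t)$ was chosen as it was; once the cancellation is noticed, coercivity is almost immediate and the weak lower semicontinuity argument also simplifies (no exponential integrability to verify). For regularity the paper cites Uhlenbeck--Viaclovsky \cite{UV} directly rather than the bootstrap in \cite{CGY}. Your computation for $R_{t_0} > 0$ matches the paper's exactly.
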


\begin{proof}
By \cite[Theorem A]{G2}, it is known that $(P_4)_0$ is non-negative operator on $W^{2,2}(M)$ which also implies that $\int_M w(P_4)_0 w \ge C ||w-\fint_M w||_{W^{2,2}(M)}$ for some constant $C$.
Additionally, $\fint R_w^2dV_w \ge (\fint R_w dV_w)^2 \ge (\fint R_0 dV_0)^2 = \fint R_0^2 dV_0$, where we used the fact that the Yamabe metric minimizes the total scalar curvature when the volume is fixed. Note that there is no $\int_M e^{4w} dV_0$ term in the functional $F_{t_0}$ due to our choice of $\gamma(t_0)$. Therefore $F_{t_0}$ is bounded below and coercive, and we can find a minimizer $w_{t_0}$ with the normalization $\int w_{t_0} dV_0 = 0$. $w_{t_0}$ is smooth by \cite{UV}.

From the equation \ref{eq:el}, we have 
\begin{equation*}
-\Delta_{t_0} R_{t_0} + \frac{1}{6} R_{t_0}^2 = -12\gamma(\frac{2}{3}) |\eta|_{t_0}^2 + 2|E_{t_0}|^2 >0. 
\end{equation*}
where $E_{t_0}$ is the traceless part of the $Ric_{t_0}$.
By Lemma \ref{lemma:g1}, $R_{t_0}$ is positive.

\end{proof}

Next, we observe that any critical point $w_t$ of $F_t$  satisfies $||w_t||_{W^{2,2}} < C$ for some constant independent of $t$ for $\frac{2}{3} \le t \le 1$. This fact will be used to prove closedness part when we apply the method of continuity in Section 3.
\begin{proposition}\label{lemma:weak}
Let $w_t$ be a critical point of the functional $F_t$ with normalization $\overline{w}_t \coloneqq \fint w_t dV_0 =0$  for  $\frac{2}{3} \le t \le 1$. Assume $R_t>0$. Then, $||w_t||_{W^{2,2}} <C$ where $C$ is a constant independent of $t$.
\end{proposition}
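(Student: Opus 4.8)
The plan is to use the weak formulation \ref{eq:weak} tested against $\phi = w_t$ to obtain an a priori bound on $\|\nabla_0 w_t\|_{L^2}$ and $\|\Delta_0 w_t\|_{L^2}$, exploiting the positivity of the Paneitz operator $(P_4)_0$ established in \cite{G2}. Setting $\phi = w_t$ in \ref{eq:weak} gives
\begin{equation*}
\int (\Delta_0 w_t)^2 + (1-t)\int \big[2|\nabla_0 w_t|^2(\Delta_0 w_t + |\nabla_0 w_t|^2) + |\nabla_0 w_t|^2 \Delta_0 w_t\big] = \int \big[(\tfrac{1}{3}-t)R_0 g_0 + 2t\,Ric_0\big](\nabla_0 w_t,\nabla_0 w_t) - \int (2tQ_0 + 2\gamma(t)|\eta|_0^2) w_t.
\end{equation*}
The cubic and quartic gradient terms on the left should be reorganized: since $\int 2|\nabla_0 w_t|^2 \Delta_0 w_t = -\int \nabla_0(|\nabla_0 w_t|^2)\cdot\nabla_0 w_t = -2\int |\nabla_0 w_t|^2 \Delta_0 w_t$ after a further integration by parts (or more directly, $\int |\nabla_0 w_t|^2 \Delta_0 w_t \,dV_0$ can be controlled), the troublesome terms combine, after integrating by parts, into a manifestly nonnegative quantity of the form $\int (\Delta_0 w_t + |\nabla_0 w_t|^2)^2$ minus lower-order pieces — this is exactly the structure appearing in the functional $III[w]$, so the computation is the one already implicit in the formula for $III$. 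Concretely, one recognizes the left side as $36\,III[w_t] + \int (\Delta_0 w_t)^2 + (\text{terms involving } R_0|\nabla_0 w_t|^2)$ up to the factor $(1-t)$, and $III[w_t]\ge 0$ is immediate once $R_{t}>0$ is used (or from the argument in Lemma \ref{lemma:starting}).

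Next I would absorb the right-hand side. The curvature term $\int[(\tfrac13-t)R_0 g_0 + 2t\,Ric_0](\nabla_0 w_t,\nabla_0 w_t)$ is bounded by $C\|\nabla_0 w_t\|_{L^2}^2$ since $R_0$ is constant and $Ric_0$ is a fixed bounded tensor, with $C$ depending only on $g_0$ and on $t\in[\tfrac23,1]$ (a compact parameter range, so $C$ is uniform). The linear term $\int(2tQ_0 + 2\gamma(t)|\eta|_0^2)w_t$ is bounded, using the normalization $\overline{w}_t = 0$ and the \Poincare\ inequality, by $C\|\nabla_0 w_t\|_{L^2}$. Since $(P_4)_0$ is nonnegative and has kernel exactly the constants, $\int w_t (P_4)_0 w_t \ge C_0\|w_t - \overline w_t\|_{W^{2,2}}^2 = C_0\|w_t\|_{W^{2,2}}^2$ (using $\overline w_t = 0$); combining this coercivity with the bound on $III$ and the at-most-linear-and-quadratic growth of the right-hand side in $\|\nabla_0 w_t\|_{L^2}\le \|w_t\|_{W^{2,2}}$, a standard Young's-inequality absorption yields $\|w_t\|_{W^{2,2}}^2 \le C + \tfrac12\|w_t\|_{W^{2,2}}^2$, hence the uniform bound. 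It is essential here that the quadratic curvature term has coefficient strictly smaller than the coercivity constant, or else that one first runs the $III[w_t]\ge 0$ argument to eat it; I expect this is where the hypothesis $R_t > 0$ (which controls $III[w_t]$ from below, cf. its last displayed formula $III[w]=\int[(\Delta_0 w + |\nabla_0 w|^2)^2 - \tfrac13 R_0|\nabla_0 w|^2]$ — wait, one actually wants $III[w_t]$ bounded \emph{above}, so instead use $III[w_t] = \tfrac1{36}(\int R_{w_t}^2 dV_{w_t} - \int R_0^2 dV_0)$ together with an a priori bound on $\int R_{w_t}^2$ coming from equation \ref{eq:el}) enters decisively.

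The main obstacle is controlling the nonlinear gradient terms — the cubic $\int |\nabla_0 w_t|^2 \Delta_0 w_t$ and quartic $\int |\nabla_0 w_t|^4$ — which are not sign-definite individually and are not obviously dominated by the quadratic form $\int w_t (P_4)_0 w_t$. The resolution is to keep them packaged as $(1-t)\cdot 36\,III[w_t]$ and bound $III[w_t]$ using the pointwise equation \ref{eq:el}: from $\gamma(t)|\eta|_t^2 + tQ_t + \tfrac{1-t}{12}(-\Delta_t R_t) = 0$ one derives, as in Lemma \ref{lemma:starting}, that $R_{t}$ satisfies a differential inequality of the form $-\Delta_t R_t + \tfrac16 R_t^2 = (\text{nonneg terms involving } |\eta|_t^2, |E_t|^2)$ scaled by $(1-t)^{-1}$; integrating and using $R_t>0$ plus the Gauss–Bonnet/conformal-invariant control on $\int |E_t|^2$ (equivalently, on $\int Q_t e^{4w_t} = k_p$ and the Weyl term) gives $\int R_{w_t}^2 dV_{w_t} \le C$, hence $III[w_t]\le C$. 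Feeding this back closes the estimate. A secondary technical point is to ensure all constants are uniform in $t\in[\tfrac23,1]$, which is automatic since $\gamma(t)$, the coefficients, and $(1-t)^{-1}$ — except the last blows up as $t\to1$; near $t=1$ one instead argues directly that the cubic/quartic terms vanish to order $(1-t)$ and are absorbed trivially, while the dominant balance $\int w_t(P_4)_0 w_t + 4\int Q_0 w_t = O(\|w_t\|)$ gives the bound from $(P_4)_0$-coercivity alone.
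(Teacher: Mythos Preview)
Your proposal has a genuine gap: you never extract the key consequence of the hypothesis $R_t>0$, and your workaround via bounding $III[w_t]$ does not close.

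The paper's argument uses $R_t>0$ in exactly one place, and it is decisive. From the scalar curvature equation
\[
\Delta_0 w_t + |\nabla_0 w_t|^2 + \tfrac{1}{6}R_t e^{2w_t} = \tfrac{1}{6}R_0,
\]
the assumption $R_t>0$ gives $\Delta_0 w_t + |\nabla_0 w_t|^2 \le \tfrac{1}{6}R_0$ pointwise; integrating over $M$ immediately yields the uniform bound $\int |\nabla_0 w_t|^2 \le \tfrac{1}{6}\int R_0$. With the gradient controlled, the right-hand side of the tested weak formulation is bounded (the curvature term by $C\|\nabla_0 w_t\|_{L^2}^2 \le C$, the linear term by Poincar\'e), and the left-hand side
\[
\int (\Delta_0 w_t)^2 + (1-t)\big[3|\nabla_0 w_t|^2\Delta_0 w_t + 2|\nabla_0 w_t|^4\big]
\]
is handled by the \emph{pointwise} algebraic inequality $a^2 + 3(1-t)ab + 2(1-t)b^2 \ge \tfrac{3}{8}a^2$ (valid precisely for $t\ge\tfrac{2}{3}$, which is why the continuity path starts there). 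No estimate on $III[w_t]$ is needed.

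Your alternative route---packaging the nonlinear terms as $(1-t)\cdot III[w_t]$ and then bounding $\int R_t^2\,dV_t$ from the Euler--Lagrange equation---does not work as stated. Integrating \ref{eq:el} against $dV_t$ gives only $\tfrac{t}{4}\int R_t^2 - 3t\int |E_t|^2 = 12tk_p$, which is equivalent to the conformal invariance $\int Q_t\,dV_t = k_p$ and provides no separate bound on $\int R_t^2$ or $\int |E_t|^2$ individually. Your appeal to ``Gauss--Bonnet/conformal-invariant control on $\int |E_t|^2$'' is exactly this same identity rewritten, so the argument is circular. The $(P_4)_0$-coercivity you invoke is also unhelpful here: it controls $\int (\Delta_0 w_t)^2$ in terms of $\int w_t(P_4)_0 w_t$, but the latter is not what appears on the left side once you test \ref{eq:weak} with $w_t$---the cubic and quartic gradient terms are genuinely extra and must be absorbed, not produced by $(P_4)_0$.
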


\begin{proof}
Substituting $\phi = w_t$ in (\ref{eq:weak}), we have
\begin{multline*}
\int  (\Delta_0 w_t)^2 + (1-t) [3 |\nabla_0 w_t|^2 \Delta_0 w_t + 2 |\nabla_0  w_t|^4] \\
= -(t-\frac{1}{3})\int R_0 |\nabla_0 w_t|^2 + 2t \int Ric_0 (\nabla_0 w_t , \nabla_0 w_t ) - \int (2tQ_0 + 2\gamma(t) |\eta|^2_0 ) w_t
\end{multline*}
Recall the scalar curvature equation
\begin{equation*}
\Delta_0 w_t + |\nabla_0 w_t|^2 + \frac{1}{6} R_t e^{2w_t} = \frac{1}{6} R_0.
\end{equation*}
As $R_t >0$, we have $\int_M |\nabla_0 w_t|^2 = \int_M \Delta_0 w_t + | \nabla_0 w_t|^2 < \frac{1}{6} \int_M R_0 $ which is an uniform bound on $\int |\nabla_0 w_t| ^2 $. 

By Young's inequality, we have $a^2 + 3(1-t)ab + 2(1-t)b^2 \ge [1- \frac{9(1-t)}{8}]a^2 \ge \frac{3}{8} a^2$ for any real number $a, b$ and $t \ge \frac{2}{3}$. Applying this inequality to the left hand side, 
\begin{equation*}
\int  (\Delta_0 w_t)^2 + (1-t) [3 |\nabla_0 w_t|^2 \Delta_0 w_t + 2 |\nabla_0  w_t|^4] \ge \frac{3}{8} \int_M (\Delta_0 w_t)^2
\end{equation*}
For the right hand side,
\begin{align*}
& -(t-\frac{1}{3})\int R_0 |\nabla_0 w_t|^2 + 2t \int Ric_0 (\nabla_0 w_t , \nabla_0 w_t ) - \int (2tQ_0 + 2\gamma(t) |\eta|^2_0 ) w_t \\
&\le 2t |Ric_0|_{L^\infty} \int |\nabla_0 w_t |^2 - \int (2tQ_0 + 2\gamma(t) |\eta|^2_0 ) (w_t - \overline{w}_t) \\
&\le C \int |\nabla_0 w_t |^2 + \frac{1}{2} \int (2tQ_0 + 2\gamma(t) |\eta|^2_0 )^2  + \frac{1}{2}\int  (w_t - \overline{w}_t)^2 \\
&\le C  \int |\nabla_0 w_t |^2 + C
\end{align*}
where at the last line, we used the \Poincare 's inequality. As $\overline{w}_t = \fint w_t = 0$, this gives us an upper bound on the full $W^{2,2}(M)$-norm of $w_t$.
\end{proof}

\section{The method of continuity}

In this section we establish positivity of linearized operators for the equations considered in Section 2 and prove that we can conformally deform the background metric so that both the scalar curvature an $Q$-curvature are positive pointwisely.

Let $w_t$ be a critical point of the functional $F_t$ defined in equation (\ref{eq:functional}), and let $g_t = e^{2w_t}g_0$ be the corresponding metric. Denote the linearization of the equation (\ref{eq:el}) at $g_t$  by $L_t$. When $L_t$ is restricted to the H\"{o}lder space $C^{4,\alpha}(M)$ for $0<\alpha<1$, $L_t:C^{4,\alpha}(M) \rightarrow C^{\alpha}(M)$ is a bounded linear operator. The explicit formula for $L_t$ is derived in the proof of \cite[Theorem 2.1]{CY} and is as follows.

\begin{equation} \label{eq:linearization}
\langle L_t \phi, \phi \rangle = t\langle (P_4)_t \phi, \phi \rangle + (1-t) \big[\int (\Delta_t \phi)^2 - \frac{1}{3} \int R_t |\nabla_t \phi |^2 \big]
\end{equation}
for $\phi \in W^{2,2}(M)$. To prove the opennes part of the continuity method, we need to prove that $\ker L_t$ is trivial when the scalar curvature $R_t$ is positive.

\begin{lemma} \label{lemma:linearization}
Let $w_t$ be a critical point of the functional (\ref{eq:functional}) with positive scalar curvature $R_t$ where $\frac{2}{3} \le t \le 1$. Then, $L_t$ is a non-negative operator on $W^{2,2}(M)$ and $\ker L_t \simeq \mathbb{R}$.
\end{lemma}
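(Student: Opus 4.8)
The plan is to analyze the quadratic form $\langle L_t\phi,\phi\rangle$ given in \eqref{eq:linearization} directly. First I would observe that the term $t\langle (P_4)_t\phi,\phi\rangle$ is non-negative: this is \emph{not} automatic from Gursky's theorem, because $(P_4)_t$ is the Paneitz operator of the deformed metric $g_t$, and Gursky's non-negativity statement requires $Y(M,[g_t])>0$ and $k_p\ge 0$. However, $Y$ and $k_p$ are conformal invariants, so both hypotheses persist along the conformal class; hence $(P_4)_t$ is non-negative for every $t$, with kernel exactly the constants (again by \cite{G2}, since $k_p<8\pi^2$ rules out the round sphere). For the second bracket, I would use the hypothesis $R_t>0$: the term $-\frac{1}{3}\int R_t|\nabla_t\phi|^2$ is non-positive, so this is the term to control. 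The key algebraic point is that $\int(\Delta_t\phi)^2$ dominates $\frac13\int R_t|\nabla_t\phi|^2$ whenever $R_t>0$, but this domination is exactly the content of Lemma~\ref{lemma:g1}-type reasoning (it is the statement that the conformal Laplacian, or rather the operator $\Delta_t^2 + \frac13\mathrm{div}(R_t\nabla\cdot)$... wait, sign) — more precisely, I would recall from \cite{CY} (proof of Theorem 2.1) or \cite{G1} that when $R_t>0$ one has $\int(\Delta_t\phi)^2 - \frac13\int R_t|\nabla_t\phi|^2 \ge 0$ for all $\phi$, with equality forcing $\phi$ constant. The cleanest route is to integrate by parts: $\int(\Delta_t\phi)^2 = \int |\nabla_t^2\phi|^2 + Ric_t(\nabla_t\phi,\nabla_t\phi)$ (Bochner), which is not obviously enough, so instead I would argue via the substitution used in \cite{G1}: set $\psi = e^{c w_t}$ or test against the first eigenfunction of the relevant Schrödinger operator.

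\textbf{Main steps, in order.} (1) Conformal invariance of $Y$ and $k_p$ gives $Y(M,[g_t])>0$ and $0<k_p<8\pi^2$, hence by \cite[Theorem A]{G2} the operator $(P_4)_t$ is non-negative on $W^{2,2}(M)$ with $\ker (P_4)_t = \{\text{constants}\}$ — here the strict inequality $k_p<8\pi^2$ is used to exclude the sphere case where the kernel could a priori be larger. (2) For the $(1-t)$ term, show $Q_t[\phi] := \int(\Delta_t\phi)^2 - \frac13\int R_t|\nabla_t\phi|^2 \ge 0$ when $R_t>0$; I expect to do this by writing $\int(\Delta_t\phi)^2 \ge \lambda_1 \int |\nabla_t\phi|^2$ is false in general, so instead use the sharp fact that the operator $-\Delta_t - \frac{1}{3}R_t$ acting on... no: the correct manipulation is $\int (\Delta_t\phi + \tfrac13 \text{something})$ — I would reproduce the completion-of-square from \cite{CY}, namely that the second variation of $III$ at a metric with $R>0$ is non-negative, which is their Lemma, and equality holds iff $\nabla_t\phi \equiv 0$. (3) Conclude $\langle L_t\phi,\phi\rangle \ge 0$, and if it vanishes then both pieces vanish: from (1) (when $t>0$) $\phi$ is constant; the edge case $t=0$ does not occur since $t\ge\frac23$. (4) Therefore $\ker L_t$, which for a non-negative self-adjoint operator equals the null space of the quadratic form, is exactly $\mathbb{R}$ (constants). (5) Note constants genuinely lie in the kernel: plugging $\phi\equiv$ const into \eqref{eq:linearization} gives $0$, consistent with the Euler–Lagrange equation being invariant under adding constants to $w_t$ combined with the volume normalization — one should check the linearization annihilates constants, which follows from differentiating the constraint $\int e^{4w}=\text{const}$ or simply from the explicit form since $\nabla_t(\text{const})=0$ and $(P_4)_t(\text{const})=0$.

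\textbf{Expected main obstacle.} The delicate point is step (2): showing the functional $\int(\Delta_t\phi)^2 - \frac13\int R_t|\nabla_t\phi|^2$ is non-negative under $R_t>0$. One is tempted to bound $\int R_t|\nabla_t\phi|^2 \le \|R_t\|_\infty \int|\nabla_t\phi|^2$ and then use a Poincaré/spectral gap for $\int(\Delta_t\phi)^2$ versus $\int|\nabla_t\phi|^2$, but the constants need not line up. The actual mechanism, as in \cite{G1} and \cite{CY}, is more subtle and uses the \emph{pointwise} scalar curvature equation relating $R_t$, $R_0$ and $w_t$ rather than just $R_t>0$ as a sign condition — alternatively it uses that $\phi$ ranges over the tangent space at a critical point where additional structure is available. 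I would need to identify exactly which version of this estimate appears in \cite[proof of Theorem 2.1]{CY}: I believe it is precisely the statement that a metric with positive scalar curvature is a strict local minimum of $III$ modulo the obvious kernel, proved by an integration by parts that produces $\int R_t|\nabla_t\phi|^2$ with a favorable sign after using $\Delta_t$ on $R_t$ — and this is where Lemma~\ref{lemma:g1}'s hypothesis $\Delta R - \frac16 R^2 \le 0$ may quietly re-enter via the Euler–Lagrange equation \ref{eq:el}. Reconciling these is the crux; everything else is bookkeeping.
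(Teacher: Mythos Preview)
Your decomposition into the two pieces $t\langle(P_4)_t\phi,\phi\rangle$ and $(1-t)\big[\int(\Delta_t\phi)^2-\tfrac13\int R_t|\nabla_t\phi|^2\big]$, with the plan of showing each is separately non-negative, has a genuine gap at step~(2): the inequality
\[
\int(\Delta\phi)^2 \;\ge\; \tfrac13\int R\,|\nabla\phi|^2
\]
is \emph{false} for a general metric with $R>0$. When $R$ is constant this is exactly $\lambda_1(-\Delta)\ge R/3$, which fails e.g.\ on $S^1(L)\times S^3$ for $L$ large (constant $R=6$ but $\lambda_1\to 0$). So neither ``$R_t>0$'' alone nor any completion-of-squares for the functional $III$ will give you this piece, and the references you cite do not contain such a statement. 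You correctly sense at the end that the Euler--Lagrange equation must re-enter, but you do not see how; that is exactly the missing idea.

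The paper's proof does \emph{not} separate $L_t$ into a Paneitz part and a remainder. Instead it writes
\[
\langle L_t\phi,\phi\rangle \;=\; \int(\Delta_t\phi)^2 \;-\; 2t\!\int E_t(\nabla_t\phi,\nabla_t\phi)\;+\;\bigl(\tfrac{t}{2}-\tfrac13\bigr)\!\int R_t|\nabla_t\phi|^2
\]
(here $E_t$ is traceless Ricci), and then uses the Euler--Lagrange equation \ref{eq:el} in the form $3t|E_t|^2 \le -\Delta_t R_t + \tfrac{t}{4}R_t^2$ to control the middle term. Concretely: bound $|E_t||\nabla_t\phi|^2$ by Young with a free parameter $\epsilon$, producing a term $\int \tfrac{\Delta_t R_t}{R_t}|\nabla_t\phi|^2$; integrate that by parts and apply Bochner to turn it into $-\int(\Delta_t\phi)^2$ plus lower order. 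The upshot is
\[
\langle L_t\phi,\phi\rangle \;\ge\; C_1(\epsilon)\!\int(\Delta_t\phi)^2 \;+\; C_2(\epsilon)\!\int R_t|\nabla_t\phi|^2,
\]
and a direct check shows both $C_1,C_2$ can be made positive for some $\epsilon>0$ precisely when $t>\tfrac38$ (which covers $[\tfrac23,1]$). Gursky's non-negativity of $(P_4)_t$ is not invoked at all; the work is done entirely by the EL equation and the positivity of $R_t$ (the latter is what allows dividing by $R_t$ in the integration by parts).
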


\begin{remark}
The proof uses the same techiniques appeared in \cite[Lemma 3.1]{G2} and \cite[Lemma 4.2]{CGY}. The only difference is that the ratio between the coefficients of $II[w]$ and $III[w]$ in the functional being considered lies in a different interval.

\end{remark}

\begin{proof}

Denote the traceless part of the Ricci cuvature of the metric $g_t$ by $E_t$. Let $\phi \in W^{2,2}(M)$. The equation \ref{eq:el} is rewirtten as
\begin{equation}\label{eq:tracelessel}
-\Delta_t R_t +\frac{t}{4} R_t^2 - 3t |E_t|^2 = -12\gamma(t) |\eta|^2_t > 0.
\end{equation}
We have the following inequality using the equation (\ref{eq:tracelessel})
\begin{align*}
-4\int E_t(\nabla_t \phi, \nabla_t \phi) &\ge \int -2\sqrt{3} |E_t| |\nabla_t \phi|^2\\
&\ge \int -2\epsilon \big(\frac{\sqrt{3}}{2}\big)^2 \frac{|E_t|^2}{R_t} |\nabla_t \phi |^2 - 2 \epsilon^{-1} R_t |\nabla_t \phi|^2\\
&\ge \frac{\epsilon}{2t} \int \frac{\Delta_t R_t}{R_t} |\nabla_t \phi_t |^2 - \big( \frac{\epsilon}{8} + \frac{2}{\epsilon}\big) \int R_t |\nabla_t \phi|^2.
\end{align*}
where $\epsilon$ is a positive number to be determined later.

In addition, we have
\begin{align*}
\int \frac{\Delta_t R_t }{R_t} |\nabla_t \phi|^2 & = \int - \nabla_t R_t \cdot \nabla_t(R_t^{-1}) |\nabla_t \phi|^2 - \frac{\nabla_t R_t}{R_t} \cdot \nabla_t |\nabla_t \phi|^2 \\
& = \int \frac{|\nabla_t R_t|^2}{R_t^2} |\nabla_t \phi|^2 - 2 \nabla_t^2 \phi \big(\nabla_t \phi, \frac{\nabla_t R_t}{R_t}\big) \\
& \ge \int \frac{|\nabla_t R_t|^2}{R_t^2} |\nabla_t \phi|^2 - \int \big[|\nabla_t^2 \phi|^2 + \frac{|\nabla_t R_t|^2}{R_t^2} |\nabla_t \phi|^2  \big] \\
& = -\int |\nabla_t^2 \phi|^2 \\
& = \int - (\Delta_t \phi)^2 + E_t(\nabla_t \phi, \nabla_t \phi) + \frac{1}{4} R_t |\nabla_t \phi|^2.
\end{align*}
where at the last line, we used the Bochner formula.

Combining above inequalities, we get 
\begin{equation*}
-4\int E_t(\nabla_t \phi, \nabla_t \phi) \ge   - \frac{\epsilon}{2t} \int (\Delta_t \phi_t)^2 + \frac{\epsilon}{2t} \int E_t(\nabla_t \phi,\nabla_t \phi) - \big(\frac{\epsilon}{8} + \frac{2}{\epsilon} -\frac{\epsilon}{8t}\big) \int R_t |\nabla_t \phi | ^2.
\end{equation*}

Hence we have
\begin{equation} \label{eq:traceless}
(-4-\frac{\epsilon}{2t}) \int E_t(\nabla_t \phi,\nabla_t \phi) \ge -\frac{\epsilon}{2t} \int (\Delta_t \phi)^2 + \big(\frac{\epsilon}{8t} - \frac{\epsilon}{8} - \frac{2}{\epsilon}\big) \int R_t |\nabla_t \phi|^2.
\end{equation}

By plugging the inequality (\ref{eq:traceless}) into the right hand side of the equation (\ref{eq:linearization}), we have

\begin{align*}
\langle L_t \phi, \phi \rangle & = \int (\Delta_t \phi)^2 -2t \int E_t(\nabla_t \phi,\nabla_t \phi) + ( \frac{1}{2} t - \frac{1}{3})\int |\nabla_t \phi|^2 \\
& \ge C_1(\epsilon) \int (\Delta_t \phi)^2  + C_2(\epsilon) \int R_t|\nabla_t \phi|^2
\end{align*}
where $C_1(\epsilon) =  (1 - \frac{\epsilon}{4+\frac{\epsilon}{2t}})$ and $C_2(\epsilon) = \frac{1}{2} t - \frac{1}{3} - (\frac{2t}{4+\frac{\epsilon}{2t}})(\frac{\epsilon}{8} + \frac{2}{\epsilon} -\frac{\epsilon}{8t})$.

Now we check that we can choose $\epsilon$ so that both of coefficients $C_1(\epsilon)$ and $C_2(\epsilon)$ is positive when $\frac{2}{3} \le t \le 1$.

For $C_1(\epsilon)$, we see $1 - \frac{\epsilon}{4+\frac{\epsilon}{2t}} > 0 \iff \frac{4}{1-\frac{1}{2t}} > \epsilon$ and for $C_2(\epsilon)$ we compute: 
 $\frac{1}{2} t - \frac{1}{3} - (\frac{2t}{4+\frac{\epsilon}{2t}})(\frac{\epsilon}{8} + \frac{2}{\epsilon} -\frac{\epsilon}{8t}) > 0 \iff (\frac{1}{2}t - \frac{1}{3})(4+\frac{\epsilon}{2t})\epsilon-2t(\frac{\epsilon}{8} + \frac{2}{\epsilon} -\frac{\epsilon}{8t})\epsilon > 0 \iff (\frac{1}{2} -\frac{t}{4}-\frac{1}{6t})\epsilon^2 + (2t-\frac{4}{3})\epsilon - 4t > 0$. 
The later inequality is quadratic w.r.t $\epsilon$ so it is enough to show the inequality holds for $\epsilon = \frac{4}{1-\frac{1}{2t}}$.

It suffices to check $ 16(\frac{1}{2} -\frac{t}{4}-\frac{1}{6t}) +  4(2t-\frac{4}{3})(1-\frac{1}{2t}) - 4t(1-\frac{1}{2t})^2 > 0$ which is equivalent to $t>\frac{3}{8}$. Thus, we can choose $\epsilon$ so that $C_1(\epsilon)$ and $C_2(\epsilon)$ are both positive. It is trivial to see that $L_t \phi = 0$ if and only if $\phi$ is constant.
\end{proof}

\bigskip

Now we are ready to prove the main proposition of this section which implies that we can conformally deform the background metric so that both the scalar curvature and the $Q$-curvature are positive. The strategy is to apply the continuity method to equation \ref{eq:el}. Roughly speaking, the opennes part follows from the Lemma \ref{lemma:linearization} and the closedness part follows from the Proposition \ref{lemma:weak}.
\begin{proposition} \label{lemma:QR}
Let $(M, g_0)$ be a closed manifold with conformal invariants $k_p$ and $Y(M, [g_0])$ positive. Then, there exists a conformal deformation $w$ such that $Q_w>0$ and $R_w>0$.
\end{proposition}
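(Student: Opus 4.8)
The plan is to run the method of continuity on the one-parameter family of equations \ref{eq:el} for $t\in[\tfrac23,1]$, with the crucial side constraint that $R_t>0$ is maintained along the whole path. Define $S=\{t\in[\tfrac23,1]:\ \text{($\ast$)}_t\text{ has a solution }w_t\text{ with }\overline w_t=0\text{ and }R_t>0\}$. Lemma \ref{lemma:starting} shows $t_0=\tfrac23\in S$, so $S\neq\emptyset$; the goal is to prove $S$ is both open and closed in $[\tfrac23,1]$, whence $1\in S$, and the solution at $t=1$ solves $(P_4)_1 w + 2Q_1 = 2k_p e^{4w}$ — wait, more precisely at $t=1$ the equation reads $\gamma(1)|\eta|_1^2 + Q_1=0$, i.e.\ $Q_1 = -\gamma(1)|\eta|_1^2 = \tfrac{k_p}{\int|\eta|_0^2\,dV_0}|\eta|_1^2 > 0$ since $k_p>0$ and $|\eta|^2>0$ pointwise. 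Combined with $R_1>0$ this gives the metric $g_1=e^{2w_1}g_0$ with both curvatures positive, which is the assertion of Proposition \ref{lemma:QR}.

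For openness at a point $t^*\in S$ with solution $w_{t^*}$: linearize \ref{eq:el} at $g_{t^*}$. By Lemma \ref{lemma:linearization}, $L_{t^*}$ is non-negative on $W^{2,2}(M)$ with $\ker L_{t^*}\simeq\mathbb R$ (the constants). Working on $C^{4,\alpha}(M)\to C^\alpha(M)$ after quotienting out constants on the source and projecting off constants on the target (or equivalently imposing $\overline w=0$ and noting the equation integrated against $1$ is automatically consistent, since the total $Q$-curvature and $\int|\eta|^2$ balance is the Gauss–Bonnet-type constraint — one must check this solvability condition, but $\gamma(t)$ was chosen precisely to make it hold), $L_{t^*}$ is an isomorphism, so the implicit function theorem produces solutions $w_t$ for $t$ near $t^*$ in $C^{4,\alpha}$. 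These are close to $w_{t^*}$ in $C^{4,\alpha}$, hence $R_t$ is close to $R_{t^*}>0$ uniformly, so $R_t>0$ persists for $t$ near $t^*$; thus a neighborhood of $t^*$ lies in $S$. Smoothness of the $w_t$ follows from elliptic regularity as in \cite{UV}.

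For closedness: take $t_j\in S$, $t_j\to \bar t$, with solutions $w_{t_j}$, $\overline{w}_{t_j}=0$, $R_{t_j}>0$. Proposition \ref{lemma:weak} gives a uniform bound $\|w_{t_j}\|_{W^{2,2}}<C$. Then standard bootstrapping for the fourth-order equation \ref{eq:el} (first Moser–Trudinger to control $e^{4w_{t_j}}$ in every $L^p$, then $L^p$ and Schauder estimates applied to $P_4 w_{t_j} = \text{(lower order in }w_{t_j}\text{)}$) upgrades this to a uniform $C^{4,\alpha}$ bound, so along a subsequence $w_{t_j}\to w_{\bar t}$ in $C^4$, and $w_{\bar t}$ solves $(\ast)_{\bar t}$ with $\overline{w}_{\bar t}=0$. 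The remaining point is that $R_{\bar t}\ge 0$ passes to the limit, and then one must rule out $R_{\bar t}$ vanishing somewhere: rewrite $(\ast)_{\bar t}$ in traceless form as in \eqref{eq:tracelessel}, $-\Delta_{\bar t}R_{\bar t} + \tfrac{\bar t}{4}R_{\bar t}^2 - 3\bar t|E_{\bar t}|^2 = -12\gamma(\bar t)|\eta|^2_{\bar t}>0$, which for $\bar t\in[\tfrac23,1]$ gives in particular $\Delta_{\bar t}R_{\bar t} - \tfrac16 R_{\bar t}^2 \le -\tfrac{\bar t}{4}R_{\bar t}^2 + 3\bar t|E_{\bar t}|^2 + \tfrac16 R_{\bar t}^2 - \dots$; the clean way is to use $\Delta R - \tfrac16 R^2 \le 0$ together with Lemma \ref{lemma:g1} and $Y(M,[g_0])>0$ to conclude $R_{\bar t}>0$ strictly. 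One checks the differential inequality: from \eqref{eq:tracelessel}, $\Delta_{\bar t}R_{\bar t} = \tfrac{\bar t}{4}R_{\bar t}^2 - 3\bar t|E_{\bar t}|^2 + 12\gamma(\bar t)|\eta|_{\bar t}^2$, and since $\gamma(\bar t)<0$ and $\bar t\le 1$ we get $\Delta_{\bar t}R_{\bar t} \le \tfrac14 R_{\bar t}^2 \le \tfrac16 R_{\bar t}^2$ once we also absorb — actually we need $\tfrac{\bar t}{4}\le \tfrac16$, which fails for $\bar t>\tfrac23$; the correct bound uses that $3\bar t|E_{\bar t}|^2\ge 0$ is the wrong sign, so instead one keeps the sharper form and argues as in Lemma \ref{lemma:starting}, i.e.\ $-\Delta_{\bar t}R_{\bar t} + \tfrac16 R_{\bar t}^2 \ge -\Delta_{\bar t}R_{\bar t} + \tfrac{\bar t}{4}R_{\bar t}^2 = 3\bar t|E_{\bar t}|^2 - 12\gamma(\bar t)|\eta|^2_{\bar t} > 0$ holds only when $\tfrac16 \ge \tfrac{\bar t}{4}$; since this genuinely requires care, the honest statement is that the strict positivity $R_{\bar t}>0$ at the limit is exactly the delicate point and is handled by the same Bochner-type manipulation as in Lemma \ref{lemma:linearization} applied with $\phi$ replaced by a suitable test object, or more simply by a continuity/maximum-principle argument showing $R_t$ cannot touch zero.

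Thus the main obstacle is not openness (which is a direct consequence of Lemma \ref{lemma:linearization} plus the implicit function theorem) but closedness — specifically, propagating the strict positivity $R_t>0$ to the limiting parameter, since $R_t>0$ is an open condition that could in principle degenerate. The resolution is that the algebraic identity \eqref{eq:tracelessel} forces a differential inequality on $R_{\bar t}$ that, via Lemma \ref{lemma:g1} and the hypothesis $Y(M,[g_0])>0$, excludes $R_{\bar t}\equiv 0$ and hence (by the strong maximum principle, $R_{\bar t}$ being a nonnegative supersolution) excludes $R_{\bar t}$ vanishing anywhere. I would therefore organize the proof as: (i) set up $S$ and note $t_0\in S$ by Lemma \ref{lemma:starting}; (ii) openness via Lemma \ref{lemma:linearization} and the implicit function theorem, with persistence of $R_t>0$ by $C^{4,\alpha}$-closeness; (iii) closedness via the uniform $W^{2,2}$ bound of Proposition \ref{lemma:weak}, elliptic bootstrap to $C^{4,\alpha}$, and the positivity argument above; (iv) conclude $1\in S$ and read off $Q_1>0$, $R_1>0$ from $(\ast)_1$.
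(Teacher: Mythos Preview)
Your overall architecture is correct and matches the paper's: continuity on $(\ast)_t$ for $t\in[\tfrac23,1]$, $t_0\in S$ by Lemma~\ref{lemma:starting}, openness via Lemma~\ref{lemma:linearization} and the implicit function theorem, closedness from Proposition~\ref{lemma:weak}. The reading of $(\ast)_1$ as $Q_1=-\gamma(1)|\eta|_1^2>0$ is exactly right.

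There is, however, a genuine gap in your closedness step, precisely at the point you flag as ``delicate''. You try to reduce the strict positivity of $R_{\bar t}$ to Lemma~\ref{lemma:g1}, i.e.\ to the inequality $\Delta R-\tfrac16 R^2\le 0$, and you correctly observe that \eqref{eq:tracelessel} only gives $\Delta_{\bar t}R_{\bar t}\le \tfrac{\bar t}{4}R_{\bar t}^2$, which fails to imply $\Delta R-\tfrac16 R^2\le 0$ when $\bar t>\tfrac23$. So Lemma~\ref{lemma:g1} simply does not apply along the path, and no amount of ``Bochner-type manipulation as in Lemma~\ref{lemma:linearization}'' will manufacture the missing $\tfrac16$. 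Lemma~\ref{lemma:g1} is used in the paper only at the starting time $t_0=\tfrac23$; for $t>\tfrac23$ a different argument is needed.

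The fix is short and does not go through Lemma~\ref{lemma:g1} at all. First, $R_{\bar t}\ge 0$ follows from passing the scalar curvature identity $\Delta_0 w_{t_n}+|\nabla_0 w_{t_n}|^2=\tfrac16(R_0-R_{t_n}e^{2w_{t_n}})\le\tfrac16 R_0$ to the limit against nonnegative test functions (weak $W^{2,2}$ convergence suffices; no $C^{4,\alpha}$ bootstrap or Moser--Trudinger is needed here, and note that $(\ast)_t$ contains no $e^{4w}$ term, so that detour is misplaced). Second, strict positivity: rewrite \eqref{eq:tracelessel} as
\[
-\Delta_{\bar t}R_{\bar t}+\Big(\tfrac{\bar t}{4}R_{\bar t}\Big)R_{\bar t}
=3\bar t\,|E_{\bar t}|^2-12\gamma(\bar t)|\eta|_{\bar t}^2>0,
\]
so that $R_{\bar t}\ge 0$ is a supersolution of $-\Delta_{\bar t}+c$ with $c=\tfrac{\bar t}{4}R_{\bar t}\ge 0$ and strictly positive right-hand side. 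The strong maximum principle then forbids an interior zero: if $R_{\bar t}(p_0)=0$ at a minimum, evaluating gives $-\Delta_{\bar t}R_{\bar t}(p_0)\le 0$ and the quadratic term vanishes, contradicting the strictly positive right-hand side. This is exactly the paper's argument; drop the appeal to Lemma~\ref{lemma:g1} and your proof goes through.
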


\begin{proof}
We use the method of continuity to solve the one-parameter family of equations \ref{eq:el}. We define
\begin{equation*}
S = \{t \in [2/3, 1] | \text{ \ref{eq:el} has a smooth solution with positive scalar curvature}\}.
\end{equation*}
For $t = \frac{2}{3}$, we find a minimizer $w_{\frac{2}{3}}$ for the functional $\gamma_1(2/3) I + 8 II + \frac{1}{3} III$. Then by Lemma \ref{lemma:starting}, $g_{\frac{2}{3}}$ has positive scalar curvature. Hence $\frac{2}{3} \in S$. We will show that $S$ is both open and closed to conclude $S = [2/3, 1]$.

First we prove that $S$ is open. Assume $t_1 \in S$. Since the linearization of the equation \ref{eq:el} is positive by Lemma \ref{lemma:linearization}, there is a unique smooth solution $w_t$ of \ref{eq:el} for all $t$ sufficiently close to $t_1$ if we normalize by $\int w_t dV_0 = 0$ by the perturbation theorem. By taking a suffciently small $C^{4, \alpha}(M)$-neighborgood of $w_{t_1}$, we can guarantee that these solutions $w_t$ and $g_t \coloneqq e^{2w_t} g_0$ also have positive scalar curvature. This shows that $S$ is open.

Now we prove that $S$ is closed. Suppose $t_n \in S$, and $t_n \rightarrow t'$. Let $w_{t_n}$ denote the corresponding solutions of \ref{eq:el}. By Proposition \ref{lemma:weak}, there exists $w_{t'} \in W^{2,2}(M)$, such that $w_{t_n} \rightharpoonup w_{t'}$ in $W^{2,2}(M)$. It is easy to check that $w_{t'}$ is a weak solution to the \ref{eq:el} for $t = t'$. The regularity theorem of \cite{UV} shows $w_{t'}$ is smooth. Next, we prove that $R_{t'} >0 $.
Since the scalar curvature $R_{t_n}$ is always positive, for any non-negative smooth test function $\phi$, the following inequality holds:
\begin{equation*}
\int \phi (\Delta_0 w_{t_n} + |\nabla_0 w_{t_n} |^2 )  = \int \frac{1}{6}(R_0 - R_{t_n} e^{2w_{t_n}} ) \phi \le \int \frac{1}{6}R_0 \phi.
\end{equation*}
As $w_{t_n} \rightharpoonup w_{t'}$, we also have 
\begin{equation*}
\int \frac{1}{6}(R_0 - R_{t'} e^{2w_{t'}} )\phi = \int \phi (\Delta_0 w_{t'} + |\nabla_0 w_{t'} |^2 )  \le \int \frac{1}{6}R_0 \phi
\end{equation*}
Thus $R_{t'}$ is non-negative in weak sense which also implies that $R_{t'}$ is non-negative pointwisely.  If $R_{t'}$ achieves 0 at some point, it is a contradiction by the strong maximum principle applied to the equation $-\Delta_t R_t +\frac{t}{4} R_t^2  = 3t |E_t|^2-12\gamma(t) |\eta|^2_t > 0$.  Hence $t' \in S$ and $S$ is closed. 
\end{proof}

\section{Compactness of solutions : closed manifolds}
Let $(M, g)$ be a closed compact Riemannian manifold with $k_p, Y(M, [g])>0$. By Proposition \ref{lemma:QR}, We can deform the metric so that both the $Q$-curvature and the scalar curvature are positive. Hence, without loss of generality, we may assume that the background metric satiesfies $Q, R> 0$ to prove Theorem \ref{thm:main1}. Also, we normalize the volume of the background metric to be 1.

We will consider the following 1-parameter family of fourth order equations with constraint for  $t \in [0,1]$.
\begin{equation}\label{eq:ls}\tag*{($\ast \ast$)$_{t}$}
\begin{cases}
\,Q_t = tk_p + (1-t)Q e^{-4w_t} \iff P_4 w_t = 2tk_p e^{4w_t} - 2tQ\\
\, \text{with $R_t>0$, and normalization $Vol(M, g_t) = \int_M e^{4w_t}dV =1$}
\end{cases}
\end{equation}
where $g_t \coloneqq e^{2w_t} g$ and $R_t$, $Q_t$ denote the scalar curvature and the $Q$-curvature of the metric $g_t$, respectively. Observe that 
$$\int_M Q_t dV_t = \int_M (tk_p + (1-t) Q e^{-4w_t} ) dV_t = tk_p + (1-t) \int_M Q dV = k_p$$
which implies that the equation is consistent wth the fact that  $k_p$ is a conformal invariant.
In this section and the next section,  $w_t$ will denote a solution of the elliptic PDE \ref{eq:ls} instead of the equation \ref{eq:el}.

Our goal in this section is to establish the compactness of the set 
\begin{equation*}
\{ w_t \, | \, \text{$w_t$ is a solution of the equation \ref{eq:ls} for some  $t \in [0,1]$ }\}
\end{equation*}
 in $C^{4, \alpha}(M)$-topology for any $0< \alpha < 1$. This compactness result will be used in the next section to prove that the equation \ref{eq:ls} indeed has a solution at $t=1 $ while we apply Leray-Schauder degree theory to the one-parameter of equations \ref{eq:ls} for $0 \le t \le 1$. Since $P_4$ is a non-negative operator, $w_0 \equiv 0$ is the only solution to the equation \ref{eq:ls} at $t = 0$, and $g_0 = g$.

We first establish some a-priori estimates for the equation \ref{eq:ls}. Through out this section we will denote constants independent of $w_t$ and $t$ by $C$. 
\begin{proposition} \label{lemma:localestimate}
Let $w_t$ be a solution of the PDE \ref{eq:ls} for some $0\le t \le 1$. Let $p \in M$ and $B_r(p)$ be a geodesic ball centered at $p$ with radius $r$.
Then, $\int_M |\nabla w_t|^2, \int_M (w_t-\overline{w}_t)^2 < C$,  and $\fint_{B_r(p)} |\nabla w_t|^2 < \frac{C}{r^2}$ for sufficiently small $r$, where $C$ is a constant independent of $t$ and $r$, and $\overline{w}_t \coloneqq \fint w_t dV$.
\end{proposition}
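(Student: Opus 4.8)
The plan is to derive the three a priori bounds in sequence, using the normalization $\int_M e^{4w_t}\,dV = 1$ and the positivity constraint $R_t > 0$ as the essential inputs. First I would recall the scalar curvature equation in the form
\begin{equation*}
\Delta w_t + |\nabla w_t|^2 + \tfrac{1}{6}R_t e^{2w_t} = \tfrac{1}{6}R_0,
\end{equation*}
(here $\Delta$, $\nabla$ are taken with respect to the fixed background $g = g_0$). Since $R_t > 0$ and $R_t e^{2w_t} \geq 0$, integrating over $M$ and discarding the nonnegative term gives $\int_M |\nabla w_t|^2 = \int_M(\Delta w_t + |\nabla w_t|^2) \leq \tfrac{1}{6}\int_M R_0\,dV \leq C$, exactly as in the proof of Proposition~\ref{lemma:weak}. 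This is the uniform Dirichlet energy bound.

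Next, to control $\int_M (w_t - \overline{w}_t)^2$ I would combine the gradient bound just obtained with Poincaré's inequality, which gives $\int_M (w_t - \overline{w}_t)^2 \leq C\int_M |\nabla w_t|^2 \leq C$. (If a slightly sharper route is wanted, one can instead use the Moser--Trudinger inequality together with the normalization $\int_M e^{4w_t} = 1$ to bound $\overline{w}_t$ from above, and use Jensen on $\int e^{4w_t}$ to bound $\overline{w}_t$ from below, hence bound $w_t$ in $L^2$ directly; but the Poincaré argument from the $W^{1,2}$ bound is cleanest and suffices for the stated $L^2$ estimate.)

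For the local estimate $\fint_{B_r(p)} |\nabla w_t|^2 < C/r^2$, I would test the scalar curvature equation against a cutoff. Take $\eta \in C_c^\infty(B_{2r}(p))$ with $\eta \equiv 1$ on $B_r(p)$, $0 \leq \eta \leq 1$, and $|\nabla \eta| \leq C/r$. Multiplying the scalar curvature equation by $\eta^2$, integrating by parts in the $\Delta w_t$ term, and again dropping the nonnegative $\tfrac16 R_t e^{2w_t}\eta^2$ contribution yields
\begin{equation*}
\int \eta^2 |\nabla w_t|^2 \leq \int 2\eta|\nabla \eta||\nabla w_t| + \tfrac16\int \eta^2 R_0 \leq \tfrac12\int \eta^2|\nabla w_t|^2 + C\int |\nabla\eta|^2 + Cr^4,
\end{equation*}
so that $\int_{B_r(p)}|\nabla w_t|^2 \leq \int \eta^2|\nabla w_t|^2 \leq C\,r^{-2}\cdot|B_{2r}(p)| + Cr^4 \leq Cr^2$ for $r$ small (using $|B_{2r}(p)| \leq Cr^4$ on the compact manifold). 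Dividing by $|B_r(p)| \geq c\,r^4$ gives $\fint_{B_r(p)}|\nabla w_t|^2 \leq C/r^2$, as claimed.

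The main obstacle, such as it is, lies in making sure every constant is genuinely independent of $t$: this hinges entirely on the fact that $R_t > 0$ lets us discard the curvature term without any lower bound on $w_t$, and on the volume normalization $\int e^{4w_t} = 1$ being available uniformly. No compactness or elliptic regularity is needed at this stage — these are purely integral identities plus Poincaré — so the argument is short; the only care required is the standard one of choosing $r$ small enough that geodesic balls behave Euclideanly (volume comparison $c r^4 \leq |B_r(p)| \leq C r^4$) uniformly over $p \in M$, which is immediate from compactness of $M$.
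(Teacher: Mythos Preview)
Your proposal is correct and follows essentially the same route as the paper: integrate the scalar curvature equation (using $R_t>0$ to drop the $\tfrac16 R_t e^{2w_t}$ term) to get the global $W^{1,2}$ bound, apply \Poincare's inequality for the $L^2$ bound, and then test against a cutoff $\eta_r^2$ with Young's inequality to obtain the local gradient estimate. The paper's argument is identical in structure and detail.
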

\begin{proof}
We have $\Delta w_t + |\nabla w_t|^2 \le \frac{1}{6} R$ from the scalar curvature equation. If we integrate both sides and apply \Poincare 's inequality, we get the first two inequalities.

For the second inequality, suppose $r$ is sufficiently smaller than the injectivity radius. We multiply a cut-off function $\eta_r^2$ on the scalar curvature equation. $\eta_r$ is a smooth test function satisfying $\eta_r \equiv 1$ on $B_r(p)$, $\eta_r \equiv 0$ on  $B_{2r}(p)$, and $|\nabla \eta_r | \le \frac{C}{r}$ for some constant $C$.
We have the following estimate
\begin{align*}
\int_{B_{2r}(p)} \eta_r^2 |\nabla w_t|^2&\le \int_M \frac{1}{6} R \eta_r^2 + 2\int_{B_{2r}(p)} \eta_r \nabla \eta_r \cdot \nabla w_t\\
&\le \int_M \frac{1}{6} R \eta_r^2 + \frac{1}{2} \int_{B_{2r}(p)} \eta_r^2 |\nabla w_t|^2 + 2\int_{B_{2r}(p)} |\nabla \eta_r|^2 \\
&\le \frac{1}{2} \int_{B_{2r}(p)} \eta_r^2 |\nabla w_t|^2 + C|R|_{L^\infty}r^4 + C r^2.
\end{align*}
This gives us the desired estimate.
\end{proof}

Next, we prove the following energy estimate for solutions of fourth order PDEs having bi-Laplacian as a leading-order term.
\begin{proposition}{(Energy estimate)} \label{lemma:energyestimate}
 Suppose $w$ is a weak solution to a fourth order PDE\, $\Delta^2 w + \delta (A ) dw +f = 0$ where $A$ is a smooth symmetric 2-tensor and $f$ is a function in $L^2(M)$. In other words, $$
\int_M \Delta w \Delta \phi + A(\nabla w, \nabla \phi) +f\phi = 0
$$  
for every $\phi \in W^{2,2}(M)$.  Then, for all sufficiently small $r>0$, $$
||\nabla^2 w ||_{L^2(B_{r})} \le  C_r (||w ||_{W^{1,2} ( B_{2r})} + ||f||_{L^2(B_{2r})})
$$
where $B_r$ and $B_{2r}$ are two concentric geodesic balls and $C_r$ is a constant depending on $r$ and $||A||_{L^{\infty}(B_{2r})}$.
\end{proposition}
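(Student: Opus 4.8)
The plan is to prove this as a Caccioppoli-type interior estimate for the bi-Laplacian: test the weak equation against $\phi=\eta^{4}w$ for a suitable cutoff $\eta$, and then absorb all the higher-order error terms by Young's inequality.

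First I would fix $r$ small enough that the concentric geodesic balls $B_{r}\subset B_{2r}$ lie in a single normal coordinate chart, and choose $\eta\in C_{c}^{\infty}(B_{2r})$ with $\eta\equiv 1$ on $B_{r}$, $0\le\eta\le 1$, $|\nabla\eta|\le C/r$, $|\nabla^{2}\eta|\le C/r^{2}$. Since $w\in W^{2,2}(M)$ and $\eta$ is smooth with compact support, $\phi:=\eta^{4}w$ is an admissible test function. Substituting it into $\int_{M}\Delta w\,\Delta\phi+A(\nabla w,\nabla\phi)+f\phi=0$ and expanding $\Delta(\eta^{4}w)$ and $\nabla(\eta^{4}w)$ by the product rule, the leading term produces $\int_{M}\eta^{4}(\Delta w)^{2}$; every remaining term carries either one derivative of $\eta$ (hence a factor $\eta^{3}/r$) or $\Delta(\eta^{4})$ (hence $\eta^{2}/r^{2}$), the $A$-term contributes $\int\eta^{4}A(\nabla w,\nabla w)$ plus a term with $\nabla(\eta^{4})$, and $f$ contributes $\int f\eta^{4}w$. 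The point of the power $\eta^{4}$ is that every error term then pairs, via Young's inequality, as a small constant times $\int\eta^{4}(\Delta w)^{2}$ plus a constant (depending on $r$ and $\|A\|_{L^{\infty}(B_{2r})}$) times $\int_{B_{2r}}(|\nabla w|^{2}+w^{2})+\|f\|_{L^{2}(B_{2r})}^{2}$; the worst case, $\int\Delta w\cdot w\,\Delta(\eta^{4})$, pairs as $(\eta^{2}\Delta w)(r^{-2}w)$, which is exactly why $\eta^{2}$ alone would not suffice. Absorbing the small multiples of $\int\eta^{4}(\Delta w)^{2}$ onto the left gives $\int_{B_{r}}(\Delta w)^{2}\le C_{r}\big(\|w\|_{W^{1,2}(B_{2r})}^{2}+\|f\|_{L^{2}(B_{2r})}^{2}\big)$.

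Next I would upgrade control of $(\Delta w)^{2}$ to control of $|\nabla^{2}w|^{2}$. Integrating $\int_{M}\eta^{4}|\nabla^{2}w|^{2}$ by parts twice and commuting covariant derivatives with the Ricci identity $\nabla_{i}\nabla^{i}\nabla^{j}w=\nabla^{j}\Delta w+R^{j}{}_{k}\nabla^{k}w$ yields an identity of the schematic form
\[
\int_{M}\eta^{4}|\nabla^{2}w|^{2}=\int_{M}\eta^{4}(\Delta w)^{2}-\int_{M}\eta^{4}\,\mathrm{Ric}(\nabla w,\nabla w)+\int_{M}\eta^{3}\,(\nabla\eta)\ast(\nabla^{2}w+\Delta w)\ast\nabla w,
\]
where $\mathrm{Ric}$ is the (fixed) Ricci tensor of $g$ and $\ast$ denotes contraction. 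The Ricci term is bounded by the fixed background geometry times $\int_{B_{2r}}|\nabla w|^{2}$, and the last integral—the only place a second derivative of $w$ reappears—is absorbed by Young's inequality, $\big|\int\eta^{3}(\nabla\eta)\ast(\nabla^{2}w)\ast\nabla w\big|\le\epsilon\int\eta^{4}|\nabla^{2}w|^{2}+C\epsilon^{-1}r^{-2}\int\eta^{2}|\nabla w|^{2}$, taking $\epsilon$ small. Combining this with the previous step and the bound $\int_{B_{2r}}|\nabla w|^{2}\le\|w\|_{W^{1,2}(B_{2r})}^{2}$ gives the claimed $\|\nabla^{2}w\|_{L^{2}(B_{r})}\le C_{r}\big(\|w\|_{W^{1,2}(B_{2r})}+\|f\|_{L^{2}(B_{2r})}\big)$.

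I expect no conceptual obstacle; the thing to be careful about is the bookkeeping—making sure the divergence-form term $\delta(A)dw$ never demands more than $W^{2,2}$-regularity of $w$ (it does not, since in the weak formulation it appears only contracted with $\nabla\phi$ and $A$ is bounded), and that the cutoff is taken with enough powers for all the absorptions to be legitimate. With the choice $\phi=\eta^{4}w$, everything then reduces to a routine, if somewhat lengthy, sequence of integrations by parts and applications of the Cauchy–Schwarz and Young inequalities.
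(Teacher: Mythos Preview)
Your argument is correct, but the paper takes a somewhat different and more compact route. Instead of testing with $\eta^{4}w$, the paper tests with $\phi=\eta_{r}^{2}w$ and uses the algebraic identity
\[
\int \Delta w\,\Delta(\eta_{r}^{2}w)=\int\big(\Delta(\eta_{r}w)\big)^{2}+\text{(terms involving only }w,\ \nabla w\text{)},
\]
obtained by expanding $(\Delta(\eta_{r}w))^{2}$ and removing the single remaining $\Delta w$ via one integration by parts on $\int 2|\nabla\eta_{r}|^{2}\,w\,\Delta w$. This bypasses the absorption issue you flagged (so $\eta^{2}$ does suffice, just not by naive Young), and it also makes your second step unnecessary: since $\eta_{r}w$ is compactly supported, $\int(\Delta(\eta_{r}w))^{2}=\int|\nabla^{2}(\eta_{r}w)|^{2}\ge\int_{B_{r}}|\nabla^{2}w|^{2}$ directly, without invoking the Ricci commutator identity. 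Your approach trades that algebraic trick for a higher cutoff power and a separate Bochner-type step; it is more systematic in the Caccioppoli spirit and works just as well, at the cost of slightly more bookkeeping and an explicit appearance of the background Ricci curvature in the constant.
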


\begin{proof}
We test with $\phi = \eta_r^2 w$. $\eta_r$ is a cut-off function defined in the proof of Lemma \ref{lemma:localestimate} with an additional property $|\nabla^2 \eta_r | \le \frac{C}{r^2}$. We have the following series of integral identities,
\begin{align*}
\int_M \Delta w \Delta(\eta_r^2 w )  =& \int_M \Delta w [\eta_r^2 \Delta w + 4\eta_r \nabla \eta_r \cdot w + (2\eta_r \Delta \eta_r + 2|\nabla \eta_r|^2 ) w] \\
 = &\int_M (\Delta (\eta_r w))^2- 4|\nabla \eta_r \cdot \nabla w|^2 - (\Delta \eta_r \cdot w)^2 + 2|\nabla \eta_r|^2 \Delta w \cdot w \\
& -4 \nabla \eta_r \cdot \nabla w \Delta \eta_r w \\
= & \int_M (\Delta(\eta_r w ))^2 - 2|\nabla \eta_r|^2 |\nabla w |^2 - 2 (\nabla |\nabla \eta_r|^2 \cdot \nabla w)w \\
& - 4|\nabla \eta_r \cdot \nabla w|^2 - (\Delta \eta_r \cdot w )^2 - 4(\nabla \eta_r \cdot \nabla w) \Delta \eta_r\ \cdot w \\
\ge & \int_{B_{2r}} (\Delta(\eta_r w ))^2 - C\int_{B_{2r}}  (\frac{|\nabla w|^2}{r^2} + \frac{w^2}{r^4})
\end{align*}
where at the last line, we use Young's inequality. For the lower order terms, we follow the standard argument.
\begin{align*}
\int_M A(\nabla w, \nabla ( \eta_r^2 w)) &  = \int_M \eta_r^2 A(\nabla w, \nabla w) +2 w \eta_r A(\nabla w, \nabla \eta_r) \\
 & \le C \int_{B_{2r}} ||A||_{L^\infty(B_{2r})}(|\nabla w|^2  + \frac{w^2}{r^2})
\end{align*}
\begin{align*}
\int_M f \eta_r^2 w & \ge - \int_{B_{2r}} |f| |w| \ge - ||f||_{L^2(B_{2r})} ||w||_{L^2(B_{2r})}
\end{align*}
As $\nabla (\eta_r w) |_{\partial B_{2r}} = 0$, integral by parts formula gives us $\int_{B_{2r}} (\Delta ( \eta_r w))^2 = \int_{B_{2r}} |\nabla^2 (\eta_r w)|^2 \ge \int_{B_r} |\nabla^2 w|^2$.

This completes the proof.

\end{proof}

Following lemma establishes an a-priori estimate for a strong solution of PDEs considered in Proposition \ref{lemma:localestimate}.
\begin{lemma} \label{lemma:strongestimate}
Suppose $w \in W^{4,2}(M)$ is a strong solution to the equation $\Delta^2 w + \delta (A ) dw +f = 0$ where $A$ is a smooth 2-tensor and $f \in L^2(M)$. Then, for  for sufficiently small $r$,  
$$||\nabla^2 w ||_{W^{2, 2}(B_{r})} \le  C_r (||f||_{L^2(B_{2r})} + ||w ||_{W^{1,2} ( B_{2r})}).$$
where $B_r$ and $B_{2r}$ are two concentric geodesic balls and $C_r$ is a constant depending on $r$ and $||A||_{L^{\infty}(B_{2r})}, ||\nabla A||_{L^{\infty}(B_{2r})}$.
\end{lemma}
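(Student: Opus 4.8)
The plan is to bootstrap elliptic regularity one derivative at a time, starting from the $W^{2,2}$-estimate of Proposition~\ref{lemma:energyestimate}. Since $w\in W^{4,2}(M)$ is a strong solution, $\Delta^2 w = -\delta(A)dw - f$ holds pointwise a.e., and the right-hand side lies in $L^2$ (because $A$ is smooth, $\nabla A$ is bounded, and $w\in W^{2,2}$ already gives $\delta(A)dw = \operatorname{div}(A\nabla w)\in L^2$ via $-\nabla A\cdot\nabla w - A\cdot\nabla^2 w$). First I would fix concentric balls $B_r\subset B_{3r/2}\subset B_{2r}$ with $r$ small compared to the injectivity radius, and apply interior $L^2$-elliptic regularity for the biharmonic operator (or equivalently, iterate the Calder\'on--Zygmund estimate for $\Delta$ twice): from $\Delta^2 w = h$ with $h\in L^2(B_{2r})$ one obtains $\|w\|_{W^{4,2}(B_r)}\le C_r(\|h\|_{L^2(B_{3r/2})}+\|w\|_{L^2(B_{3r/2})})$, where $C_r$ depends on $r$ and the $C^{1,\alpha}$-geometry of the metric (curvature bounds), which for a fixed background metric is just a fixed constant.

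The key steps, in order, are: (1) write $h\coloneqq \Delta^2 w = -\operatorname{div}(A\,dw)-f$ and expand $\operatorname{div}(A\,dw) = \langle\nabla A,\nabla w\rangle + \langle A,\nabla^2 w\rangle$ in local coordinates, so that $\|h\|_{L^2(B_{3r/2})}\le \|f\|_{L^2(B_{2r})} + \|\nabla A\|_{L^\infty(B_{2r})}\|\nabla w\|_{L^2(B_{2r})} + \|A\|_{L^\infty(B_{2r})}\|\nabla^2 w\|_{L^2(B_{2r})}$; (2) control the $\|\nabla^2 w\|_{L^2}$ term on a slightly larger ball by the energy estimate of Proposition~\ref{lemma:energyestimate} (applied on $B_{2r}$ versus $B_{3r/2}$, after shrinking radii appropriately), so that it is absorbed into $\|f\|_{L^2(B_{2r})}+\|w\|_{W^{1,2}(B_{2r})}$; (3) apply interior biharmonic $L^2$-regularity on $B_r\subset B_{3r/2}$ to get $\|w\|_{W^{4,2}(B_r)}\le C_r(\|h\|_{L^2(B_{3r/2})}+\|w\|_{L^2(B_{3r/2})})$; (4) chain these together. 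Note that the statement's conclusion $\|\nabla^2 w\|_{W^{2,2}(B_r)}$ is (up to lower-order terms controlled by the right-hand side) exactly $\|w\|_{W^{4,2}(B_r)}$, so Steps (1)--(4) suffice. The constant $C_r$ ends up depending on $r$, $\|A\|_{L^\infty(B_{2r})}$ and $\|\nabla A\|_{L^\infty(B_{2r})}$ as claimed.

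The only genuinely delicate point is the absorption in Step (2)--(3): one must be careful that the $\|\nabla^2 w\|_{L^2}$ appearing when estimating $h$ is on a ball strictly contained in the one where the $W^{4,2}$-bound is being proved, so that there is no circularity; this is handled by the standard device of a finite chain of nested balls $B_r\subset B_{r_1}\subset\cdots\subset B_{r_k}=B_{2r}$ with a Young-inequality absorption at each stage, or equivalently by the usual $L^2$ interior estimate stated with two concentric balls of comparable but distinct radii. Since we already have $w\in W^{4,2}$ globally (so all the integrals in question are finite), there is no issue of justifying the a priori integrability; this is purely an estimate, not an existence statement, which is what makes the bootstrapping legitimate. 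I expect this absorption/iteration to be the main technical obstacle, but it is entirely routine once the nested-ball structure is set up correctly, and everything else is a direct citation of interior elliptic $L^2$-theory for $\Delta^2$ combined with Proposition~\ref{lemma:energyestimate}.
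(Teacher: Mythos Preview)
Your proposal is correct and follows essentially the same approach as the paper. The paper views the equation as $\Delta(\Delta w) = -f + A_{ij}w_{ij} + A_{ij;i}w_j$, applies interior second-order $L^2$ elliptic theory to $\Delta w$, and then invokes the energy estimate (Proposition~\ref{lemma:energyestimate}) to absorb the $\|\nabla^2 w\|_{L^2}$ term into $\|f\|_{L^2}+\|w\|_{W^{1,2}}$; this is exactly your Steps (1)--(4), with your ``biharmonic interior regularity'' phrased equivalently as two iterations of the Laplacian estimate (as you yourself note).
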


\begin{proof}
We start from the identity $\Delta (\Delta w )  = - f + A_{ij}w_{ij} + A_{ij;i}w_j$
By the $L^2$- theory of second order elliptic PDE, and Proposition \ref{lemma:localestimate},
\begin{align*}
|| \Delta w ||_{W^{2,2}(B_r)}  \le & \, C_r (||f||_{L^2(B_{2r})} + ||A||_{L^{\infty}(B_{2r})} ||\nabla^2 w|||_{L^2 ( B_{2r})} + ||\nabla A||_{L^{\infty} ( B_{2r})}||\nabla w ||_{L^2 ( B_{2r})} + || \Delta w ||_{L^2(B_r)}  )\\
\le & C_r(||f||_{L^2(B_{2r})} + ||w||_{W^{2,2}(B_{2r})})   \\
\le & C_r (||f||_{L^2(B_{4r})} + ||w||_{L^{2}(B_{4r})}).
\end{align*}

Applying integral by parts forumla gives us the full control of $W^{4,2}(B_{r})$-norm of $w$.
\end{proof}

Now we are ready to prove the following compactness result for solutions of \ref{eq:ls}. Let $0<\alpha<1$.

\begin{theorem} \label{theorem:compactness}
There exists $C_{\alpha}>0$ such that $||w_t||_{C^{4, \alpha}} < C_{\alpha}$ for every solution $w_t$ of the equation \ref{eq:ls} where $C_\alpha$ independent of $0 \le t \le 1$.
\end{theorem}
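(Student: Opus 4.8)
The plan is to argue by contradiction, extracting a blow-up limit and using the normal-metric machinery (Theorems \ref{theorem:normal} and \ref{theorem:classification}) to rule it out. Suppose there were a sequence $t_n \to t_\infty$ and solutions $w_n \coloneqq w_{t_n}$ of \ref{eq:ls} with $\|w_n\|_{C^{4,\alpha}} \to \infty$. By Proposition \ref{lemma:localestimate} we already control $\int_M |\nabla w_n|^2$ and $\int_M (w_n - \overline w_n)^2$ uniformly, and the normalization $\int_M e^{4w_n}\,dV = 1$ together with Jensen pins down $\overline w_n$ from above; the obstruction to a uniform bound is that $\sup_M w_n \to +\infty$ (concentration). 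So set $L_n \coloneqq \max_M w_n \to \infty$, choose maximum points $p_n \to p_\infty$, and rescale: working in normal coordinates near $p_\infty$ and defining $\tilde w_n(x) \coloneqq w_n(\exp_{p_n}(e^{-L_n}x)) + \text{(a suitable additive constant)}$ on balls of radius $\sim e^{L_n}$, one checks that $\tilde w_n$ solves a fourth-order equation on $\R^4$ whose leading term converges to $\Delta^2$ (the lower-order coefficients of $P_4$, built from $R$ and $Ric$ of the fixed background metric, scale away like $e^{-2L_n}$) with right-hand side $2t_n k_p e^{4\tilde w_n} - 2t_n Q e^{-4L_n} \to 2t_\infty k_p e^{4 w_\infty}$.

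Next I would upgrade this to genuine local convergence. Using Proposition \ref{lemma:energyestimate} (the energy estimate) and Lemma \ref{lemma:strongestimate} bootstrapped with elliptic $L^p$ and Schauder theory applied to the rescaled equation, together with the uniform $L^2$-gradient and local $\fint_{B_r}|\nabla w_n|^2 \le C/r^2$ bounds transplanted to the rescaled picture, one obtains $\tilde w_n \to w_\infty$ in $C^4_{loc}(\R^4)$ for some $w_\infty$ with $w_\infty(0) = 0 = \max w_\infty$, $\int_{\R^4} e^{4 w_\infty} < \infty$, and $w_\infty$ solving $\Delta^2 w_\infty = 2t_\infty k_p e^{4 w_\infty}$ on $\R^4$. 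The crucial extra input is the constraint $R_n > 0$: the scalar-curvature equation $\Delta_0 w_n + |\nabla_0 w_n|^2 + \tfrac16 R_n e^{2w_n} = \tfrac16 R_0$ forces $\Delta_0 w_n + |\nabla_0 w_n|^2 \le \tfrac16 R_0$, and after rescaling and passing to the limit this yields that the scalar curvature of $e^{2w_\infty}|dx|^2$ is nonnegative (indeed $\ge 0$ at infinity). By Theorem \ref{theorem:normal}, $e^{2w_\infty}|dx|^2$ is a normal metric, so $w_\infty$ satisfies the integral equation \eqref{eq:class} with $m = 4$, and Theorem \ref{theorem:classification} gives $w_\infty(x) = \log\!\big(\tfrac{2\lambda}{\lambda^2 + |x - x_0|^2}\big)$; in particular $\int_{\R^4} Q_{w_\infty} e^{4 w_\infty} = 8\pi^2$, i.e. a full "bubble" of total $Q$-curvature $8\pi^2$ has escaped.

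Finally, this leads to a contradiction via a Pohozaev-type / total-curvature accounting: the mass $8\pi^2$ carried by the bubble must be accounted for inside $\int_M Q_{t_n}\,dV_{t_n} = k_p$ (which is conformally invariant and equal to the fixed value $k_p$), hence $k_p \ge 8\pi^2$ if one bubble forms, contradicting our standing assumption $k_p < 8\pi^2$ (recall from Section 2 that $k_p = 8\pi^2$ forces $M = S^4$, which is already excluded). Here one uses that away from the finitely many concentration points $w_n$ is uniformly bounded (standard $\varepsilon$-regularity for the fourth-order equation, which follows from the energy and strong estimates above once the local mass $\int_{B_r} k_p e^{4 w_n}$ is small), so no $Q$-curvature is lost in the "neck" or "body" regions. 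I expect the main obstacle to be the bootstrap/$\varepsilon$-regularity step: one must carefully track how the fixed-metric lower-order terms in $P_4$ behave under rescaling and show they do not interfere with the standard $\Delta^2$ blow-up analysis, and one must establish that concentration can only occur at finitely many points each carrying quantized mass $8\pi^2$ — this is exactly the content imported from \cite{M}, and reproving it here via the normal-metric route is where the real work lies. Once $C^0$ (equivalently $C^\alpha$) boundedness is in hand, Lemma \ref{lemma:strongestimate} together with Schauder estimates upgrades it to the claimed $C^{4,\alpha}$ bound.
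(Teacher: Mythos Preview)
Your blow-up strategy, the use of the constraint $R_t>0$ to verify normality via Theorem \ref{theorem:normal}, and the appeal to Xu's classification (Theorem \ref{theorem:classification}) all match the paper's proof. The difference is in the final contradiction, where you work harder than necessary.

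The paper obtains the contradiction directly from the \emph{volume} normalization rather than from $Q$-curvature accounting. Since $\int_M e^{4w_n}\,dV = 1$, the rescaling gives $\int_{B_{\delta/r_n}(0)} e^{4\tilde w_n} \le 1$, and passing to the limit yields $\int_{\R^4} e^{4w_\infty} \le 1$. On the other hand, after the shift $\hat w_\infty = w_\infty - \tfrac14\log(3/(t_\infty k_p))$ the classification gives $\hat w_\infty(x) = \log\bigl(\tfrac{2\lambda}{\lambda^2+|x-x_0|^2}\bigr)$, whence $\int_{\R^4} e^{4w_\infty} = \tfrac{8\pi^2}{t_\infty k_p} > 1$ (using $t_\infty \le 1$ and $k_p < 8\pi^2$). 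This is an immediate contradiction: no $\varepsilon$-regularity, no neck analysis, no finite-concentration-set argument is needed. Your mass-accounting route would also succeed---and in fact more easily than you fear, since $Q_{t_n} = t_n k_p + (1-t_n)Qe^{-4w_n} > 0$ pointwise already gives $k_p = \int_M Q_{t_n}e^{4w_n} \ge \int_{B_\delta(p_n)} Q_{t_n}e^{4w_n} \to 8\pi^2$ without any control on the complement---but the ``main obstacle'' you anticipate is illusory. One minor omission: to pass from $\sup w_t < C$ to a two-sided bound on $\overline w_t$ you need Jensen for the upper bound (which you note) and the Moser--Trudinger inequality for the lower bound (which the paper uses but you do not mention).
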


\begin{proof}
First we claim that it suffices to show $\sup w_t < C$ to prove the theorem. Assume $\sup w_t < C$. Then the right-hand side of \ref{eq:ls} is bounded above pointwisely. Note that $w_t-\overline{w}_t$ satiesfy the same equation \ref{eq:ls} with different normalization. We apply Lemma \ref{lemma:strongestimate} to $w_t-\overline{w}_t$ This gives us $||w_t-\overline{w}_t||_{W^{4,2}(M)}< C + C ||w-\overline{w}_t||_{W^{1,2}(M) }$. Lemma \ref{lemma:strongestimate} is a local estimate, but we can patch those estimates to get a global estimate. Proposition \ref{lemma:localestimate} gives us a uniform upper bounded for $ ||w-\overline{w}_t||_{W^{1,2}(M) }$. We need to prove that $\overline{w}_t$ is uniformly bounded. $w_t$ is bounded above as $\sup w_t < C$. From the Moser-Trudinger's inequality \cite[Theorem 1.2]{CY},

\begin{equation*}
k_p \log{\fint e^{4(w_t-\overline{w}_t)}} \le \int w_t P w_t  =  \int (w_t-\overline{w}_t) P_4 (w_t-\overline{w}_t) \le C.
\end{equation*} 
As $\int e^{4w_t} = 1$, we have $0 \le C + 4 k_p\log{\overline{w}_t}$ from the above inequality. Hence we have a uniform bound of $||w_t||_{W^{4,2}(M)}$. By Sobolev mbedding theorem, $||\nabla^2 w||_{L^p(M)} < C_p$ for any $2<p<\infty$. Applying $L^p$ estimate to $w_t$ as we applied $L^2$-estimate in Lemma \ref{lemma:strongestimate}, we see that $||\Delta w||_{W^{2,p}(M)} < C_p$. Again by the $L^p$-estimate, $||w||_{W^{4,2}(M)} < C_p$. By the Morrey's inequality, $||w||_{C^{2,\alpha}(M)} <C_\alpha$. We repeat the argument of Lemma \ref{lemma:strongestimate} by applying the Schauder's estimate instead of the$L^2$-estimate, to see $||\Delta w ||_{C^{2, \alpha}(M)} < C_\alpha$. The Schauder's estimate gives us $||w||_{C^{4, \alpha}(M)} < C_\alpha$.

Now we prove that $\sup w_t < C$. Assume the contrary. We choose $t_n \rightarrow t_\infty$, $w_n$ solutions of \ref{eq:ls} for $t_n$, $w_n(p_n) \rightarrow + \infty$, $p_n \rightarrow p$. 

For $\delta$ smaller than the injectivity radius, we use the exponential map and the dilation map to define a normalized sequence $\tilde{w}_n$. Specifically, let $\tilde{w}_n (x) = w_{t_n}(r_n x) + \log r_n$ where  $r_n$ is chosen to satisfy $1 = w_n(p_n) + \log r_n = w_n(0) + \log{r_n}$. Obviously, $r_n \rightarrow 0$ and $\tilde{w}_n \le 1$. The PDE for $\tilde{w}_n$ defined on a Euclidean ball $B_{\frac{\delta}{r_n}}(0)$ is as follows:

\begin{equation}
P_{\tilde{g}_n} \tilde{w}_n ( x) + 2 r_n^4 t_n Q(r_n x) = 2t_nk_p e^{4\tilde{w}_n(x)}
\end{equation}
where $\tilde{g}_n $ is the rescaled metric converging to the Euclidean metric. Note that we have $\int_{B_{\frac{\delta}{r_n}}(0)} e^{4\tilde{w}_n} = \int_{B_{\delta}(x_n)} e^{4w_n} \le 1$.

Let $\rho_n = \frac{\delta}{r_n}$. Obviously, $\rho_n \rightarrow +\infty$. For a fixed $\rho>0$ choose suitably large $n$ such that $\rho_n>2\rho$. By Proposition \ref{lemma:localestimate} and the scaling argument, $\int_{B_{2\rho}(0)} |\nabla \tilde{w}_n | ^2 = \frac{\int_{B_{2\rho r_n}(0)}|\nabla w_n|^2}{r_n^2} \le C_\rho \rho^2$. From \Poincare 's inequality, we have $\int_{B_{2\rho}(0)} |\tilde{w}_n - \fint_{B_{\rho}(0)} \tilde{w}_n|^2 \le C_\rho \rho^4$. Now by applying Proposition 4.2 and Lemma \ref{lemma:strongestimate} to $\tilde{w}_n - \fint_{B_{\rho}(0)} \tilde{w}_n$, we have $||\tilde{w}_n - \fint_{B_\rho(0)} \tilde{w}_n||_{W^{4,2}(B_\rho(0))}< C_\rho$. In particular this inequality gives us an uniform $C^{1, \beta}(B_\rho(0))$-norm bound on $\tilde{w}_n - \fint_{B_{\rho}(0)} \tilde{w}_n$  for some $\beta>0$. Since $\sup \tilde{w}_n = \tilde{w}_n(0) = 1$, and $||\nabla \tilde{w}_n||_{L^{\infty}(B_\rho (0))} = ||\nabla(\tilde{w}_n - \fint_{B_{\rho}(0)} \tilde{w}_n)||_{L^{\infty}(B_\rho (0))}$ is bounded, this yields the uniform bound on $W^{4,2}(B_\rho(0))$ norm for $\tilde{w}_n$ on $B_\rho(0)$.

From the above paragraph, we immediately see that there exists $w_\infty \in W_{\mathrm{loc}}^{4,2} (\mathbb{R}^n)$ s.t. $w_n \rightarrow w_\infty$ in $W^{3,2}(B_\rho(0))$ and $w_n \rightharpoonup w_\infty$ in $W^{4,2}(B_\rho(0))$ for every $\rho>0$. It is easy to see that $w_\infty$ is a weak solution of the PDE $\Delta^2 w_\infty = 2t_\infty k_p e^{4w_\infty}$ with $\int e^{4w_\infty} \le 1$, $\sup w_\infty = 1$, or
\begin{equation*}
\Delta^2 \big(w_\infty-\frac{\log{3/(t_\infty k_p)}}{4}\big) = 6 e^{4[(w_\infty- (\frac{\log{3/(t_\infty k_p)}}{4})]}.
\end{equation*}

Let $\hat{w}_\infty = w_\infty-\frac{\log{3/(t_0k_p)}}{4}$.  As $R_\infty$ is non-negative and $\int_{\mathbb{R}^4} |Q_\infty| e^{4w_\infty}<\infty$, by Theorem \ref{theorem:normal}, $\hat{w}_\infty$ is normal.  Hence we have the following integral representation of $\hat{w}_\infty$

\begin{equation*}
\hat{w}_\infty(x) = \frac{3}{4\pi^2} \int_{\mathbb{R}^4} \log \big(\frac{|y|}{|x-y|} \big) e^{4\hat{w}_\infty(y)}dy + C_0.
\end{equation*}

By Theorem \ref{theorem:classification}, we know that the solution to the above integral equation is $\hat{w}_\infty (x) =  \log{(\frac{2\lambda}{\lambda^2 + |x|^2}})$ for some $\lambda >0$. Then, we have $\frac{8\pi^2}{3} = \int_{\mathbb{R}^4} e^{4\hat{w}_\infty} =  \int_{\mathbb{R}^4} e^{4w_\infty} \cdot (\frac{t_\infty k_p}{3})$, or $ \int_{\mathbb{R}^4} e^{4w_\infty} = \frac{8\pi^2}{t_\infty k_p} > 1$ as $t\le 1$ and $k_p <8\pi^2$. This contradicts $\int_{\mathbb{R}^4} e^{4w_\infty} \le 1$.

\end{proof}

\begin{remark}
The conclusion of Theorem \ref{theorem:compactness} remains valid under the normalization $\int w_t dV= 0$. This can be easily observed by the shifting solutions by appropriate constants.
\end{remark}

\section{Proof of Theorem \ref{thm:main1}}

\begin{theorem}\label{thm:compactthm}
Let $(M, g)$ be a four-dimensional compact Riemannian manifold with conformal invariants $k_p$ and $Y(M, [g])$ positive. Then there exists a conformal deformation $w$ such that $Q_w$ is constant, and $R_w>0$.
\end{theorem}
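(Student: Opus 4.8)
The plan is to realize the metric of Theorem \ref{thm:compactthm} as the solution at $t=1$ of the family \ref{eq:ls}, obtained by a Leray--Schauder degree argument with the a priori bound supplied by Theorem \ref{theorem:compactness}. By Proposition \ref{lemma:QR} one may assume from the outset that the background metric $g$ satisfies $Q,R>0$ and $\mathrm{Vol}(M,g)=1$, so that $\int_M Q\,dV=k_p$; recall also that $P_4$ is non-negative with $\ker P_4=\{\mathrm{constants}\}$ by \cite[Theorem A]{G2}, hence invertible on the space $C^{\alpha}_0(M)$ of zero-mean H\"older functions. I would then introduce the operator $T_t\colon C^{4,\alpha}(M)\to C^{4,\alpha}(M)$, $T_t(w)=v_0+\gamma_t(w)$, where $v_0$ is the zero-mean solution of
\[
P_4 v_0 = 2tk_p e^{4w}-2tQ-\int_M\!\big(2tk_p e^{4w}-2tQ\big)\,dV
\]
(the right-hand side has zero mean because $\mathrm{Vol}(M,g)=1$ and $\int_M Q\,dV=k_p$) and $\gamma_t(w)=-\tfrac14\log\int_M e^{4v_0}\,dV$ is the constant forcing $\int_M e^{4T_t(w)}\,dV=1$. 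Elliptic regularity makes $T_t$ compact and continuous in $t$, and a short computation shows that $w=T_t(w)$ exactly when $\int_M e^{4w}\,dV=1$ and $P_4 w=2tk_pe^{4w}-2tQ$; thus fixed points of $T_t$ are precisely the solutions of \ref{eq:ls} (for $t>0$ the volume normalization is automatic, since $\int_M P_4 w\,dV=0$). Finally, using $\mathrm{Vol}(M,g)=1$ one checks $T_0\equiv 0$.

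The point of the argument is to run the degree on a domain that builds in the constraint $R_t>0$. With $C_\alpha$ the constant of Theorem \ref{theorem:compactness}, I would take
\[
U=\big\{\,w\in C^{4,\alpha}(M)\;:\;\|w\|_{C^{4,\alpha}}<C_\alpha+1,\ R_w>0\text{ on }M\,\big\},
\]
an open set containing $0$, and check that $T_t$ has no fixed point on $\partial U$ for any $t\in[0,1]$. A fixed point is a solution $w$ of \ref{eq:ls}, so Theorem \ref{theorem:compactness} rules out $\|w\|_{C^{4,\alpha}}=C_\alpha+1$; and if $R_w\ge 0$ with $R_w(p)=0$ at some $p\in M$, then the $Q$-curvature equation of \ref{eq:ls} rearranges, in dimension four, to
\[
-\Delta_w R_w+\tfrac14 R_w^2 = 3|E_w|^2+12tk_p+12(1-t)Q e^{-4w},
\]
$E_w$ denoting the traceless Ricci tensor of $e^{2w}g$; the right-hand side is everywhere positive since $Q>0$ and $k_p>0$, while the left-hand side is $\le 0$ at the interior minimum $p$ of $R_w$, a contradiction, exactly as in the proof of Proposition \ref{lemma:QR}. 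Granting this, $\deg(I-T_t,U,0)$ is well-defined and, by homotopy invariance, independent of $t$; since $T_0\equiv 0$ and $0\in U$ it equals $\deg(I,U,0)=1$. Taking $t=1$ then yields a fixed point $w\in U$ of $T_1$, which solves $P_4 w=2k_pe^{4w}-2Q$ with $\int_M e^{4w}\,dV=1$ and $R_w>0$; consequently $2Q_we^{4w}=P_4 w+2Q=2k_pe^{4w}$, so $Q_w\equiv k_p=k_p/\mathrm{Vol}(M,g_w)$ is constant while $R_w>0$, which is the conclusion of Theorem \ref{thm:compactthm}.

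The step I expect to be the genuine obstacle is Theorem \ref{theorem:compactness} itself --- the uniform $C^{4,\alpha}$ estimate via the blow-up/normal-metric analysis and the classification of Theorem \ref{theorem:classification}; once that is in hand the rest is the bookkeeping above, whose only delicate point is the absence of solutions on $\partial U$, guaranteed by the maximum-principle identity for $R_w$. It is worth stressing why a degree argument is needed instead of a continuity method in $t$: the linearization of \ref{eq:ls} at a solution $w_t$ is $\phi\mapsto P_4\phi-8tk_p e^{4w_t}\phi$, which may acquire a nontrivial kernel as $8tk_p$ crosses an eigenvalue of $P_4$, so the openness step of a naive continuity argument can fail --- this is the phenomenon the example at the end of this section illustrates --- and the constrained degree argument on $U$ circumvents it.
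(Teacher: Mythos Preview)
Your proposal is correct and follows essentially the same approach as the paper: reduce to a background with $Q,R>0$ via Proposition \ref{lemma:QR}, set up the Leray--Schauder degree on an open set incorporating both the $C^{4,\alpha}$ bound from Theorem \ref{theorem:compactness} and the constraint $R_w>0$, and exclude fixed points on the boundary using the compactness theorem together with the strong maximum principle applied to the equation $-\Delta_w R_w+\tfrac14 R_w^2=3|E_w|^2+12Q_{w}>0$. The only cosmetic difference is normalization: the paper works in the zero-mean slice $C^{4,\alpha}_0(M)$ with $F_t(u)=2tP_4^{-1}(k_pe^{4u}-Q)$ (invoking the remark after Theorem \ref{theorem:compactness} for the $\int w=0$ normalization), whereas you stay in the full space and force $\int e^{4w}=1$ by adding the constant $\gamma_t(w)$; the two setups are equivalent by a shift.
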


\begin{proof}
By Proposition \ref{lemma:QR}, we may assume that the background metric satisfies $Q, R>0$. We will apply Leray-Schauder degree theory to a 1-parameter family of equations \ref{eq:ls} but with different normalization $\int w_t dV_g = 0$. Let $\mathcal{O} = \{u \in C^{4, \alpha}(M) | R_u > 0, ||u||_{C^{4, \alpha}} <C_\alpha, \int_M u dV_g = 0 \}$ where $R_u$ denotes the scalar curvature of the metric $g_u \coloneqq e^{2u} g$, and $C_\alpha$ is the constant appearing in the remark after Theorem \ref{theorem:compactness}. $\mathcal{O}$ is a bounded open subset of the Banach space $C_0^{4, \alpha}(M) \coloneqq \{ f \in C^{4, \alpha}(M) | \int f = 0 \}$.

We define operators $F_t : C_0^{4, \alpha}(M) \rightarrow C_0^{4, \alpha}(M)$ by $F_t (u) = 2tP^{-1}(k_p e^{4u} - Q)$ where $t$ runs from 0 to 1.  $F_t$ is a continuous 1-parameter family of compact operators by the regularity argument discussed in the proof of Theorem  \ref{theorem:compactness}. Solving \ref{eq:ls} is equivalent to solving equations $(Id - F_t)(w_t) = 0$. At $t = 0$, $F_0 \equiv 0$ and $w_0 \equiv 0$ is the unique solution. Thus, the degree of $(Id-F_t)$ in $\overline{\mathcal{O}}$ at the point $0 \in  C_0^{4, \alpha}(M)$ is 1. 

The degree of maps $(Id-F_t)$ in $\overline{\mathcal{O}}$ at the point $0$ is well-defined and homotopy invariant if $0\notin (Id-F_t)(\partial \mathcal{O})$. Suppose $w_t \in \partial \mathcal{O}$ is a solution to the equation $(Id-F_t)(w_t)=0$. $w_t \in \partial \mathcal{O}$ implies either $ ||w_t||_{C^{4, \alpha}} = C_\alpha$, or $R_{w_t} \ge 0$ and $R_{w_t}(p_0) = 0$ for some $p_0 \in M$. The first case is obviously a contradiction to the Theorem \ref{theorem:compactness}. The second case contradicts the strong maximum principle applied to $Q_{w_t}>0$. Hence the degree of a map $(Id-F_1)$ in $\overline{\mathcal{O}}$ at the point $0$ is also 1, and there exists a solution to the equation \ref{eq:ls} at $t=1$.

\end{proof}

In order to apply the method of continuity to prove Theorem \ref{thm:main1}, we need to show that the kernel of the linearized operator $\tilde{L}_t$ of \ref{eq:ls} is $\{ 0 \}$. For  smooth $\phi$ with $\int_M \phi dV_t =0$, $\tilde{L}_t$  at a solution $w_t$ is given by
$$\tilde{L}_t (\phi) = (P_4)_t \phi -8tk_p \phi.$$
As $\tilde{L}_t$ is non-negative operator at $t = 0$, a reasonable approach is to prove that $\tilde{L}_t$ is non-negative for $t \in [0, 1]$. This is equivalent to showing that the first non-zero eigenvalue of $\tilde{L}_t$ is greater or equal to $8tk_p$. Note that  $Q_t = tk_p + (1-t)Q e^{-4w_t} \ge tk_p$. Hence one would try to estimate a lower bound on the first eigenvalue of when the $Q$-curvature is bounded below by a positive constant, and the scalar curvature is positive.

When $(M, g)$ is a 2-dimensional Riemannian surface, the Gaussian curvature is a curvature quantity analogous to the $Q$-curvature, and such estimate is true by Lichnerowicz and Obata's theorem. For $\sigma_2$-curvature, which is another type of conformal curvature that is widely studied, there is a similar result by Gursky and Streets \cite{GS}. 

For a four-dimensional Riemannian manifold $(M^4, g)$, we denote the Schouten tensor by $A = \frac{1}{2}(Ric - \frac{1}{6} Rg)$. Then the $\sigma_2$-curvature is defined as $\sigma_2(A) = -\frac{1}{2} |E|^2 + \frac{1}{24} R^2$. Observe that $Q = -\frac{1}{12} \Delta R + \frac{1}{2} \sigma_2(A)$.
\begin{proposition}\cite[Corollary 3.15]{GS}\label{lemma:gs}
Let $(M^4, g)$ be a closed Riemannian manifold such that the scalar curvature $R$ and the $\sigma_2$-curvature is positive.  Given $\phi \in C^{\infty}(M)$ such that $\int_M \phi dV = 0$, then
$$ \int_M \frac{1}{\sigma_2 (A_g)} T_1(A_g)^{ij} \nabla_i \phi \nabla_j \phi \ge 4 \int_M \phi^2 dV_g.$$
where $T_1$ is the Newton transform. The equality holds if and only if $\phi \equiv 0$ or $(M^4, g)$ is isometric the round sphere.
\end{proposition}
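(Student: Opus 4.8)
The plan is to recast the asserted inequality as a lower bound for the first nonzero eigenvalue of a weighted second order elliptic operator and then to run an Obata--Lichnerowicz type argument, following \cite{GS}. Since $R>0$ one has $\sigma_1(A_g)=\tfrac16 R>0$, and together with $\sigma_2(A_g)>0$ this places $A_g$ in the G\r{a}rding cone $\Gamma_2^+$; a standard property of the Newton transform then gives that $T_1(A_g)=\sigma_1(A_g)g-A_g$ is positive definite on all of $M$. Consequently $W\coloneqq \tfrac{1}{\sigma_2(A_g)}T_1(A_g)$ is a smooth, positive definite symmetric $2$-tensor, and $\mathcal{L}\phi\coloneqq-\operatorname{div}(W\,\nabla\phi)$ is a self-adjoint, uniformly elliptic operator on $(M,g)$ whose kernel consists of the constants. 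By the variational characterization of the spectrum, proving the inequality is equivalent to proving that the first nonzero eigenvalue $\lambda_1(\mathcal{L})$ satisfies $\lambda_1(\mathcal{L})\ge 4$, and equality in the inequality is attained exactly when a first eigenfunction is extremal; so it suffices to establish this eigenvalue bound together with its rigidity.

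Fix a first eigenfunction $u$, that is $\mathcal{L} u=\lambda_1 u$ with $\int_M u\,dV_g=0$. The structural fact that makes the Obata argument go through is that $T_1(A_g)$ is divergence free: this is precisely the contracted second Bianchi identity $\operatorname{div}A_g=d\,\sigma_1(A_g)$, which yields $\operatorname{div}T_1(A_g)=d\sigma_1(A_g)-\operatorname{div}A_g=0$. Feeding this into a weighted Bochner--Reilly computation applied to $u$, the divergence-free property prevents uncontrolled derivatives of the curvature from appearing, and one is left with an integral identity of the schematic shape
\begin{equation*}
\Big(1-\tfrac14\Big)\lambda_1^2\int_M u^2\,dV_g\;\ge\;\int_M \frac{1}{\sigma_2(A_g)}\,\mathcal{Q}(A_g)(\nabla u,\nabla u)\,dV_g,
\end{equation*}
where the factor $\tfrac14$ comes from the Cauchy--Schwarz inequality $(\operatorname{tr}_W\nabla^2u)^2\le 4\,|\nabla^2 u|_W^2$ for symmetric forms in four variables, and $\mathcal{Q}(A_g)$ is an algebraic curvature expression built from $A_g$. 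The decisive, dimension-four specific input is the pointwise inequality $\tfrac{1}{\sigma_2(A_g)}\mathcal{Q}(A_g)\ge 3\,W$ on the cone $\Gamma_2^+$ (a MacLaurin/G\r{a}rding type estimate relating $\sigma_1$, $\sigma_2$ and $T_1$), which reduces the display to $(1-\tfrac14)\lambda_1^2\ge 3\lambda_1$, hence $\lambda_1\ge 4$. A useful sanity check: on the round $S^4$ one has $W=g$ and $\mathcal{L}=-\Delta$, whose first eigenvalue is $4$, so the constant is sharp and the round sphere is the expected equality case.

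For the rigidity, if $\lambda_1=4$ then both pointwise inequalities above must be equalities. Equality in the Cauchy--Schwarz step forces $\nabla^2 u$ to be pointwise a multiple of $g$; after normalization this is Obata's equation $\nabla^2 u=-u\,g$, which already forces $(M,g)$ to be isometric to a round sphere, and equality in the G\r{a}rding step then pins down $A_g$ consistently with constant curvature. The only remaining alternative is $u\equiv 0$, i.e. $\phi\equiv 0$ in the original inequality. The step I expect to be the main obstacle is the weighted Bochner--Reilly computation together with the extraction of the \emph{sharp} constant $4$ from the algebra of $\Gamma_2^+$ in dimension four; this is exactly what is carried out in \cite[Section 3]{GS} (via the convexity of the associated conformal energy), to which we refer for the details.
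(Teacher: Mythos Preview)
The paper does not give its own proof of this proposition; it is simply quoted from \cite[Corollary~3.15]{GS} as contextual motivation (an Andrews-type inequality for $\sigma_2$ suggesting an analogous question for $Q$-curvature). There is therefore no in-paper argument to compare against.

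On the merits of your sketch: the reformulation as a first-eigenvalue bound $\lambda_1(\mathcal{L})\ge 4$ for $\mathcal{L}\phi=-\operatorname{div}(W\nabla\phi)$, together with the observation that $T_1(A_g)$ is divergence-free by the contracted second Bianchi identity, is correct and is indeed the natural Obata--Lichnerowicz/Andrews template. The gap is that the decisive step is only schematic. You do not actually carry out the weighted Bochner--Reilly computation; the curvature quantity $\mathcal{Q}(A_g)$ is never defined; and the claimed pointwise estimate $\tfrac{1}{\sigma_2(A_g)}\mathcal{Q}(A_g)\ge 3W$ on $\Gamma_2^+$ is asserted without any derivation. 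Whether a direct Bochner argument yields the sharp constant $4$ here is exactly the nontrivial point, and you have not shown it.

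There is also an internal inconsistency: your final sentence concedes that the proof in \cite{GS} actually proceeds ``via the convexity of the associated conformal energy'', i.e.\ by a second-variation computation in the formal Riemannian structure on the conformal class, not by the direct Bochner route you outlined. So as written your proposal is a plausible plan that defers the hard step to a reference whose method is different from the one you sketched; it is not yet a proof.
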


The above inequality is inspired by Andrews' inequality, which is proved in his unpublished work.
\begin{proposition}\cite[pg. 517]{CLN}
Let $(M^m, g)$ be a closed riemannian manifold with positive Ricci curvature. Given $\phi \in C^\infty (M)$ such that $\int_M \phi dV = 0$, then
$$\frac{m}{m-1} \int_M \phi^2 dV \le \int_M (Ric^{-1} ) ^{ij} \nabla_i \phi \nabla_j \phi.$$
\end{proposition}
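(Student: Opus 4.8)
The plan is to deduce the inequality from the classical Lichnerowicz--Bochner estimate by passing to the Poisson potential of $\phi$ and then pairing it back against $\phi$ through two Cauchy--Schwarz steps. Assuming $\phi\not\equiv 0$ (otherwise both sides vanish), the hypothesis $\int_M\phi\,dV=0$ and elliptic theory on the closed manifold $M$ produce a unique smooth function $u$ with $\Delta u=\phi$ and $\int_M u\,dV=0$. Since $Ric>0$ everywhere, the tensor fields $Ric^{1/2}$ and $Ric^{-1/2}$ are smooth and symmetric, and it is precisely this square-root splitting that converts a weighted quadratic form in $\nabla\phi$ into an unweighted one.

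First I would run the standard integrated Bochner identity for $u$. From
$$\tfrac12\Delta|\nabla u|^2=|\nabla^2u|^2+\langle\nabla\Delta u,\nabla u\rangle+Ric(\nabla u,\nabla u),$$
integrating over $M$ and using $\int_M\langle\nabla\phi,\nabla u\rangle\,dV=-\int_M\phi\,\Delta u\,dV=-\int_M\phi^2\,dV$, one obtains $\int_M\phi^2\,dV=\int_M|\nabla^2u|^2\,dV+\int_M Ric(\nabla u,\nabla u)\,dV$. Combining this with the pointwise trace inequality $|\nabla^2u|^2\ge\frac1m(\Delta u)^2=\frac1m\phi^2$ gives the key auxiliary bound $\int_M Ric(\nabla u,\nabla u)\,dV\le\frac{m-1}{m}\int_M\phi^2\,dV$.

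The second step is to recover $\int_M\phi^2\,dV$ from the pairing of $\nabla\phi$ with $\nabla u$. Writing $\langle\nabla\phi,\nabla u\rangle=\langle Ric^{-1/2}\nabla\phi,\,Ric^{1/2}\nabla u\rangle$ and applying the Cauchy--Schwarz inequality pointwise, then Cauchy--Schwarz on $L^2(M)$, and finally inserting the auxiliary bound from Step 1, I would arrive at
$$\int_M\phi^2\,dV=\Big|\int_M\langle\nabla\phi,\nabla u\rangle\,dV\Big|\le\Big(\int_M(Ric^{-1})(\nabla\phi,\nabla\phi)\,dV\Big)^{1/2}\Big(\tfrac{m-1}{m}\int_M\phi^2\,dV\Big)^{1/2}.$$
Dividing by $\big(\int_M\phi^2\,dV\big)^{1/2}>0$ and squaring yields exactly $\frac{m}{m-1}\int_M\phi^2\,dV\le\int_M(Ric^{-1})(\nabla\phi,\nabla\phi)\,dV$.

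There is no serious analytic obstacle in this argument; the only delicate point is the sharp constant $\frac{m}{m-1}$, which enters solely through the trace inequality $|\nabla^2u|^2\ge\frac1m(\Delta u)^2$ in Step 1. If one wishes to track the equality case as well, the conditions $\nabla^2u=\frac1m(\Delta u)g$ and proportionality of $Ric^{-1/2}\nabla\phi$ and $Ric^{1/2}\nabla u$ together force $g$ to be the round metric; otherwise the proof is entirely robust and I expect it to go through without complications.
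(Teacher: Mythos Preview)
Your argument is correct: solving $\Delta u=\phi$, applying the integrated Bochner formula together with the trace inequality $|\nabla^2 u|^2\ge \tfrac{1}{m}(\Delta u)^2$ to get $\int_M Ric(\nabla u,\nabla u)\,dV\le\tfrac{m-1}{m}\int_M\phi^2\,dV$, and then closing the loop via the weighted Cauchy--Schwarz splitting $\langle\nabla\phi,\nabla u\rangle=\langle Ric^{-1/2}\nabla\phi,Ric^{1/2}\nabla u\rangle$ is exactly the standard proof of Andrews' inequality.

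Note, however, that the paper does \emph{not} supply its own proof of this proposition; it is simply quoted from \cite[pg.~517]{CLN} as background motivation for the discussion surrounding (\ref{lemma:ben1}). So there is nothing in the paper to compare your approach against. Your write-up is essentially the argument one finds in the cited reference (the Reilly/Bochner route), and it goes through without issue. The only caveat is your parenthetical remark on the equality case: the condition $\nabla^2 u=\tfrac{1}{m}(\Delta u)\,g$ with $u$ nonconstant forces $(M^m,g)$ to be isometric to a round sphere by Obata's theorem, but this is tangential to the inequality itself.
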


One may consider an inequality similar to Andrews' inequality for a four-dimensional Riemannian manifold $(M,g)$ with positive scalar curvature and $Q$-curvature.
\begin{equation}\label{lemma:ben1}
\int P_4 \phi \cdot \phi \ge 8\int Q\phi^2 \text{ or } \int (\frac{P_4 \phi \cdot \phi}{Q}) \ge 8\int \phi^2
\end{equation}
for smooth $\phi$ such that $\int_M \phi dV = 0$.
While those two inequalities are true if $(M, g)$ is an Einstein manifold, they are not true in general.

\begin{exmp}
(\ref{lemma:ben1}) is false for some manifold $(S^4, g)$ where $g$ is a perturbed metric of the standard metric $g_c$.
\end{exmp}

\begin{proof}
We embed $S^4$ in $\mathbb{R}^5$ as usual. Let $x_1$ be a coordinate function. Recall that $f_1 = x_1$ and $f_2 = 5x_1^2 -1$ are the first and second eigenfunctions of the Laplace-Beltrami operator ,respectively.  Let $\overline{g} = e^{2f_2t} g_c$ for some sufficiently small $t$.
We will show that for some small $t$, $\int \overline{P}_4 f_1 \cdot f_1 dVol_{\overline{g}} < 8 \int \overline{Q} f_1^2 dVol_{\overline{g}}$.

From the conformal covariance of the Paneitz operator,
\begin{equation*}
\int \overline{P}_4 f_1 \cdot f_1 dVol_{\overline{g}} = \int P_4f_1 \cdot f_1 dVol_{g_c} = 8 Q\int f_1^2 dVol_{g_c}
\end{equation*}
\begin{equation*}
8\int \overline{Q} f_1^2 dVol_{\overline{g}} = \int (4t P_4 f_2 + 8Q) f_1^2 dVol_{g_c}
\end{equation*}
Note that $\int f_2 dVol_{\overline{g}} = \int f_1 e^{4tf_2} dVol_{g_c} = 0 $ as $f_1$ is an odd function while $f_2$ is an even function.
Now we compute :
\begin{align*}
\int \overline{P}_4 f_1 \cdot f_1 dVol_{\overline{g}} - 8 \int \overline{Q} f_1^2 dVol_{\overline{g}} & = 8Q\int f_1^2 dVol_{g_c} - \int(4t P_4 f_2 + 8Q) f_1^2 dVol_{g_c} \\
& = -4t \int P_4 f_2 \cdot f_1^2 dVol_{g_c} \\
& = -4t \int P_4 (5x_1^2 -1 ) \cdot x_1^2 dVol_{g_c}  \\
& = -\frac{4t}{5} \int P_4 (5x_1^2 -1 ) \cdot (5x_1^2 -1 ) \\
\end{align*}
The value of the last line is positive as $f_2$ is a eigenfuction of the laplacian. If we choose $t$ to be positive, the assertion is proved. Also note that we can also perturb $\overline{g}$ further to make it stay out of the conformal class $[g_c]$. The same method gives a counter example to the second inequality of (\ref{lemma:ben1}).

\end{proof}

\begin{remark}
If one can prove either of (\ref{lemma:ben1}) for $(M, g)$ with positive scalar curvautre and constant positive $Q$-curvature, then the solution to \ref{eq:ls} at $t=1$ is unique by degree theory.
\end{remark}


\part{Manifolds with umbilic boundary}
\section{Preliminaries}

Throughout Sections 6 and 7, we consider critical points of linear combinations of some functionals that were studied in \cite{CQ1} and \cite{CQ2} in the case of  manifolds with boundary. Let $(M, \partial M, g_0)$ be a four-dimensional Riemannian manifold with a totally geodesic boundary. Note that we can always conformally deform a manifold with an umbilic boundary to a manifold with a totally geodesic boundary.

We assume $k_{(P_4, P_3)} = \int Q + \oint T > 0 $. and $Y(M, \partial M, [g_0]>0$. By \cite[Lemma 5.2]{CN}, we have $k_{(P_4, P_3)} \le 4\pi^2$ and $k_p =4\pi^2$ if and only if $M$ is conformally equivalent to $S^4_+$ with the standard metric. Therefore, we may assume $k_{(P_4, P_3)} < 4\pi^2$. In addition, by \cite[Theorem 6.1]{E}, we can assume that the bacground metric $R_0$ is the boundary Yamabe metric and that the boundary is totally geodesic. In particular, $R_0$ is constant.
For manifolds with a totally geodesic boundary, we have simplified expressions for the functionals considered in \cite{CQ1} and \cite{CQ2}. The expressions (\ref{eq:p3operator}) and (\ref{eq:tcurvature}) are simplified to $P_3w = \frac{1}{2} \frac{\partial \Delta w}{\partial n}$ for $\frac{\partial w}{\partial n} = 0$, and $T = -\frac{1}{12}\frac{\partial R}{\partial n}$.

Let $\eta_1$ be a fixed (2,0) tensor defined on $M$ with $|\eta_1| >0$ everywhere. In addition, let $\eta_2$ be a fixed $(2, 0)$-tensor defined on $\partial M$ with $|\eta_2|_{g_0|_{\partial M}}>0$ everywhere on $\partial M$.

\begin{align*}\label{eq:bfunctional}
I[w] & = \int 4|\eta_1|_0^2 wdV_0 - (\int |\eta_1|_0^2 dV_0) \log{\fint_M e^{4w} dV_0}\\ 
I_b[w] & = \oint 3|\eta_2|_0^{\frac{3}{2}} w dV_0 - (\oint |\eta_2|_0^{\frac{3}{2}} dV_0 ) \log{\fint_{\partial M} e^{3w} dV_0} \\
II_b[w] & =  \int \big( w P_4 w + 4Q_0 w\big) dV_0  + \oint_{\partial M} \big(2w P_3 w +  4T_0 w \big) dS_0 \\
& = \int \big[(\Delta_0 w)^2 + \frac{2}{3}R_0 |\nabla_0 w|^2 - 2 Ric_0 (\nabla_0 w, \nabla_0 w) +4 Q_0 w \big] dV_0 + \oint_{\partial M} 4T_0w dS_0\\
III[w] & = \frac{1}{36}(\int R_w^2 dV_w -\int R_0^2 dV_0) = \int \big[(\Delta_0w + |\nabla_0 w|^2)^2 - \frac{1}{3}R_0 | \nabla_0 w|^2 \big]dV_0
\end{align*}

Here, we have used the same notations for $I[w], III[w]$ because the expressions are the same as in the closed manifold case.
Also note that $T_0 \equiv 0$ as $R_0$ is constant.
\bigskip

We recall the following useful lemma for our computations from now on. Note that $R_0$ is constant in our case.

\begin{lemma}(\cite[Lemma 2.10]{CN}) \label{lemma:boundarygeo}
Let $(M, \partial, g_0)$ be a Riemannian manifold with totally geodesic boundary. If $u$ is a $C^2$ function with $\frac{\partial u}{\partial n_0} = 0$, then $\frac{\partial |\nabla_0 u|^2}{\partial n_0} = 0$, and $Ric_0(n_0, \nabla_0 u) = 0$ on the boundary $\partial M$.
\end{lemma}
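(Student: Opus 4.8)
The plan is to reduce everything to the well-known commutation formulas for differentiating geometric quantities across a boundary, together with the hypothesis that the boundary is totally geodesic (so the second fundamental form $L$ vanishes) and that $u$ has vanishing normal derivative. Fix a point $p\in\partial M$ and work in Fermi (boundary normal) coordinates: choose $\{x^1,x^2,x^3\}$ geodesic normal coordinates on $\partial M$ at $p$ and let $x^4$ be the signed distance to the boundary along inward geodesics, so that $g_0 = (dx^4)^2 + \hat g_{ab}(x)\,dx^a dx^b$ with $a,b$ ranging over $1,2,3$, and $n_0 = \partial_{x^4}$. In these coordinates the Christoffel symbols satisfy $\Gamma^4_{ab} = -\tfrac12\partial_4 \hat g_{ab}$ and $\Gamma^a_{4b} = \tfrac12 \hat g^{ac}\partial_4 \hat g_{cb}$, and the second fundamental form of the boundary is $L_{ab} = -\tfrac12 \partial_4 \hat g_{ab}$ (up to sign convention). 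Since the boundary is totally geodesic, $L_{ab} = 0$ at $p$, hence $\partial_4 \hat g_{ab}(p) = 0$; also $\partial_c \hat g_{ab}(p) = 0$ by the choice of normal coordinates on the boundary.

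First I would prove $\frac{\partial |\nabla_0 u|^2}{\partial n_0} = 0$. Write $|\nabla_0 u|^2 = g_0^{ij}\partial_i u\,\partial_j u = (\partial_4 u)^2 + \hat g^{ab}\partial_a u\,\partial_b u$. Differentiating in $x^4$ gives
\begin{equation*}
\partial_4 |\nabla_0 u|^2 = 2\partial_4 u\,\partial_4\partial_4 u + (\partial_4 \hat g^{ab})\partial_a u\,\partial_b u + 2\hat g^{ab}(\partial_4\partial_a u)\partial_b u.
\end{equation*}
Evaluate at $p$ on the boundary. The hypothesis $\partial u/\partial n_0 = 0$ along $\partial M$ means $\partial_4 u \equiv 0$ on $\partial M$, so the first term vanishes and, since $\partial_4 u$ vanishes identically along the boundary, its tangential derivatives $\partial_a(\partial_4 u) = \partial_4\partial_a u$ vanish at $p$ too, killing the third term. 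Finally $\partial_4 \hat g^{ab}(p) = 0$ because $\partial_4 \hat g_{ab}(p)=0$ (totally geodesic), so the middle term vanishes. Hence $\partial_4|\nabla_0 u|^2(p) = 0$. Since $p$ was arbitrary this gives the first claim.

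Next I would prove $Ric_0(n_0,\nabla_0 u) = 0$ on $\partial M$. Decompose $\nabla_0 u = (\partial_4 u)\,n_0 + (\hat g^{ab}\partial_b u)\,\partial_a$; along $\partial M$ the normal component vanishes, so $Ric_0(n_0,\nabla_0 u) = \hat g^{ab}\partial_b u\, Ric_0(n_0,\partial_a) = \hat g^{ab}\partial_b u\, R_{4a}$ where $R_{4a} = Ric_0(\partial_{x^4},\partial_{x^a})$ is a mixed normal–tangential component of the Ricci tensor. The standard Gauss–Codazzi relation expresses $R_{4a}$ on a hypersurface in terms of the second fundamental form: $R_{n a} = \hat\nabla^b L_{ab} - \hat\nabla_a H$ (Codazzi equation, contracted), where $H$ is the mean curvature. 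Since the boundary is totally geodesic, $L\equiv 0$ on all of $\partial M$ (not just at $p$), hence $H\equiv 0$ and all tangential covariant derivatives of $L$ vanish along $\partial M$; therefore $R_{4a} = 0$ on $\partial M$, and consequently $Ric_0(n_0,\nabla_0 u) = 0$.

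I do not expect a serious obstacle here — the result is essentially bookkeeping with Fermi coordinates and the Codazzi equation, both completely standard. The one point that requires a little care is making sure the relevant derivatives of the metric and of $u$ vanish because the hypotheses hold along the \emph{entire} boundary, not merely pointwise: the condition $\partial u/\partial n_0 = 0$ is imposed on all of $\partial M$, which is what lets us differentiate it tangentially, and "totally geodesic" likewise means $L\equiv 0$ everywhere on $\partial M$, which is what makes the Codazzi term vanish identically rather than just at a point. Once this is observed, both identities follow immediately; alternatively one could cite this directly from the boundary-geometry computations in \cite{CQ1} or \cite{CN} rather than reproving it.
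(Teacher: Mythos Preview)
Your proof is correct. Note that the paper does not actually prove this lemma at all --- it is simply quoted from \cite[Lemma 2.10]{CN} without argument --- so there is no ``paper's own proof'' to compare against. Your Fermi-coordinate computation together with the contracted Codazzi equation is exactly the standard way to establish both claims, and your remark about needing the hypotheses along the whole boundary (not merely pointwise) is the right thing to flag.
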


In this section and the next section, we are interested in the following two problems as a preliminary step for proving Theorem \ref{thm:main2}.

\bigskip
Problem 1: We would like to find a conformal deformation $w_1$ such that 

$\begin{cases} 
Q_{w_1}>0 \text{ on $M$}\\ 
R_{w_1}>0 \text{ on $M$} \\
T_{w_1}=0 \iff \frac{\partial \Delta_0 w_1}{\partial n_0} = 0 \text{ on $\partial M$}\\
H_{w_1}=0 \iff \frac{\partial w_1}{\partial n_0} = 0 \text{ on $\partial M$}.
\end{cases}$

Our strategy is to find a one-parameter family of critical points $w_t$ of functionals 
\begin{equation*} \label{eq:b1functional}
F^1_t[w] \coloneqq \gamma_1(t) I[w] + t( II_b[w]- k_{(P_4, P_3)}) \log{\fint e^{4w}dv_0}) + (1-t) III[w]
\end{equation*}
defined on the set $\{w\in W^{2,2}(M)| \frac{\partial w}{\partial n_0} \equiv 0 \}$, using the method of continuity. Here,  $\gamma_1(t) = \frac{-tk_p}{\int |\eta_1|_0^2 dV_0}$.

 The Euler Lagrange equation for a critical point of the functional $F^1_t$ is
\begin{equation}\label{eq:b1el}\tag*{($\star$)$_{t}$}
\begin{cases} \
\gamma_1(t) |\eta|_t^2 + tQ_t + \frac{(1-t)}{12}(-\Delta_t R_t ) = 0 \\
T_t = 0 \\
H_t = 0 
\end{cases}
\end{equation}
Obviously $H_t = 0$ is automatically satisfied since $ \frac{\partial w}{\partial n_0} = 0$.
The weak formulation of the above equation is

\begin{multline} \label{eq:b1weak}
\int \Delta_0 w_t \Delta_0 \phi +  (1-t)\big[2(\nabla_0 w_t \cdot \nabla_0 \phi)(\Delta_0 w_t + |\nabla_0 w_t|^2 ) + |\nabla_0 w_t|^2 \Delta_0 \phi \big]\\
= \int \big[(\frac{1}{3}-t) R_0g_0 + 2t Ric_0\big](\nabla_0 w_t, \nabla_0 \phi) - \int (2tQ_0 + 2 \gamma_1(t) |\eta_1|^2_0)\phi
\end{multline}
for any $\phi \in W^{2,2}(M)$ with $\frac{\partial \phi}{\partial n_0} \equiv 0 $.

\bigskip

Problem 2: We would like to find a conformal deformation $w_2$ such that 

$\begin{cases} 
Q_{w_2} =0 \iff (P_4)_0 +2Q_0 = 0  \text{ on $M$}\\ 
R_{w_2}>0 \text{ on $M$}\\
T_{w_2}>0 \text{ on $\partial M$}\\
H_{w_2}=0 \iff \frac{\partial w_2}{\partial n_0} = 0 \text{ on $\partial M$}.
\end{cases}$

Likewise, our strategy is to find a 1-parameter family of critical points $w_t$ of functionals 

\begin{equation*} \label{eq:b2functional}
F^2_t [w] \coloneqq \gamma_1(t) I_b[w] + t( II_b[w] - \frac{4}{3}k_{(P_4, P_3)}\log{\fint_{\partial M} e^{3w}})+ (1-t) III[w]
\end{equation*}

defined on the set $\{w\in W^{2,2}(M)| \frac{\partial w}{\partial n_0} \equiv 0 \}$. Here,  $\gamma_2(t) = \frac{-4tk_{(P_4,P_3)}}{3\int |\eta_2|_0^{3/2} dV_0}$.

The Euler Lagrange equation for  for a critical point of the functional $F^2_t$ is
\begin{equation}\label{eq:b2el}\tag*{($\dagger$)$_{t}$}
\begin{cases} 
 tQ_t + \frac{(1-t)}{12}(-\Delta_t R_t ) = 0 \\
T_t = -\frac{3}{4}\gamma_2(t) |\eta_2|_t^{\frac{3}{2}} \\
H_t = 0 
\end{cases}
\end{equation}
The weak formulation of the above equation is
\begin{multline} \label{eq:b2weak}
\int \Delta_0 w_t \Delta_0 \phi +  (1-t)\big[2(\nabla_0 w_t \cdot \nabla_0 \phi)(\Delta_0 w_t + |\nabla_0 w_t|^2 ) + |\nabla_0 w_t|^2 \Delta_0 \phi \big] \\
=\int \big[(\frac{1}{3}-t) R_0g_0 + 2t Ric_0\big](\nabla_0 w_t, \nabla_0 \phi) - \int 2tQ_0 \phi - \frac{3}{2} \gamma_2(t) \oint |\eta_2|_0^{\frac{3}{2}} \phi
\end{multline}
for any $\phi \in W^{2,2}(M)$ with $\frac{\partial \phi}{\partial n_0} \equiv 0 $.

We will vary the paramter $t$ from $\frac{2}{3}$ to $1$. Let $t_0 = \frac{2}{3}$.
\begin{lemma} \label{lemma:bstarting}
There exist minimizers for both $F^1_{t_0}$ and $F^2_{t_0}$. For a minimizer $w_{t_0}$ of $F^1_{t_0}$ or $F^2_{t_0}$, the scalar curvature $R_{t_0}$ is positive.
\end{lemma}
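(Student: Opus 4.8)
The plan is to follow closely the argument of Lemma \ref{lemma:starting} from the closed-manifold case, adapting it to the boundary setting for both functionals $F^1_{t_0}$ and $F^2_{t_0}$ at $t_0 = \tfrac{2}{3}$. For existence of minimizers, I would first establish that each functional is bounded below and coercive on the space $\{w \in W^{2,2}(M) \mid \tfrac{\partial w}{\partial n_0} \equiv 0\}$ with an appropriate normalization. The key analytic inputs are: (i) the boundary analogue of Gursky's non-negativity result, namely that the operator pair $((P_4)_0, (P_3)_0)$ is non-negative on $W^{2,2}(M)$ when $k_{(P_4,P_3)} > 0$ and $Y(M,\partial M,[g_0]) > 0$, which gives a coercive lower bound $\int_M w P_4 w \, dV_0 + 2\oint_{\partial M} w P_3 w \, dS_0 \ge C \|w - \fint w\|^2_{W^{2,2}(M)}$ — this should be available from \cite{CN} or the cited boundary references; (ii) the fact that the boundary Yamabe metric minimizes the total scalar-plus-mean-curvature functional at fixed volume, so that $\fint R_w^2 \, dV_w \ge (\fint R_w \, dV_w)^2 \ge (\fint R_0 \, dV_0)^2$, exactly as in the closed case, giving $III[w] \ge 0$; and (iii) the Moser–Trudinger inequality for manifolds with boundary to control the logarithmic terms. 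Because of the choice $\gamma_1(t_0) = \tfrac{-t_0 k_p}{\int |\eta_1|_0^2}$ (resp. $\gamma_2(t_0)$), the coefficient of the $\log \fint e^{4w}$ (resp. $\log \fint_{\partial M} e^{3w}$) term cancels against the corresponding term in $II_b$, so there is no uncontrolled logarithmic term in $F^1_{t_0}$ (resp. $F^2_{t_0}$), leaving a bounded-below coercive functional. A minimizer $w_{t_0}$ then exists by the direct method, and is smooth by the regularity theory of \cite{UV} (adapted to the Neumann-type boundary condition $\tfrac{\partial w}{\partial n_0} = 0$, as is standard for these fourth-order conformal equations).

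For the positivity of the scalar curvature, I would extract from the first component of the Euler–Lagrange system \ref{eq:b1el} (resp. \ref{eq:b2el}) the pointwise identity
\begin{equation*}
-\Delta_{t_0} R_{t_0} + \tfrac{1}{6} R_{t_0}^2 = -12\gamma_1(t_0) |\eta_1|_{t_0}^2 + 2|E_{t_0}|^2 > 0
\end{equation*}
(and the analogous identity with $\gamma_2$ for Problem 2, where the first component has no $\gamma$ term but the computation $Q = -\tfrac{1}{12}\Delta R + \tfrac{1}{2}\sigma_2(A)$ still yields $-\Delta_{t_0} R_{t_0} + \tfrac14 t_0 R_{t_0}^2 = 3 t_0 |E_{t_0}|^2 \ge 0$ at $t_0 = \tfrac23$; one must check this gives the differential inequality $\Delta R - \tfrac16 R^2 \le 0$ with the right sign). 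Then I would invoke Lemma \ref{lemma:g1} — but since $M$ has a boundary, the correct tool is the boundary version of Gursky's lemma: under $\Delta R - \tfrac16 R^2 \le 0$ with totally geodesic boundary (so $\tfrac{\partial R}{\partial n_0}$-type boundary terms are controlled, and indeed $T_{t_0}$ is specified by the second equation of the system), positivity of the first boundary Yamabe invariant forces $R_{t_0} > 0$. The totally geodesic boundary condition together with Lemma \ref{lemma:boundarygeo} is what makes the boundary terms in the relevant integration-by-parts / maximum-principle argument vanish or have a favorable sign.

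The main obstacle I anticipate is ensuring that the maximum-principle argument of Lemma \ref{lemma:g1} genuinely carries over to the boundary case. In the closed case one argues by testing the scalar curvature equation or by a direct maximum principle; with a boundary one must handle the boundary contribution. Here the $T$-curvature equation in the system, $T_{t_0} = 0$ for $F^1_{t_0}$ and $T_{t_0} = -\tfrac34\gamma_2(t_0)|\eta_2|_{t_0}^{3/2} > 0$ for $F^2_{t_0}$, translates (via $T = -\tfrac{1}{12}\tfrac{\partial R}{\partial n}$ for totally geodesic boundary) into a Neumann-type boundary condition on $R_{t_0}$: $\tfrac{\partial R_{t_0}}{\partial n_0} = 0$ in Problem 1 and $\tfrac{\partial R_{t_0}}{\partial n_0} = 9\gamma_2(t_0)|\eta_2|_{t_0}^{3/2} \le 0$ in Problem 2 (recall $n_0$ is the inward normal, so $\le 0$ is the favorable sign for ruling out an interior-type boundary minimum via Hopf's lemma). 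Thus one applies the strong maximum principle in the interior and the Hopf boundary-point lemma on $\partial M$ to the subsolution inequality for $R_{t_0}$, and positivity of $Y(M,\partial M,[g_0])$ rules out the possibility $R_{t_0} \equiv 0$. A secondary technical point is confirming the boundary Moser–Trudinger and boundary Paneitz-coercivity estimates hold under exactly our hypotheses ($k_{(P_4,P_3)} > 0$, positive first Yamabe invariant); if the literature only gives non-negativity of the operator, one may additionally need that the kernel consists only of constants, which follows from $R_0 > 0$ together with the totally geodesic boundary and is needed to upgrade non-negativity to the coercive estimate modulo constants.
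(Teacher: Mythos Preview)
Your proposal is essentially correct and follows the same route as the paper: existence via coercivity (using non-negativity of $(P_4,P_3)$ from \cite{CN}, $III[w]\ge 0$ from the Yamabe-minimizer property, and the cancellation of the logarithmic terms due to the choice of $\gamma_i(t_0)$), then positivity of $R_{t_0}$ from the Euler--Lagrange equations via a maximum-principle argument using the $T$-curvature equation as a Neumann condition on $R_{t_0}$.

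The one place where you are vaguer than the paper is the mechanism behind the ``boundary version of Lemma~\ref{lemma:g1}.'' You write that one ``applies the strong maximum principle in the interior and the Hopf boundary-point lemma \ldots\ to the subsolution inequality for $R_{t_0}$,'' but the inequality $\Delta R \le \tfrac{1}{6}R^2$ alone does not give a contradiction at an interior minimum (if $R(p_0)\le 0$ then $0\le \Delta R(p_0)\le \tfrac{1}{6}R(p_0)^2$ yields nothing), and Hopf's lemma for $\Delta R + c(x)R\le 0$ with $c=-\tfrac{1}{6}R$ requires $c\le 0$, i.e.\ $R\ge 0$, which is what you are trying to prove. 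The paper resolves this exactly as Gursky does in the closed case but with boundary input from Escobar \cite{E}: since $Y(M,\partial M,[g_0])>0$, the first Neumann eigenvalue $\lambda$ of the conformal Laplacian $-\Delta_{t_0}+\tfrac{1}{6}R_{t_0}$ is positive with positive eigenfunction $f$, and one applies the maximum principle to the quotient $R_{t_0}/f$. At an interior minimum $p_0$ of $R_{t_0}/f$ one computes $\Delta_{t_0}(R_{t_0}/f)(p_0)\le \lambda R_{t_0}(p_0)/f(p_0)$, forcing $R_{t_0}(p_0)\ge 0$; a boundary minimum is ruled out by Hopf's lemma together with $\tfrac{\partial R_{t_0}}{\partial n_{t_0}}\le 0$ (from $T_{t_0}\ge 0$), exactly as you anticipated. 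So your plan is right, but the eigenfunction comparison is the missing line that makes the interior step work.
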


\begin{proof}

By \cite[Theorem 1.9]{CN}, $(P_4)_0$ is non-negative operator and $\ker (P_4)_0 \cong \mathbb{R}$. Therefore, we can find minimizers of $F^1_{\frac{2}{3}}$ or $F^2_{\frac{2}{3}}$ as in Lemma \ref{lemma:starting}. These minimizers are smooth on the interior by \cite{UV}. On boundary points, the same method as in \cite{UV} is applied, except that we have to use $W^{2, p}$-estimates for Neumann boundary problems.

By \cite[Lemma 1.1]{E}, and \cite[Proposition 1.3]{E}, the first non zero-Neumann eigenvalue $\lambda$ of the linear elliptic operator $(-\Delta_{t_0} + \frac{1}{6}R_{t_0})$ is positive. The eigenfunction $f$ corresponding to $\lambda$, is positive on $\overline{M}$. We have $(-\Delta_{t_0} + \frac{1}{6} R_{t_0}) f = \lambda f$, $\frac{\partial f}{\partial n_{t_0}} = 0$.

If $w_{t_0}$ is a solution of either problem 1 or problem 2, the $Q$-curvature equation gives us $(-\Delta_{t_0} + \frac{1}{6})R_{t_0} \ge 0$ and the $T$ curvature equation gives us $\frac{\partial R_{t_0}}{\partial n_{t_0}} \le 0 $.

Choose $p_0$ such that $\min_{\overline{M}} \frac{R_{t_0}}{f} = \frac{R_{t_0}}{f}(p_0)$. If $p_0$ is on the interior, we compute $\Delta_{t_0}(\frac{R_{t_0}}{f})(p_0) =[\frac{\Delta_{t_0} R_{t_0} - \frac{1}{6} R_{t_0}^2}{f} + \frac{\lambda R_{t_0}}{f}](p_0) \le \frac{\lambda R_{t_0}}{f}(p_0)$. As $f>0$, we get $R_{t_0}(p_0) \ge 0$. If $R_{t_0}(p_0) =0$, then by the strong maximum principle $R_{t_0} \equiv 0$, which contradicts $Y(M, \partial M, [g_0])>0$. Hence $R_{t_0} > 0$ in this case. Next, suppose $p_0$ is on the boundary. If $\frac{R_{t_0}}{f}(p_0) \le 0$, then by Hopf's lemma we have $\frac{\partial(R_{t_0}/f)}{\partial n_{t_0}} > 0$. This contradicts the $T$-curvature equation.

\end{proof}

Next, we observe that any critical point $w_t$ of $F_t^1$ or $F_t^2$  satisfies $||w_t||_{W^{2,2}} \le C$ for some constant independent of $t$ for $\frac{2}{3} \le t \le 1$. This fact will be used to prove closedness part when we apply the method of continuity in the next section.

\begin{proposition}\label{lemma:bclosedness}
Let $w_t$ be a critical point of $F^1_t$ or $F^2_t$ with normalization $\int_M w_t = 0$ for $F^1_t$ and $\oint_{\partial M} w_t = 0 $ for $F^2_t$. Assume $R_t>0$ for both cases. Then $||w_t||_{W^{2,2}(M)} \le C$ for some $C$ independent of $t$ and $w_t$.
\end{proposition}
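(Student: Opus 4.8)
The statement is the boundary analogue of Proposition~\ref{lemma:weak}, and the plan is to run the same argument: test the weak Euler--Lagrange equation against the critical point $w_t$ itself, use the scalar curvature equation together with $R_t>0$ to bound the Dirichlet energy, and then a Young-type inequality to recover control of $\|\Delta_0 w_t\|_{L^2}$. The one genuinely new feature is that in Problem~2 the normalization is imposed on $\partial M$ rather than on $M$, so the ordinary Poincar\'e inequality must be replaced by a trace--Poincar\'e inequality; and the final passage from an $L^2$-bound on $\Delta_0 w_t$ to a $W^{2,2}$-bound uses elliptic regularity for the Neumann problem instead of the closed-manifold estimate. Throughout, $\gamma_1(t)$ and $\gamma_2(t)$ are uniformly bounded on $[\tfrac23,1]$, so they may be absorbed into constants.

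\textbf{The case $F^1_t$.} Since $w_t$ lies in the admissible set $\{\phi\in W^{2,2}(M):\partial\phi/\partial n_0\equiv 0\}$, I would substitute $\phi=w_t$ in the weak formulation $(\ref{eq:b1weak})$, obtaining exactly the identity of the closed case:
\begin{multline*}
\int (\Delta_0 w_t)^2 + (1-t)\big[3|\nabla_0 w_t|^2\Delta_0 w_t + 2|\nabla_0 w_t|^4\big] \\
= -\big(t-\tfrac13\big)\int R_0|\nabla_0 w_t|^2 + 2t\int Ric_0(\nabla_0 w_t,\nabla_0 w_t) - \int\big(2tQ_0+2\gamma_1(t)|\eta_1|_0^2\big)w_t .
\end{multline*}
Integrating the scalar curvature equation $\Delta_0 w_t+|\nabla_0 w_t|^2+\tfrac16 R_t e^{2w_t}=\tfrac16 R_0$ over $M$, and using $\int_M\Delta_0 w_t\,dV_0=0$ (which holds because $\partial w_t/\partial n_0\equiv 0$) together with $R_t>0$, gives the $t$-independent bound $\int_M|\nabla_0 w_t|^2\le\tfrac16 R_0\,Vol(M,g_0)$. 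On the left side, the inequality $a^2+3(1-t)ab+2(1-t)b^2\ge\tfrac38 a^2$ for $t\ge\tfrac23$ (with $a=\Delta_0 w_t$, $b=|\nabla_0 w_t|^2$) yields the lower bound $\tfrac38\int_M(\Delta_0 w_t)^2$; on the right side, Cauchy--Schwarz and the Poincar\'e inequality (using $\int_M w_t=0$) bound everything by $C\int_M|\nabla_0 w_t|^2+C\le C$. Hence $\int_M(\Delta_0 w_t)^2\le C$, and since $\|w_t\|_{L^2(M)}\le C\|\nabla_0 w_t\|_{L^2(M)}\le C$ as well, the $L^2$-regularity estimate for the Neumann Laplacian upgrades this to $\|w_t\|_{W^{2,2}(M)}\le C$.

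\textbf{The case $F^2_t$.} Testing $(\ref{eq:b2weak})$ with $\phi=w_t$ produces the identical left side — hence the same lower bound after Young's inequality — and the same Dirichlet-energy bound $\int_M|\nabla_0 w_t|^2\le\tfrac16 R_0\,Vol(M,g_0)$, since the scalar curvature equation is insensitive to the normalization. The right side now reads $-(t-\tfrac13)\int R_0|\nabla_0 w_t|^2 + 2t\int Ric_0(\nabla_0 w_t,\nabla_0 w_t) - \int 2tQ_0 w_t - \tfrac32\gamma_2(t)\oint|\eta_2|_0^{3/2}w_t$; the first two terms are $\le C\int_M|\nabla_0 w_t|^2\le C$, and to control the remaining interior and boundary linear terms under the normalization $\oint_{\partial M}w_t=0$ I would invoke the trace--Poincar\'e inequality
\[
\|w_t\|_{L^2(M)}+\|w_t\|_{L^2(\partial M)}\le C\,\|\nabla_0 w_t\|_{L^2(M)} ,
\]
valid for functions with vanishing boundary average, so that $|\int 2tQ_0 w_t|+|\tfrac32\gamma_2(t)\oint|\eta_2|_0^{3/2}w_t|\le C$. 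Thus $\int_M(\Delta_0 w_t)^2\le C$, and the same Neumann $L^2$-estimate gives $\|w_t\|_{W^{2,2}(M)}\le C$.

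\textbf{Main obstacle.} Apart from routine bookkeeping, the only substantive new ingredient is the trace--Poincar\'e inequality needed in Problem~2: because $w_t$ is only normalized on $\partial M$, a priori neither $\|w_t\|_{L^2(M)}$ nor its trace is controlled by the Dirichlet energy. This inequality follows from the standard compactness argument (a sequence $u_k$ with $\|\nabla_0 u_k\|_{L^2(M)}\to 0$, $\oint_{\partial M}u_k=0$ and $\|u_k\|_{L^2(M)}=1$ would subconverge in $L^2(M)$ to a nonzero constant with vanishing boundary average, a contradiction). Everything else transcribes Proposition~\ref{lemma:weak}; the boundary integrals in $II_b$ do not contribute, since $T_0\equiv 0$ (as $R_0$ is constant) and the weak formulations $(\ref{eq:b1weak})$ and $(\ref{eq:b2weak})$ already contain no boundary terms.
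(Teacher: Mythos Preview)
Your proposal is correct and follows precisely the approach intended by the paper, which merely records that ``the proof is similar to that of Proposition~\ref{lemma:weak}.'' You have in fact supplied the two boundary-specific details that the paper leaves implicit: the trace--Poincar\'e inequality needed for $F^2_t$ (where the normalization lives on $\partial M$) and the appeal to the Neumann $L^2$-estimate to pass from $\|\Delta_0 w_t\|_{L^2}$ to the full $W^{2,2}$-norm.
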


\begin{proof}
The proof is similar to that of Proposition \ref{lemma:weak}.
\end{proof}

\section{The method of continuity}


In this section we prove the positivity of linearized operators for equations considered in Section 6. The basic strategy follows that of the Section 3 with only minor differences.

\begin{lemma} \label{lemma:bopenness}
Let $\frac{2}{3} \le t \le 1$. Linearized problems for \ref{eq:b1el} and \ref{eq:b2el} only have trivial solution under normalization $\int_M w_t = 0$ or $\oint_{\partial M} w_t = 0 $ when $R_t>0$.
\end{lemma}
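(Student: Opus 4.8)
The goal is to show that the linearized operators of \ref{eq:b1el} and \ref{eq:b2el}, acting on functions with $\frac{\partial \phi}{\partial n_0} = 0$ and mean value zero (with respect to $dV_0$ or $dS_0$), have trivial kernel when $R_t > 0$. As in Section 3, the key is to establish that these linearized operators are \emph{non-negative} with one-dimensional kernel consisting of constants; the mean-value-zero normalization then kills the constants and forces triviality. I would follow the template of Lemma \ref{lemma:linearization} essentially verbatim, transplanting the pointwise Bochner-type estimate to the boundary setting and checking that the only new contributions — the boundary integrals — either vanish or have a favorable sign because the boundary is totally geodesic.

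\textbf{Key steps.} First I would write down the linearizations. For \ref{eq:b1el} the linearized operator $L^1_t$ is the same interior operator as in \eqref{eq:linearization}, namely
\begin{equation*}
\langle L^1_t \phi, \phi \rangle = t \langle (P_4)_t \phi, \phi \rangle + (1-t)\Big[ \int (\Delta_t \phi)^2 - \frac{1}{3} \int R_t |\nabla_t \phi|^2 \Big] + (\text{boundary terms}),
\end{equation*}
where the boundary terms come from linearizing $T_t = 0$ together with the Neumann constraint; by Lemma \ref{lemma:boundarygeo} and the totally geodesic hypothesis these simplify drastically, since $P_3 w = \tfrac12 \frac{\partial \Delta w}{\partial n}$ and $T_0 \equiv 0$. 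For \ref{eq:b2el} the operator $L^2_t$ is built the same way but with the $I_b$-term replacing the interior $I$-term, so the interior part is identical and only the boundary normalization changes. Second, I would rewrite the Euler--Lagrange equation in traceless form, exactly as in \eqref{eq:tracelessel}: from \ref{eq:b1el} or \ref{eq:b2el} we get $-\Delta_t R_t + \frac{t}{4} R_t^2 - 3t|E_t|^2 = -12\gamma(t)|\eta|_t^2 > 0$ in the interior, with the boundary condition $\frac{\partial R_t}{\partial n_t} \le 0$ (or $=0$ depending on the problem) supplied by the $T$-curvature equation and Lemma \ref{lemma:boundarygeo}. Third, I would run the same chain of inequalities: estimate $-4\int E_t(\nabla_t\phi,\nabla_t\phi)$ from below, use $\int \frac{\Delta_t R_t}{R_t}|\nabla_t\phi|^2 \ge -\int |\nabla_t^2\phi|^2$ via integration by parts and Cauchy--Schwarz, then apply the Bochner formula $\int |\nabla_t^2\phi|^2 = \int (\Delta_t\phi)^2 - E_t(\nabla_t\phi,\nabla_t\phi) - \frac14 R_t|\nabla_t\phi|^2$. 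The crucial point is that on a totally geodesic boundary with $\frac{\partial\phi}{\partial n_0} = 0$, the boundary terms in both the integration by parts and the Bochner identity vanish: $\frac{\partial |\nabla_t \phi|^2}{\partial n_t} = 0$ by Lemma \ref{lemma:boundarygeo}, and the relevant normal derivative of $R_t$ has the right sign. This yields $\langle L^i_t \phi, \phi\rangle \ge C_1(\epsilon)\int(\Delta_t\phi)^2 + C_2(\epsilon)\int R_t |\nabla_t\phi|^2$ for the same admissible range $t > \tfrac38$, hence positivity of $L^i_t$ modulo constants; with the mean-zero normalization the kernel is trivial.

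\textbf{Main obstacle.} The routine part is the interior estimate, which is word-for-word Lemma \ref{lemma:linearization}. The genuine work is bookkeeping the boundary contributions: one must carefully linearize the conditions $T_t = 0$ and $T_t = -\tfrac34 \gamma_2(t)|\eta_2|_t^{3/2}$ (together with $H_t = 0$) and verify that every boundary integral arising in the integration-by-parts steps — in particular $\oint_{\partial M} \frac{\partial(\Delta_t\phi)}{\partial n_t}\phi$, $\oint_{\partial M} \nabla_t^2\phi(\nabla_t\phi, n_t)$, and $\oint_{\partial M} \frac{1}{R_t}\frac{\partial R_t}{\partial n_t}|\nabla_t\phi|^2$ — either cancels or contributes with a sign consistent with non-negativity. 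The totally geodesic assumption and Lemma \ref{lemma:boundarygeo} are exactly what make this go through, so the proof amounts to checking that these simplifications apply at each step; I expect this to be where the bulk of the (still short) argument lives, and where one must be most careful not to drop a term. Given how close this is to the closed case, I would present it by stating the reduction explicitly and then writing "the remainder of the argument is identical to Lemma \ref{lemma:linearization}."
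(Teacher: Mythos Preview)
Your proposal is correct and follows essentially the same approach as the paper: reduce to the interior computation of Lemma~\ref{lemma:linearization} and verify that every boundary integral arising from integration by parts either vanishes (via the linearized Neumann conditions $\frac{\partial\phi}{\partial n_t}=\frac{\partial\Delta_t\phi}{\partial n_t}=0$ and Lemma~\ref{lemma:boundarygeo}) or has a favorable sign (the term $-\oint_{\partial M}\frac{1}{R_t}\frac{\partial R_t}{\partial n_t}|\nabla_t\phi|^2 = 12\oint_{\partial M}\frac{T_t}{R_t}|\nabla_t\phi|^2 \ge 0$ since $T_t\ge 0$ in both problems). The paper records precisely these two boundary identities and then defers to Lemma~\ref{lemma:linearization}, exactly as you outline.
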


\begin{proof}
By direct computation, both linearized equations are as folllows:

$\begin{cases} 
L_t(\phi)  \coloneqq \Delta_t^2\phi + tdiv_t (\frac{2}{3} R_t g_t - 2 Ric_t )d\phi + \frac{1-t}{3} \nabla_t (R_t \nabla_t \phi) = 0 \text{ on $M$} \\ 
\frac{\partial \Delta_t \phi}{\partial n_t} = 0  \text{ on $\partial M$}\\
\frac{\partial \phi}{\partial n_t} = 0  \text{ on $\partial M$}
\end{cases}$

Let $\phi$ be a function with $\frac{\partial \Delta_t \phi}{\partial n_t} = 0$ and $\frac{\partial \phi}{\partial n_t} = 0$ on the boundary. We prove that
$$\int_M \langle L_t(\phi), \phi \rangle dV_t \ge 0.$$
Since the boundary is totally geodesic, $Ric_t(n_t, \nabla_t \phi) = 0$ and $\frac{\partial |\nabla_t \phi|^2 }{\partial n_t} = 0$ by Lemma \ref{lemma:boundarygeo}.

Denote the traceless part of the Ricci cuvature of the metric $g_t$ by $E_t$. Let $\phi \in W^{2,2}(M)$. The equation \ref{eq:el} is rewirtten as
\begin{equation}\label{eq:tracelessel}
-\Delta_t R_t +\frac{t}{4} R_t^2 - 3t |E_t|^2 = -12\gamma(t) |\eta|^2_t > 0.
\end{equation}

There are two identities in the proof of Lemma \ref{lemma:linearization} that include additional boundary integral terms.
The first identity is
\begin{align*}
\langle L_t \phi, \phi \rangle =& \int_M (\Delta_t \phi)^2 -2t \int_M E_t(\nabla_t \phi,\nabla_t \phi) + ( \frac{1}{2} t - \frac{1}{3})\int_M |\nabla_t \phi|^2 \\
& -\oint_{\partial M} \frac{\partial \Delta_t \phi}{\partial n_t} \phi + \oint_{\partial M} \Delta_t \phi \frac{\partial \phi}{\partial n_t} + \oint_{\partial M} \big( -\frac{2R_t}{3} \big) \frac{\partial \phi}{\partial n_t} +2Ric_t(\nabla_t \phi, n_t) \\
=& \int_M (\Delta_t \phi)^2 -2t \int_M E_t(\nabla_t \phi,\nabla_t \phi) + ( \frac{1}{2} t - \frac{1}{3})\int_M |\nabla_t \phi|^2.
\end{align*}

The second identity is
\begin{align*}
\int \frac{\Delta_t R_t }{R_t} |\nabla_t \phi|^2 =& \int_M - \nabla_t R_t \cdot \nabla_t(R_t^{-1}) |\nabla_t \phi|^2 - \frac{\nabla_t R_t}{R_t} \cdot \nabla_t |\nabla_t \phi|^2 - \oint_{\partial M} \frac{1}{R_t} \frac{\partial R_t}{\partial n_t}  |\nabla_t \phi|^2 \\
= & \int_M - \nabla_t R_t \cdot \nabla_t(R_t^{-1}) |\nabla_t \phi|^2 - \frac{\nabla_t R_t}{R_t} \cdot \nabla_t |\nabla_t \phi|^2 + \oint_{\partial M} 12\big( \frac{T_t}{R_t} \big)  |\nabla_t \phi|^2.
\end{align*}
The rest of the proof is exactly the same as the closed manifold case. We have the inequality
\begin{equation*}
\langle L_t \phi, \phi \rangle \ge C_1(t) \int (\Delta_t \phi)^2  + C_2(t) \int R_t|\nabla_t \phi|^2 + C_3(t) \oint_{\partial M} \big( \frac{T_t}{R_t} \big)  |\nabla_t \phi|^2.
\end{equation*}
for some positive constants $C_1(t)$, $C_2(t)$, and $C_3(t)$. As both of \ref{eq:b1el} and \ref{eq:b2el} imply $T_t \ge 0$, we obtain the result.

\end{proof}

\begin{proposition} \label{lemma:bQR}
Let $(M, \partial M, g_0)$ be a compact manifold with umbilic boundary and the conformal invariants $k_{(P_4, P_3)}$ and $Y(M, \partial M, [g_0])$ positive. Then, there are conformal deformations $w_1, w_2$ with  follwowing properties  

$\begin{cases} 
Q_{w_1}>0, R_{w_1}>0  \text{ on $M$}  \\ 
T_{w_1}= 0, H_{w_1}=0 \text{ on $\partial M$}
\end{cases}$ and  \,
$\begin{cases} 
Q_{w_2}=0, R_{w_2}>0  \text{ on $M$}  \\ 
T_{w_2}>0 , H_{w_2} =0 \text{ on $\partial M$}
\end{cases}$

\end{proposition}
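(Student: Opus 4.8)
The plan is to run the method of continuity for each of the two one-parameter families of Euler--Lagrange equations \ref{eq:b1el} and \ref{eq:b2el} over $t\in[\tfrac23,1]$, exactly parallel to the proof of Proposition \ref{lemma:QR} in the closed case. Fix one of the two problems, say Problem 1, and set
\[
S_1=\{t\in[\tfrac23,1]\mid \text{\ref{eq:b1el} has a smooth solution with $R_t>0$}\},
\]
and similarly $S_2$ for Problem 2. By Lemma \ref{lemma:bstarting} we have $\tfrac23\in S_1\cap S_2$, since there the minimizer of $F^1_{t_0}$ (resp.\ $F^2_{t_0}$) exists, is smooth up to the boundary by the interior regularity of \cite{UV} together with $W^{2,p}$-estimates for the Neumann problem, and has $R_{t_0}>0$. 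The goal is to show $S_1$ and $S_2$ are both open and closed in $[\tfrac23,1]$, hence equal to $[\tfrac23,1]$; evaluating at $t=1$ then gives the conformal factors $w_1$ and $w_2$ with the asserted sign conditions on $Q,R,T,H$ (the conditions $H_t\equiv0$ being automatic from the constraint $\partial w/\partial n_0\equiv 0$, and $T_t\ge 0$ at $t=1$ becoming $T_t>0$ for Problem 2 by the strong maximum principle / Hopf lemma applied to the $T$-curvature equation).

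For openness: at $t_1\in S_1$ the linearized operator is the Neumann problem $(L_t,\partial\Delta_t/\partial n_t,\partial/\partial n_t)$ analyzed in Lemma \ref{lemma:bopenness}, which is non-negative with kernel exactly the constants when $R_{t_1}>0$; modding out by constants via the normalization $\int_M w_t=0$ (resp.\ $\oint_{\partial M}w_t=0$), the implicit function theorem in $C^{4,\alpha}$ produces a unique nearby solution $w_t$ for $t$ near $t_1$, and shrinking the $C^{4,\alpha}$-neighborhood preserves $R_t>0$ by continuity. For closedness: given $t_n\to t'$ with solutions $w_{t_n}$, Proposition \ref{lemma:bclosedness} gives a uniform $W^{2,2}$-bound, so a subsequence converges weakly to a weak solution $w_{t'}$ of \ref{eq:b1el} (resp.\ \ref{eq:b2el}) for $t=t'$; the regularity theory of \cite{UV} (with the Neumann modification at the boundary) makes $w_{t'}$ smooth. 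It remains to upgrade $R_{t'}\ge 0$ to $R_{t'}>0$: weak lower semicontinuity of the scalar curvature equation $\Delta_0 w+|\nabla_0 w|^2+\tfrac16 R_t e^{2w_t}=\tfrac16 R_0$ against nonnegative test functions gives $R_{t'}\ge 0$ pointwise, and then $R_{t'}\not\equiv 0$ (which would contradict $Y(M,\partial M,[g_0])>0$) plus the strong maximum principle applied to $-\Delta_{t'}R_{t'}+\tfrac{t'}{4}R_{t'}^2=3t'|E_{t'}|^2-12\gamma_i(t')|\eta_i|_{t'}^2>0$ forces $R_{t'}>0$. Hence $t'\in S_i$.

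The one genuinely new point compared to the closed case is the boundary behavior, and this is where I expect the bulk of the care to go: one must verify that every integration-by-parts and maximum-principle step respects the Neumann conditions $H_t\equiv 0$ (i.e.\ $\partial w/\partial n_0\equiv 0$) and $T_t=0$ or $T_t\ge 0$. Lemma \ref{lemma:boundarygeo} is the workhorse here — since the boundary stays totally geodesic along the flow, it guarantees $\partial|\nabla_t\phi|^2/\partial n_t=0$ and $Ric_t(n_t,\nabla_t\phi)=0$, which is exactly what is needed for the boundary terms in the two identities of Lemma \ref{lemma:bopenness} to collapse (the first to nothing, the second to the harmless sign-definite term $\oint_{\partial M}12(T_t/R_t)|\nabla_t\phi|^2$ with $T_t\ge 0$). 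For the closedness step one also needs the starting-point argument of Lemma \ref{lemma:bstarting} to go through along the continuity path, i.e.\ that the auxiliary first-Neumann-eigenvalue comparison (via \cite{E}) combined with Hopf's lemma rules out $R_t$ touching zero at a boundary point; since $T_t\ge 0$ is preserved by both families \ref{eq:b1el},\ref{eq:b2el}, this obstruction does not occur. Assembling these, $S_1=S_2=[\tfrac23,1]$ and the proposition follows.
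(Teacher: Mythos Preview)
Your proposal is correct and follows essentially the same approach as the paper: the paper's own proof is a one-line reference back to Proposition~\ref{lemma:QR}, invoking Lemma~\ref{lemma:bstarting}, Proposition~\ref{lemma:bclosedness}, and Lemma~\ref{lemma:bopenness} in place of their closed-manifold counterparts, with the sole additional remark that Hopf's maximum principle is needed at the boundary to upgrade $R_{t'}\ge 0$ to $R_{t'}>0$---exactly what you outline.

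One small imprecision worth correcting: for Problem~2 the interior equation \ref{eq:b2el} contains no $\eta$-term (that term lives in the $T$-curvature boundary condition), so the right-hand side of your displayed maximum-principle inequality is only $3t'|E_{t'}|^2\ge 0$, not strictly positive; the interior strong maximum principle as you state it therefore does not immediately apply for Problem~2. The correct route---which you in fact invoke in your final paragraph---is to use the first-Neumann-eigenfunction comparison from Lemma~\ref{lemma:bstarting} together with Hopf's lemma, which handles interior and boundary minima simultaneously for both problems. Similarly, $T_{w_2}>0$ at $t=1$ follows directly from the equation $T_t=-\tfrac{3}{4}\gamma_2(1)|\eta_2|_t^{3/2}>0$, not from a maximum-principle argument.
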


\begin{proof}
The proof is identical to that of Proposition \ref{lemma:QR} using Lemma \ref{lemma:bstarting}, Proposition \ref{lemma:bclosedness}, and Lemma \ref{lemma:bopenness}, except for that we have to use Hopf maximum principle to show the scalar curvature is strictly positive.
\end{proof}

\section{Compactness of solutions : manifolds with boundary}

We aim to prove a compactness result for solutions of two one-parameter families of equations, which is a key step in establishing Theorem \ref{thm:main2}. This section is analogue of Section 4 for manifolds with boundary. Let $(M, \partial M, g)$ be a compact Riemannian manifold with totally geodesic boundary. once again, we assume $k_{(P_4, P_3)}, Y(M, \partial M, [g])>0$.

Firstly, we describe an one-parameter family of equations that will be used to find $w_1$ in Theorem \ref{thm:main2}. In this case, we conformally deform the metric $g$ into $g_1$ so that the boundary is totally geodesic, $T_1\equiv 0$, and both  $Q_1$ and $R_1$ are positive pointwisely, according to the first part of Proposition \ref{lemma:bQR}. Under these assumptions, we show that there exists an uniform $C^{4,\alpha}$ bound on solutions to the following equations: 
\begin{equation} \label{eq:bb1ls}\tag*{($\star \star$)$_{t}$}
\begin{cases} 
Q_t = t k_p e^{-4w_t} +(1-t)Q_1 \iff (P_4)_1w_t = 2tk_{(P_4, P_3)} e^{4w_t} - 2tQ_1  \\
R_t>0 \text{ on $\overline{M}$} \\
T_t=0 \iff \frac{\partial \Delta_1 w_t}{\partial n_1} = 0  \\
H_t=0 \iff \frac{\partial w_t}{\partial n_1} = 0  \\
\int_M e^{4w_t} = 1
\end{cases}
\end{equation} where $Q_t$, $R_t$, $T_t$, and $H_t$ denote the $Q$-curvature, scalar curvature, $T$-curvature, and mean curvature of the metric $g_t \coloneqq e^{2w_t} g_1$ respectively.

Secondly, we describe an one-parameter family of equations that will be used for finding $w_2$ in Theorem \ref{thm:main2}. In this case, we conformally deform the metric $g$ into $g_2$ so that the boundary is totally geodesic, $Q_2 \equiv 0$, and both of $T_2$ and $R_2$ are positive pointwisely according to the second part of Proposition \ref{lemma:bQR}. Under these assumptions, we show that there exists an uniform $C^{4,\alpha}$ bound on solutions of following equations: 
\begin{equation} \label{eq:bb2ls}\tag*{($\dagger \dagger$)$_{t}$}
\begin{cases} 
Q_t=0 \iff (P_4)_2w_t = 0  \\ 
R_t>0 \text{ on $\overline{M}$}\\
T_t =  tk_{(P_4, P_4)}e^{-3w_t} + (1-t)T_2 \iff  (P_3)_2 w_t = tk_{(P_4, P_4)}e^{3w_t}  -tT_2 \\
H_t=0 \iff \frac{\partial w}{\partial n_2} = 0 \\
\int_{\partial M} e^{3w_t} = 1
\end{cases}
\end{equation} where $Q_t$, $R_t$, $T_t$, and $H_t$ denote the $Q$-curvature, scalar curvature, $T$-curvature, and mean curvature of the metric $g_t \coloneqq e^{2w_t} g_2$ respectively.

For notational convenience, we will suppress subscripts 1 and 2 whenever the argument is independent of whether background metric is $g_1$ or $g_2$.

\begin{proposition}\label{lemma:blocalestimate}
Let $w_t$ be a solution of the PDE \ref{eq:bb1ls} or \ref{eq:bb2ls} for some $0\le t \le 1$. Let $p \in M$ and $B_r(p)$ be a geodesic ball centered at $p$ with radius $r$.
Then, $\int_M |\nabla w_t|^2, \, \int_M (w_t-\fint_{\partial M} {w}_t)^2,  \, \int_M (w_t-\fint_{\partial M} {w}_t)^2 < C$, and $\fint_{B_r(p)\cap M} |\nabla w_t|^2 < \frac{C}{r^2}$ for sufficiently small $r$, where $C$ is a constant independent of $t$ and $r$.
\end{proposition}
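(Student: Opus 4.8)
The plan is to adapt Proposition~\ref{lemma:localestimate} to the boundary setting, using the scalar curvature equation together with the totally geodesic boundary condition to absorb the boundary terms that appear upon integration by parts. First I would write down the scalar curvature equation for the conformal change $g_t = e^{2w_t}g$, namely $\Delta w_t + |\nabla w_t|^2 + \tfrac{1}{6}R_t e^{2w_t} = \tfrac{1}{6}R$; since $R_t > 0$ on $\overline{M}$ along both families \ref{eq:bb1ls} and \ref{eq:bb2ls}, this gives the pointwise inequality $\Delta w_t + |\nabla w_t|^2 \le \tfrac{1}{6}R$. Integrating over $M$ and applying the divergence theorem produces a boundary term $\oint_{\partial M} \tfrac{\partial w_t}{\partial n}\,dS$, which vanishes because $H_t = 0$ forces $\tfrac{\partial w_t}{\partial n} = 0$ in both problems. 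This yields the uniform bound $\int_M |\nabla w_t|^2 < C$ exactly as in the closed case.

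Next, to pass from the gradient bound to the $L^2$ bound on $w_t$ minus its average, I would invoke the Poincar\'e (respectively trace-Poincar\'e) inequality on $M$. For solutions of \ref{eq:bb1ls}, normalized by $\int_M e^{4w_t} = 1$, one controls $\fint_M w_t$ (and hence $w_t - \fint_M w_t$) using the gradient bound and Jensen's inequality applied to the volume normalization; for solutions of \ref{eq:bb2ls}, normalized by $\oint_{\partial M} e^{3w_t} = 1$, one instead controls $\fint_{\partial M} w_t$, and the Poincar\'e inequality relative to the boundary average $\oint_{\partial M} w_t$ (valid since $M$ is compact connected with nonempty boundary) gives $\int_M (w_t - \fint_{\partial M} w_t)^2 < C$. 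The statement lists both averaged quantities because the relevant normalization differs between the two families; I would handle each case separately but in parallel, with the key point being that the trace inequality $\|u - \fint_{\partial M} u\|_{L^2(M)} \le C\|\nabla u\|_{L^2(M)}$ is available on our fixed background geometry.

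For the local estimate $\fint_{B_r(p)\cap M}|\nabla w_t|^2 < C/r^2$, I would repeat the cutoff argument of Proposition~\ref{lemma:localestimate}: multiply the scalar curvature inequality by $\eta_r^2$ with $\eta_r$ supported in $B_{2r}(p)$, $\eta_r \equiv 1$ on $B_r(p)$, $|\nabla \eta_r| \le C/r$, integrate over $M$, integrate by parts, and use Young's inequality to absorb the cross term $\int \eta_r \nabla\eta_r\cdot\nabla w_t$. The one new feature is the boundary term $\oint_{\partial M \cap B_{2r}(p)} \eta_r^2 \tfrac{\partial w_t}{\partial n}\,dS$ arising from integration by parts, and again this vanishes identically because $\tfrac{\partial w_t}{\partial n} = 0$ on $\partial M$. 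Thus the computation goes through verbatim on the half-ball $B_r(p)\cap M$, with constants depending only on the fixed background geometry and bounds for $R$.

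\textbf{Main obstacle.} The analytically substantive points are (i) verifying that the Poincar\'e/trace inequalities hold with constants uniform in $t$ — which is immediate here since the background metric $g_1$ or $g_2$ is \emph{fixed} and all estimates are taken with respect to it — and (ii) keeping careful track of which average ($\fint_M$ or $\fint_{\partial M}$) is controlled by which normalization. Neither is a genuine difficulty: the proof is essentially a transcription of Proposition~\ref{lemma:localestimate} with the boundary integrals handled by the totally geodesic / zero mean curvature condition. I would therefore present the proof by remarking that it follows the closed-manifold argument and pointing out only that the new boundary terms vanish by $H_t \equiv 0$, and that the two normalizations are treated in turn.
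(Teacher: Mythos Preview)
Your proposal is correct and follows exactly the approach the paper takes: the paper's proof is a single sentence remarking that, since $\frac{\partial w_t}{\partial n}=0$, the computation is identical to Proposition~\ref{lemma:localestimate}. Your expansion of this remark---noting that the boundary term from integrating $\Delta w_t$ vanishes, and that the cutoff argument carries over verbatim---is precisely what that sentence is abbreviating; the only superfluous ingredient is the Jensen/normalization discussion for controlling the averages themselves, which is not needed for the proposition as stated (only $w_t$ minus its average is bounded here, and that follows directly from Poincar\'e once $\int_M|\nabla w_t|^2<C$).
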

\begin{proof}
Taking into account of the fact $\frac{\partial w_t}{\partial n} = 0$, there is no difference in the computation with the closed manifold case Proposition \ref{lemma:localestimate}.
\end{proof}

Next, we prove the following energy estimate for solutions of Neumann boundary valued fourth order PDEs having bi-Laplacian as a leading-order term.
\begin{proposition}{(Boundary energy estimate)}\label{lemma:benergyestimate}
Suppose $w \in W^{2, 2}(M)$ is a weak solution to the following fourth-order PDE with Neumann boundary condition.
\begin{equation} \label{eq:weaksolution}
\begin{cases}
\Delta^2 w + \delta (A ) dw +f = 0 \\
\frac{\partial \Delta w } {\partial n } = - g \\
\frac{\partial w}{\partial n} = 0
\end{cases}
\end{equation}
where $f$, $g$ are  bounded functions, $A$ is a smooth symmetric 2-tensor and such that $A(\nabla w, n) = 0$(See Lemma \ref{lemma:boundarygeo}). In other words, $$
\int_M \Delta w \Delta \phi + E(\nabla w, \nabla \phi) +f\phi + \int_{\partial M} g\phi = 0
$$  
for every $\phi \in W^{2,2}(M)$ with $\frac{\partial \phi}{\partial n} \equiv =0$.  Then, for all sufficiently small $r>0$, $$
||\nabla^2 w ||_{L^2(B_{r})} \le  C_r (|| w ||_{W^{1,2} ( B_{2r})} + ||f||_{L^2(B_{2r})}  +  ||g||_{L^2(B_{2r}\cap \partial M)})  
$$
where $B_r$ and $B_{2r}$ are two concentric geodesic balls and $C_r$ is a constant depending on $r$ and $||A||_{L^{\infty}(B_{2r})}$.
\end{proposition}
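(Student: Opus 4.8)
The plan is to mimic the proof of Proposition \ref{lemma:energyestimate} (the interior energy estimate), carefully tracking the boundary terms that appear from integration by parts and checking that the Neumann conditions $\frac{\partial w}{\partial n}=0$ and the hypothesis $A(\nabla w,n)=0$ make them either vanish or controllable. Concretely, I would test the weak formulation with $\phi = \eta_r^2 w$, where $\eta_r$ is the cut-off function from the proof of Proposition \ref{lemma:localestimate} (with $\eta_r\equiv 1$ on $B_r$, $\eta_r\equiv 0$ outside $B_{2r}$, $|\nabla\eta_r|\le C/r$, $|\nabla^2\eta_r|\le C/r^2$). Note that $\phi = \eta_r^2 w$ is an admissible test function since $\frac{\partial \phi}{\partial n} = \eta_r^2\frac{\partial w}{\partial n} + 2\eta_r\frac{\partial \eta_r}{\partial n}w$, and here one should take $\eta_r$ to additionally satisfy $\frac{\partial \eta_r}{\partial n}=0$ on $\partial M$ (which can be arranged since the balls are geodesic and we may take $\eta_r$ radial, or just note $\frac{\partial\phi}{\partial n}=0$ because $\frac{\partial w}{\partial n}=0$ and, if $p\in\partial M$, one arranges the cutoff to have vanishing normal derivative — this is the only subtlety beyond the interior case).

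For the leading (bi-Laplacian) term, the computation $\int_M \Delta w\,\Delta(\eta_r^2 w)$ proceeds exactly as in Proposition \ref{lemma:energyestimate}: one rewrites it as $\int_M (\Delta(\eta_r w))^2 + (\text{lower order})$ and then estimates $\int_M(\Delta(\eta_r w))^2 \ge \int_{B_{2r}}|\nabla^2(\eta_r w)|^2$. The key point is that the integration-by-parts identity $\int_M (\Delta v)^2 = \int_M |\nabla^2 v|^2 + (\text{curvature terms}) + (\text{boundary terms})$ applied to $v=\eta_r w$ produces boundary integrals involving $\frac{\partial v}{\partial n}$ and second fundamental form contractions; the former vanishes because $\frac{\partial(\eta_r w)}{\partial n}=0$, and the latter vanish (or contribute zero) because the boundary is totally geodesic, $L\equiv 0$. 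So the boundary contributions from the principal term are absent, giving $\int_{B_{2r}}(\Delta(\eta_r w))^2 - C\int_{B_{2r}}(\frac{|\nabla w|^2}{r^2} + \frac{w^2}{r^4}) \le \int_M \Delta w\,\Delta(\eta_r^2 w)$, just as in the closed case.

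For the lower-order terms: the $A$-term gives $\int_M A(\nabla w,\nabla(\eta_r^2 w)) = \int_M \eta_r^2 A(\nabla w,\nabla w) + 2\eta_r w\, A(\nabla w,\nabla\eta_r)$, bounded by $C\|A\|_{L^\infty(B_{2r})}\int_{B_{2r}}(|\nabla w|^2 + \frac{w^2}{r^2})$ — no boundary term here since $A$ is just contracted with gradients in a volume integral. The $f$-term is estimated by $-\|f\|_{L^2(B_{2r})}\|w\|_{L^2(B_{2r})}$ exactly as before. The genuinely new term is the boundary integral $\int_{\partial M} g\,\eta_r^2 w$, which is bounded by $\|g\|_{L^2(B_{2r}\cap\partial M)}\|\eta_r^2 w\|_{L^2(B_{2r}\cap\partial M)}$; by a trace inequality $\|\eta_r^2 w\|_{L^2(B_{2r}\cap\partial M)} \le C(\|w\|_{L^2(B_{2r})} + \|\nabla w\|_{L^2(B_{2r})} + \tfrac{1}{r}\|w\|_{L^2(B_{2r})})$, which is absorbed into $\|w\|_{W^{1,2}(B_{2r})}$ (up to $r$-dependent constants). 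Collecting all the estimates, rearranging, and using that the cross terms on the right can be made small (Young's inequality), one obtains $\|\nabla^2 w\|_{L^2(B_r)}\le C_r(\|w\|_{W^{1,2}(B_{2r})} + \|f\|_{L^2(B_{2r})} + \|g\|_{L^2(B_{2r}\cap\partial M)})$.

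The main obstacle is purely bookkeeping: verifying that every boundary term generated by the repeated integrations by parts in the principal term either vanishes by the Neumann conditions ($\frac{\partial w}{\partial n}=0$, hence $\frac{\partial(\eta_r w)}{\partial n}=0$ if $\eta_r$ has vanishing normal derivative), or by the totally geodesic hypothesis ($L\equiv 0$, so all second-fundamental-form contractions drop out), or by the structural assumption $A(\nabla w,n)=0$ coming from Lemma \ref{lemma:boundarygeo}. There is no essential analytic difficulty beyond the interior case; the trace estimate for the single new boundary term involving $g$ is standard. Once this is in place the proof is a word-for-word adaptation of Proposition \ref{lemma:energyestimate}.
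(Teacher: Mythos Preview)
Your proposal is correct and follows essentially the same approach as the paper: test with $\phi=\eta_r^2 w$, arrange the cutoff so that $\frac{\partial \eta_r}{\partial n}=0$ on $\partial M$ (the paper does this by taking $\eta_r$ to depend only on the distance from a boundary point, using that the boundary is totally geodesic), and then repeat the interior argument of Proposition~\ref{lemma:energyestimate} verbatim. You have in fact spelled out more of the bookkeeping (the vanishing of boundary terms via $\frac{\partial(\eta_r w)}{\partial n}=0$, $L\equiv 0$, $A(\nabla w,n)=0$, and the trace estimate for the new $g$-term) than the paper does, but the strategy is identical.
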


\begin{proof}
We test with $\phi = \eta_r^2 w$. The only difference is that we have to choose $\eta_r$ with $\frac{\partial \eta_r}{\partial n} \equiv 0$ on $\partial M$. This can be achieved by choosing $\eta_r$ depending only on the distance from a point on the boundary. As the boundary is totally geodesic, it is easy to see that $\frac{\partial \eta_r}{\partial n} \equiv 0$ is achieved. The rest is the same as that of Proposition \ref{lemma:energyestimate}.

\end{proof}
Also, we record a basic fact regarding the boundary regularity.
\begin{lemma} \label{lemma:bstrongestimate}

Suppose $w \in W^{4,2}(M)$ is a strong solution to the equation (\ref{eq:weaksolution}) with the same conditions on $A$ and $f, g$. Then , we have
$$||\nabla^2 w ||_{W^{2, 2}(B_{r})} \le  C_r (||f||_{L^2(B_{2r})} +|| w||_{W^{1,2} ( B_{2r})} + ||g ||_{W^{1/2, 2}(B_{2r}\cap \partial M)}).$$
where $B_r$ and $B_{2r}$ are two concentric geodesic balls and $C_r$ is a constant depending on $r$ and $||A||_{L^{\infty}(B_{2r})}, ||\nabla A||_{L^{\infty}(B_{2r})}$.
\end{lemma}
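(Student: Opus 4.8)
The plan is to reduce Lemma~\ref{lemma:bstrongestimate} to the standard boundary $L^2$-estimates for second-order elliptic Neumann problems, exactly paralleling the interior argument of Lemma~\ref{lemma:strongestimate}. Writing the PDE in local coordinates, the equation $\Delta^2 w + \delta(A)dw + f = 0$ becomes $\Delta(\Delta w) = -f + A_{ij}w_{ij} + A_{ij;i}w_j$, so $u \coloneqq \Delta w$ solves a second-order equation $\Delta u = F$ with $F = -f + A_{ij}w_{ij} + A_{ij;i}w_j \in L^2(B_{2r})$ (using Proposition~\ref{lemma:benergyestimate} to control $\|\nabla^2 w\|_{L^2(B_{2r})}$, after enlarging radii). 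The key point is the boundary condition: $\tfrac{\partial u}{\partial n} = \tfrac{\partial \Delta w}{\partial n} = -g$ on $\partial M$, so $u$ satisfies an inhomogeneous Neumann problem $\Delta u = F$ in $M$, $\tfrac{\partial u}{\partial n} = -g$ on $\partial M$. The standard $W^{2,2}$-estimate for the Neumann problem (valid near the totally geodesic boundary after flattening via Fermi coordinates) then gives
\begin{equation*}
\|u\|_{W^{2,2}(B_r)} \le C_r\big(\|F\|_{L^2(B_{2r})} + \|g\|_{W^{1/2,2}(B_{2r}\cap\partial M)} + \|u\|_{L^2(B_{2r})}\big),
\end{equation*}
and combining with Proposition~\ref{lemma:benergyestimate} and Proposition~\ref{lemma:blocalestimate} to absorb $\|F\|_{L^2}$ and $\|u\|_{L^2}$ into $\|f\|_{L^2(B_{4r})} + \|g\|_{W^{1/2,2}} + \|w\|_{W^{1,2}(B_{4r})}$ (shrinking/enlarging balls as needed) yields $\|\Delta w\|_{W^{2,2}(B_r)} \le C_r(\|f\|_{L^2(B_{2r})} + \|w\|_{W^{1,2}(B_{2r})} + \|g\|_{W^{1/2,2}(B_{2r}\cap\partial M)})$ after relabeling radii.

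Second, I would recover the full $W^{4,2}$-control of $w$ from the $W^{2,2}$-control of $\Delta w$. This is again a Neumann problem: $w$ solves $\Delta w = \Delta w$ (the known right-hand side now lies in $W^{2,2}$) with $\tfrac{\partial w}{\partial n} = 0$ on $\partial M$, so elliptic regularity for the homogeneous-Neumann Laplacian bootstraps $w \in W^{4,2}(B_r)$ with the stated estimate. This is the boundary analogue of the "integration by parts" step at the end of the proof of Lemma~\ref{lemma:strongestimate}; the difference is that one invokes the Neumann $W^{k+2,2} \hookleftarrow W^{k,2}$ regularity rather than interior elliptic regularity, and the homogeneous Neumann condition $\partial w/\partial n = 0$ contributes no boundary data term.

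The main technical obstacle, and the one point that deserves genuine care, is the compatibility of the boundary geometry with the decomposition into two successive second-order Neumann problems. One needs the boundary to be totally geodesic so that (i) the normal derivative commutes nicely with $\Delta$ along $\partial M$, (ii) by Lemma~\ref{lemma:boundarygeo} the condition $A(\nabla w, n) = 0$ holds, which is exactly what makes $\delta(A)dw$ harmless as a lower-order term near the boundary without generating extra boundary integrals in the weak formulation, and (iii) the cut-off functions $\eta_r$ can be taken with $\partial\eta_r/\partial n \equiv 0$ as in Proposition~\ref{lemma:benergyestimate}. Once these are in place, every step is a routine application of standard elliptic theory for Neumann problems with $W^{1/2,2}$ boundary data (trace theory supplies the correct fractional Sobolev norm for $g$), so the proof is essentially a bookkeeping exercise and I would simply indicate these points and refer to the interior argument of Lemma~\ref{lemma:strongestimate} and to Proposition~\ref{lemma:benergyestimate} for the details.
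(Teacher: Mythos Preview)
Your proposal is correct and follows essentially the same approach as the paper's own proof: both reduce to two successive second-order Neumann problems (first for $u=\Delta w$ with inhomogeneous data $\partial u/\partial n=-g$, then for $w$ with homogeneous data), invoking the standard $L^2$-estimate for the Neumann problem in place of the interior estimate used in Lemma~\ref{lemma:strongestimate}. The paper's proof is a single sentence recording exactly this substitution, and your write-up simply makes the implicit bookkeeping explicit.
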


\begin{proof}
The only difference with Lemma \ref{lemma:strongestimate} is that we use the $L^2$-estimate for Neumann boundary problem:
\begin{equation*}
||\nabla^2 v||_{L^2(B_{r})} \le  C_r (||\Delta v||_{L^2(B_{2r})}  + || v||_{L^2(B_{2r})} + ||\frac{\partial v}{\partial n} ||_{W^{1/2, 2}(B_{2r}\cap \partial M)}).
\end{equation*}
\end{proof}


Now we are ready to prove the following compactness results for solutions of \ref{eq:bb1ls} and \ref{eq:bb2ls}. Let $0<\alpha<1$.

\begin{theorem} \label{thm:b1compactness}
There exists $C_{\alpha}>0$ such that $||w_t||_{C^{4, \alpha}} < C_{\alpha}$ for every solution $w_t$ of the equation \ref{eq:bb1ls} where $C_\alpha$ independent of $0 \le t \le 1$.
\end{theorem}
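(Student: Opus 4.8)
The plan is to mimic the closed-manifold argument of Theorem \ref{theorem:compactness} line by line, keeping careful track of the Neumann boundary conditions, which (because the boundary is totally geodesic and $\partial w_t/\partial n\equiv 0$) turn out to be essentially harmless. As in the closed case, the first reduction is: it suffices to show $\sup_{\overline M} w_t < C$ uniformly in $t$. Assuming this bound, the right-hand side $2tk_{(P_4,P_3)}e^{4w_t}-2tQ_1$ of \ref{eq:bb1ls} is bounded above pointwise; replacing $w_t$ by $w_t-\overline w_t$ (same equation, different normalization) and applying the boundary energy estimate Proposition \ref{lemma:benergyestimate} and the boundary regularity Lemma \ref{lemma:bstrongestimate} — with $g\equiv 0$ since $\partial\Delta_1 w_t/\partial n_1=0$ — together with the $W^{1,2}$ and local gradient bounds of Proposition \ref{lemma:blocalestimate}, patched over a finite cover of $\overline M$, gives $\|w_t-\overline w_t\|_{W^{4,2}(M)}\le C$. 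To pin down $\overline w_t$ one uses the Moser--Trudinger inequality for the boundary Paneitz pair (the analogue of \cite[Theorem 1.2]{CY}, available from \cite{CQ1},\cite{CN}): since $k_{(P_4,P_3)}>0$ one gets $k_{(P_4,P_3)}\log\fint_M e^{4(w_t-\overline w_t)}\le \int w_t(P_4)_1 w_t + \oint 2w_t(P_3)_1 w_t \le C$, and combined with the normalization $\int_M e^{4w_t}=1$ this forces $\overline w_t$ to be bounded above; bounded below follows from $\sup w_t < C$ and the $W^{1,2}$ control. Bootstrapping via $L^p$ and Schauder estimates for the Neumann problem (exactly as in the proof of Theorem \ref{theorem:compactness}) upgrades this to the uniform $C^{4,\alpha}$ bound.

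So the crux is the supremum bound, proved by contradiction through a blow-up argument. Suppose $\sup w_{t_n}=w_{t_n}(p_n)\to+\infty$ with $t_n\to t_\infty$ and $p_n\to p\in\overline M$. Set $\tilde w_n(x)=w_{t_n}(r_nx)+\log r_n$ with $r_n$ chosen so that $\sup\tilde w_n=\tilde w_n(0)=1$; then $r_n\to 0$. There are two cases according to whether $r_n^{-1}\,\mathrm{dist}(p_n,\partial M)\to+\infty$ or stays bounded. In the interior case the rescaled domains exhaust $\mathbb R^4$ and the analysis is verbatim that of the closed case: using Proposition \ref{lemma:blocalestimate}, Proposition \ref{lemma:benergyestimate} and Lemma \ref{lemma:bstrongestimate} on $\tilde w_n-\fint_{B_\rho}\tilde w_n$ one extracts $w_\infty\in W^{4,2}_{\mathrm{loc}}(\mathbb R^4)$ solving $\Delta^2 w_\infty = 2t_\infty k_{(P_4,P_3)}e^{4w_\infty}$ with $\int e^{4w_\infty}\le 1$; since the rescaled scalar curvatures are nonnegative, Theorem \ref{theorem:normal} shows (after the shift $\hat w_\infty=w_\infty-\tfrac14\log(3/(t_\infty k_{(P_4,P_3)}))$) that $\hat w_\infty$ is normal, Theorem \ref{theorem:classification} identifies it as a standard bubble, and computing the mass gives $\int_{\mathbb R^4}e^{4w_\infty}=8\pi^2/(t_\infty k_{(P_4,P_3)})>2$ because $k_{(P_4,P_3)}<4\pi^2$ and $t_\infty\le 1$ — contradicting $\int e^{4w_\infty}\le 1$. (Here the stronger bound $k_{(P_4,P_3)}<4\pi^2$ from \cite[Lemma 5.2]{CN} is what makes the contradiction even sharper than needed.)

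The boundary case is the one genuinely new point. When $d_n:=r_n^{-1}\,\mathrm{dist}(p_n,\partial M)$ stays bounded, after translating so the nearest boundary point sits at the origin, the rescaled domains exhaust a half-space $\mathbb R^4_+$, and because the boundary is totally geodesic the rescaled metrics converge to the flat half-space metric while the boundary conditions pass to the limit as $\partial\Delta w_\infty/\partial n=0$, $\partial w_\infty/\partial n=0$ on $\partial\mathbb R^4_+$. One then reflects $w_\infty$ evenly across $\partial\mathbb R^4_+$: the Neumann conditions on $w_\infty$ and on $\Delta w_\infty$ are exactly what is needed for the even reflection $\bar w_\infty$ to be a $W^{4,2}_{\mathrm{loc}}$ (indeed $C^{3,1}$) solution of $\Delta^2\bar w_\infty=2t_\infty k_{(P_4,P_3)}e^{4\bar w_\infty}$ on all of $\mathbb R^4$, with $\int_{\mathbb R^4}e^{4\bar w_\infty}=2\int_{\mathbb R^4_+}e^{4w_\infty}\le 2$, nonnegative scalar curvature (the $T$-curvature condition $T\equiv 0$, i.e. $\partial R/\partial n=0$, ensures the reflected scalar curvature is continuous and nonnegative), and $\sup\bar w_\infty=1$ attained. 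Now Theorem \ref{theorem:normal} and Theorem \ref{theorem:classification} apply to $\bar w_\infty$ as before, giving the total integral $8\pi^2/(t_\infty k_{(P_4,P_3)})>2$, which contradicts $\int_{\mathbb R^4}e^{4\bar w_\infty}\le 2$. I expect the main obstacle to be precisely this reflection step: one must check that the two Neumann conditions are exactly enough regularity/compatibility to glue across the hyperplane (no jump in the third normal derivative of $\bar w_\infty$, which is where $\partial\Delta w_\infty/\partial n=0$ enters), that the totally geodesic hypothesis really does kill the cross terms in the rescaled metric so the limit metric is flat, and that $T\equiv 0$ (resp.\ $T>0$ in the \ref{eq:bb2ls} case, handled by the same reflection with the roles of the bulk and boundary equations swapped) is what one needs for the scalar-curvature sign to survive reflection. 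Everything else is a routine transcription of Section 4.
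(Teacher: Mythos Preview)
Your proposal is correct and follows essentially the same route as the paper: reduce to a uniform supremum bound, blow up, and in the boundary case use the two Neumann conditions to reflect evenly to a full $\mathbb R^4$ solution with $\int e^{4\bar w_\infty}\le 2$, then invoke Theorems \ref{theorem:normal} and \ref{theorem:classification} together with $k_{(P_4,P_3)}<4\pi^2$. One small slip: you have the roles of the two arguments for bounding $\overline w_t$ reversed --- the Moser--Trudinger inequality combined with $\int e^{4w_t}=1$ gives the \emph{lower} bound on $\overline w_t$, while $\sup w_t<C$ directly gives the \emph{upper} bound.
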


\begin{proof}
We use the notation $g$ instead of $g_1$ for conveniece. It is enought to show that $\sup_t w_t < C$ uniformly. Suppose there exists $t_n \rightarrow t_\infty$, $w_n$ solution at $t_n$, $w_n(p_n) \rightarrow +\infty$, $p_n \rightarrow p$. Let $d_n = \mathrm{dist}(p_n, \partial M)$, and choose $r_n >0$ such that $w_n(p_n) + \log r_n = 1$.  Clearly $r_n > 0$ and $r_n \rightarrow 0$. Denote $\epsilon_n = \frac{d_n}{r_n}$ and let $\delta$ be the injectivity radius of $M$. Depending on whether $\lim d_n =0$ or $\limsup \epsilon_n < \infty$, we have separate arguments.

Case 1: $\liminf d_n >0$. This is the case when $p_n$ uniformly stays away from the boundary. Let  $d_n > \epsilon$ for some $\epsilon > 0 $.  We define $\tilde{w}_n(x) = w_{t_n}(r_n x) + \log(r_n)$  for $x \in B_{\min \{\epsilon/r_n , \delta/r_n \}}(0)$ using the exponential map. In this case, the blow up argument is the same as the closed manifold case. 

Case 2: $\lim d_n = 0$, and $\limsup \epsilon_n = \infty$. In this case, $p_n$ approaches the boundary at a relatively slow pace so we do not observe the boundary when we blow-up. We define  $\tilde{w}_n(x) = w_n(r_n x) + \log(r_n)$ for $x \in B_{\epsilon_n/2}(0)$. Again, the blow up argument is the same as the closed manifold case.

Case 3: $\lim d_n = 0$, and $\epsilon_n \rightarrow \epsilon$ for some $\epsilon \ge 0$. This is the only case we take the boundary into account after the blow-up. Let $q_n \in \partial M$ such that $\mathrm{dist}(p_n, q_n ) = \mathrm{dist}(p_n, \partial M)$. Using the exponential map centered at $q_n$, and since the boundary is totally geodesic, we can define  $\tilde{w}_n(x) = w(r_n x) + \log(r_n)$  for $x \in B_{\delta/(2r_n)}^+(0)$.  $\tilde w_n(0, \epsilon_n) = w(p_n) = 1$. By Proposition \ref{lemma:benergyestimate}, Lemma \ref{lemma:bstrongestimate}, there exists $w_\infty \in W^{4,2}_{\mathrm{loc}}(\mathbb{R}^4_+)$ s.t. $\tilde{w}_n \rightarrow w_\infty$ in $W^{3,2}(B_R^+(0))$. $\tilde{w}_n \rightharpoonup w_\infty$ in $W^{4,2}(B_R^+(0)).$

It is straightforward to observe that $w_\infty$ is a weak solution of the equation $\Delta^2 w_\infty = 2t_\infty k_{(P_4, P_3)} e^{4w_\infty}$ with boundary conditions $\frac{\partial w_\infty}{\partial y} = 0 $, $\frac{\partial \Delta w_\infty}{\partial y} = 0 $ on $\mathbb{R}^3$. Clearly, we have $\int e^{4w_\infty} \le 1 $, $\sup w_\infty = 1$.

Define $\overline{w}_\infty (x, y) = w_\infty(x, y)$ for $y \ge 0$, $\overline{w}_\infty ( x, y ) = w_\infty (x, -y)$ for $ y \le 0$. Since $\frac{\partial \Delta w_\infty}{\partial y} = 0 $ , $\overline{w}_\infty \in W^{4,2}_{loc}(\mathbb{R}^4)$. We check below that $\overline{w}_\infty$ is a weak solution of  the equation $\Delta^2 \overline{w}_\infty = 2t_\infty k_{(P_4, P_3)} e^{4\overline{w}_\infty}$ on $\mathbb{R}^4$.

\begin{align*}
\int_{\mathbb{R}^4}\overline{w}_\infty  \Delta^2 \phi =& \int_{\mathbb{R}^4_+} w_\infty  \Delta^2  \phi + w_\infty  \Delta^2  \phi(x, -y)  \\
=&  \int_{\mathbb{R}^4_+} \phi \Delta^2  w_\infty  +\phi(x, -y)  \Delta^2  w_\infty \\
&+ \int_{\mathbb{R}^3} w_\infty \frac{\partial \Delta \phi}{\partial y} + \Delta w_\infty \frac{\partial \phi}{\partial y} + w_\infty \frac{\partial \Delta \phi}{\partial y}(x, -y) + \Delta w_\infty \frac{\partial \phi}{\partial y}(x, -y)\\
= & \int_{\mathbb{R}^4_+}   2t_\infty k_{(P_4, P_3)} e^{4{w_\infty}}(\phi(x, y) + \phi(x, -y) )
= \int_{\mathbb{R}^4}2t_\infty k_{(P_4, P_3)} e^{4\overline{w}_\infty}\phi.
\end{align*}

Now the rest of the proof is now the same as in the closed manifold case. Since $\int_{\mathbb{R}^4} e^{4\overline{w}_\infty} \le 2$ and $k_{(P_4, P_3)} <4\pi^2$, the argument is applied to $\overline{w}_\infty - \frac{\log 2}{4}$.
\end{proof}


\begin{theorem} \label{thm:b2compactness}
There exists $C_{\alpha}>0$ such that $||w_t||_{C^{4, \alpha}} < C_{\alpha}$ for every solution $w_t$ of the equation \ref{eq:bb2ls} where $C_\alpha$ independent of $0 \le t \le 1$.
\end{theorem}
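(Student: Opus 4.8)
The proof follows the pattern of Theorem \ref{thm:b1compactness} and Theorem \ref{theorem:compactness}, and I only describe the modifications. The decisive feature of \ref{eq:bb2ls} is structural: the interior equation $(P_4)_2 w_t = 0$ is \emph{linear} and carries no volume nonlinearity, while the concentrating mass lives on the boundary, $\int_{\partial M} e^{3w_t}\, dS = 1$. As in the previous two compactness theorems it is enough to establish a uniform bound $\sup_{\overline M} w_t \le C$: granting this, the boundary data $t k_{(P_4,P_3)} e^{3w_t} - t T_2$ of the fourth-order Neumann problem are bounded, the boundary Moser--Trudinger inequality together with $\int_{\partial M} e^{3w_t}=1$ controls $\fint_{\partial M} w_t$ and hence, via Proposition \ref{lemma:blocalestimate}, the full $W^{2,2}$-norm, and patching Proposition \ref{lemma:benergyestimate} and Lemma \ref{lemma:bstrongestimate} and then bootstrapping through $L^p$- and Schauder-estimates exactly as in the proof of Theorem \ref{theorem:compactness} yields $\|w_t\|_{C^{4,\alpha}} \le C_\alpha$.

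Assume for contradiction that $\sup_{\overline M} w_{t_n} \to +\infty$ along some $t_n \to t_\infty$; then (again by the boundary Moser--Trudinger inequality and $\int_{\partial M} e^{3w_t}=1$) $\fint_{\partial M} w_{t_n} \to -\infty$. Because $(P_4)_2 w_t = 0$ is linear, no genuine concentration can occur at an interior point: rescaling about an interior concentration point as in the first two cases of Theorem \ref{thm:b1compactness} produces a limit $\Delta^2 w_\infty = 0$ on $\mathbb{R}^4$ which, having nonnegative scalar curvature $\Delta w_\infty + |\nabla w_\infty|^2 \le 0$, must be constant; one checks via the subsolution property $\Delta_{g_2}(e^{w_t}) \le \tfrac16 R_2\, e^{w_t}$ with $\partial_{n_2}(e^{w_t}) = 0$ and a mass/bubble-tree argument that this forces the concentration back to the boundary. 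So the blow-up occurs at a point $p_n \to p \in \partial M$; choosing $r_n$ with $w_n(p_n) + \log r_n = 1$ (so $r_n \to 0$) and using the exponential map at the nearest boundary point together with the fact that the boundary is totally geodesic, set $\tilde w_n(x) = w_n(r_n x) + \log r_n$ on $B_{\delta/(2r_n)}^+(0) \subset \mathbb{R}^4_+$, so that $\tilde w_n \le 1$, $\tilde w_n = 1$ at a bounded point, and $\int_{\{y=0\} \cap B_R^+} e^{3\tilde w_n}\, dx' \le \int_{\partial M} e^{3w_n}\, dS = 1$. By Proposition \ref{lemma:benergyestimate} and Lemma \ref{lemma:bstrongestimate} the rescaled solutions converge to a limit $w_\infty \in W^{4,2}_{\mathrm{loc}}(\mathbb{R}^4_+)$ with
$$\Delta^2 w_\infty = 0 \ \text{in}\ \mathbb{R}^4_+, \qquad \frac{\partial w_\infty}{\partial y} = 0, \qquad \frac12\,\frac{\partial (\Delta w_\infty)}{\partial y} = t_\infty k_{(P_4,P_3)}\, e^{3w_\infty}\ \text{on}\ \mathbb{R}^3,$$
and $\int_{\mathbb{R}^3} e^{3w_\infty}\, dx' \le 1$, $\sup w_\infty = 1$, $\Delta w_\infty + |\nabla w_\infty|^2 \le 0$ (the lower-order terms of $(P_3)_2$ and the $-t_n T_2$ term drop out because $\tilde g_n$ flattens and $r_n \to 0$). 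Hence $e^{2w_\infty}|dx|^2$ is a conformally flat metric on $\mathbb{R}^4_+$ with $Q \equiv 0$ in the interior and $T \equiv t_\infty k_{(P_4,P_3)}$, $H \equiv 0$ on $\mathbb{R}^3$.

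It remains to classify this boundary bubble. Since its scalar curvature is nonnegative at infinity, the half-space analogue of Theorem \ref{theorem:normal} (equivalently the boundary analogue of Definition \ref{def:normal}) applies to the trace $u := w_\infty|_{\mathbb{R}^3}$ and yields, after the appropriate constant normalization, the integral representation $u(x') = \alpha_3 \int_{\mathbb{R}^3} \log\!\big(|y'|/|x'-y'|\big)\, e^{3u(y')}\, dy' + C_0$; Theorem \ref{theorem:classification} with $m=3$ then forces $u(x') = \log\!\big(2\lambda/(\lambda^2 + |x'-x_0'|^2)\big)$ up to an additive constant. Consequently $e^{2w_\infty}|dx|^2$ conformally compactifies to a conformally flat metric on $S^4_+$ with totally geodesic boundary, so the Chern--Gauss--Bonnet formula gives total $(Q,T)$-curvature $4\pi^2 \chi(S^4_+) = 4\pi^2$; as $Q \equiv 0$ and $T \equiv t_\infty k_{(P_4,P_3)}$ this reads $t_\infty k_{(P_4,P_3)} \int_{\mathbb{R}^3} e^{3w_\infty}\, dx' = 4\pi^2$, whence $\int_{\mathbb{R}^3} e^{3w_\infty}\, dx' = 4\pi^2/(t_\infty k_{(P_4,P_3)}) > 1$ since $t_\infty \le 1$ and $k_{(P_4,P_3)} < 4\pi^2$ — contradicting $\int_{\mathbb{R}^3} e^{3w_\infty}\, dx' \le 1$. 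Therefore $\sup_{\overline M} w_t$ is uniformly bounded and the theorem follows, which also completes the curvature analysis needed for Theorem \ref{thm:main2}.

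The main obstacle, I expect, is the classification step: setting up rigorously the half-space, three-dimensional-boundary version of the normality criterion of Theorem \ref{theorem:normal} — in particular controlling the harmonic tail of the biharmonic Neumann extension so that it contributes only an additive constant to $u$ — so that Theorem \ref{theorem:classification} applies with $m = 3$. A secondary point requiring care, absent in Theorem \ref{thm:b1compactness}, is the rigorous exclusion of interior concentration, which rests on the linearity of $(P_4)_2 w_t = 0$ and the subsolution estimate above; the remaining ingredients (the boundary elliptic estimates, the trichotomy on $\mathrm{dist}(p_n,\partial M)/r_n$, and the final mass computation) are routine given Propositions \ref{lemma:blocalestimate}, \ref{lemma:benergyestimate}, Lemma \ref{lemma:bstrongestimate}, and the closed-manifold arguments of Theorem \ref{theorem:compactness}.
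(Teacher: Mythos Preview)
Your outline matches the paper's proof closely: reduce to $\sup w_t<C$ via the boundary Moser--Trudinger inequality and Proposition~\ref{lemma:blocalestimate}/Lemma~\ref{lemma:bstrongestimate}, rescale at a putative blow-up point with the trichotomy on $d_n/r_n$, and in the boundary case obtain a biharmonic limit on $\mathbb{R}^4_+$ with $\partial_y w_\infty=0$, $\partial_y\Delta w_\infty = 2t_\infty k_{(P_4,P_3)}e^{3w_\infty}$, $\Delta w_\infty+|\nabla w_\infty|^2\le 0$, and $\int_{\mathbb{R}^3}e^{3w_\infty}\le 1$. Where you and the paper diverge is precisely at your self-identified ``main obstacle''. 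The paper does not invoke an abstract half-space analogue of Theorem~\ref{theorem:normal}; it \emph{proves} the required representation directly: set $v(x)=\frac{1}{2\pi^2}\int_{\mathbb{R}^3}\log\frac{|y|}{|x-y|}\,t_\infty k_{(P_4,P_3)}e^{3w_\infty(y)}\,dy$ and $h=w_\infty-v$, reflect evenly across $\{y=0\}$ (legitimate since $\partial_y h=\partial_y\Delta h=0$), and use the mean-value property together with $\Delta w_\infty\le -|\nabla w_\infty|^2$ to show first $\Delta h\le 0$ (hence constant by Liouville) and then $|\nabla h|$ bounded (hence $h$ constant). This Liouville step is the actual content you are missing; once you have it, restricting the integral identity to $\mathbb{R}^3$ puts you exactly in the hypotheses of Theorem~\ref{theorem:classification} with $m=3$. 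The paper then reads off $\int_{\mathbb{R}^3}e^{3\hat w_\infty}=2\pi^2$ by direct integration of the explicit Xu profile rather than via Chern--Gauss--Bonnet; your route via conformal compactification to $S^4_+$ is heuristically correct but adds an unnecessary layer (and you would still need to justify the compactification, which is essentially the same decay information encoded in the integral representation).

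On your ``secondary point'', you are right to be suspicious: for \ref{eq:bb2ls} the interior equation is $(P_4)_2 w_t=0$, so an interior rescaling gives $\Delta^2 w_\infty=0$ on $\mathbb{R}^4$ with no volume constraint, and normality only yields $w_\infty\equiv 1$ --- not an immediate contradiction. The paper is terse here (``dealt with by the exactly same argument''), and your instinct that something extra is needed is sound; but your proposed fix via a ``mass/bubble-tree argument'' is too vague to count as a proof. One clean way to close this is to note that if the limit is constant then the rescaled scalar-curvature inequality $\Delta\tilde w_n+|\nabla\tilde w_n|^2\le \tfrac16 r_n^2 R_2$ forces $\Delta w_\infty+|\nabla w_\infty|^2\equiv 0$, and a slight shift of the blow-up center (to the nearest boundary point $q_n$ at scale $r_n$) already places you in Case~3, since a constant interior bubble carries no $Q$-mass and cannot obstruct the boundary analysis. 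Either way, the substantive new ingredient relative to Theorems~\ref{theorem:compactness} and \ref{thm:b1compactness} is the Liouville-type normality lemma on $\mathbb{R}^4_+$; supplying that argument would make your proof complete and essentially coincide with the paper's.
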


\begin{proof}
Again, we abuse the notation $g$ with $g_2$ for conveniece. Assume that $\sup_t w_t < C$ uniformly. We apply $L^2$-esimtate for Neumann boundary problem to $w_t - (w_t)_{\partial M}$. We have
\begin{align*}
||\nabla^2 w_t ||_{W^{2,2}(M)} \le C (||w_t - (w_t)_{\partial M}||_{W^{1,2}(M)} + ||k_{(P_4, P_3)} e^{3w_t}||_{W^{1/2,2}(\partial M)} + ||T||_{W^{1/2,2}(\partial M)} ).
\end{align*}
As $||w_t - (w_t)_{\partial M}||_{W^{1,2}(M)}$ is bounded by Lemma \ref{lemma:blocalestimate}, we have the estimate
\begin{equation*}
|| e^{3w_t}||_{W^{1/2,2}(\partial M)}\le Ce^{3M} (1+ ||w_t-(w_t)_{\partial M}||_{W^{1/2,2}(\partial M)}) \le Ce^{3M} (1+ ||w_t - (w_t)_{\partial M}||_{W^{1,2}(M)})
\end{equation*}
by the trace Sobolev inequality. Thus, we have uniform bound on $||w_t - (w_t)_{\partial M} ||_{W^{4,2}(M)}$. 
The upperbound on $(w_t)_{\partial M}$ is given by the Jensen's inequality, and the lower bound is derived from the boundary Moser-Trudinger inequality  \cite[Lemma 2.4]{N2}. Therefore, it is sufficient to show that $\sup_t w_t < C$ to prove the theorem.

Suppose there exists $t_n \rightarrow t_\infty$, $w_n$ solution for $t_n$, $w_n(p_n) \rightarrow +\infty$, $p_n \rightarrow p$. Let $d_n = \mathrm{dist}(p_n, \partial M)$, and choose $r_n >0$ such that $w_n(p_n) + \log r_n = 1$.  Clearly $r_n > 0$ and $r_n \rightarrow 0$. Denote $\epsilon_n = \frac{d_n}{r_n}$ and let $\delta$ be the injectivity radius of $M$. Case 1 and Case 2, as in the proof of Theorem \ref{thm:b1compactness}, are dealt with by the  exactly same argument. We only need to take care of Case 3: $\lim d_n = 0$, and $\epsilon_n \rightarrow \epsilon$ for some $\epsilon \ge 0$.

Let $q_n \in \partial M$ such that $\mathrm{dist}(p_n, q_n ) = \mathrm{dist}(p_n, \partial M)$. Using the exponential map centered at $q_n$, and since the boundary is totally geodesic, we can define  $\tilde{w}_n(x) = w_n(r_n x) + \log(r_n)$  for $x \in B_{\delta/(2r_n)}^+(0)$.  $\tilde{ w}_n(0, \epsilon_n) = w_n(p_n) = 1$. By Proposition \ref{lemma:benergyestimate}, Lemma \ref{lemma:bstrongestimate}, there exists $w_\infty \in W^{4,2}_{\mathrm{loc}}(\mathbb{R}^4_+)$ s.t. $\tilde{w}_n \rightarrow w_\infty$ in $W^{3,2}(B_R^+(0))$. $w_n \rightharpoonup w_\infty$ in $W^{4,2}(B_R^+(0)).$

It is easy to show that $w_\infty$ satisfies the equation $\Delta^2 w_\infty = 0$ with the boundary conditions $\frac{\partial w_\infty}{\partial y} = 0 $ and $\frac{\partial \Delta w_\infty}{\partial y} = 2t_\infty k_{(P_4, P_3)} e^{3w_\infty}  $ on the boundary $\mathbb{R}^3$. Clearly, we have $\Delta w_\infty  + |\nabla w_\infty |^2 \le 0$, $\int_{\mathbb{R}^3} e^{3w_\infty} \le 1 $, $\sup w_\infty = 1$. 

Our next goal is to prove that $w_\infty$ is normal for our $T$-curvature problem, as defined in a manner similar to Definition \ref{def:normal}. We imitate the proof of Theorem \ref{theorem:normal}.
\begin{lemma}
$w_\infty (x) = \frac{1}{2\pi^2} \int_{\mathbb{R}^3} \log{(\frac{|y|}{|x-y|})}t_\infty k_{(P_4, P_3)} e^{3w_\infty (y)} dy + C_0$.
\end{lemma}
\begin{proof}
Let $v =  \frac{1}{2\pi^2} \int_{\mathbb{R}^3} \log{(\frac{|y|}{|x-y|})}t_\infty k_{(P_4, P_3)} e^{3w_\infty (y)} dy$ and $h = w_\infty -v$. Then, we have $\Delta^2 h = 0$ on the upper half plane and $\frac{\partial h}{\partial y} = 0$ and $\frac{\partial \Delta h}{\partial y} = 0 $ on the boundary. We will show that $h$ is a constant. 

$\Delta h$ is a harmonic function with $\frac{\partial \Delta h}{\partial y} = 0 $ allowing us to consider its even reflection, which yields a smooth, entire, harmonic function. We can also extend $h$, $w_\infty$ and $v$ to be $C^2$ functions on the entire plane using even reflection. We abuse notations $h$, $w_\infty$, and $v$ to refer to these extended function. By the mean value theorem, we have
\begin{align*}
\Delta h(x_0) & = \fint_{\partial B_r(x_0)} \Delta h\\
& \le -\fint_{\partial B_r(x_0)} |\nabla w_\infty |^2 - \fint_{\partial B_r(x_0)} \Delta v \\
& \le  - C \int_{\mathbb{R}^3} \big[\fint_{S^3} \frac{1}{|r\sigma +x_0 -y|^2}\big]tk_{(P_4, P_3)} e^{3w_\infty (y)} \\
& \le -\frac{C}{\pi r^2}.
\end{align*}
If we take $r \rightarrow \infty$, we obtain $\Delta h (x_0) \le 0$ for all $x_0 \in \mathbb{R}^4$. Therefore, by Liouville's theorem, we have $ \Delta h \equiv c_0$ for some $c_0 \le 0$.

Again by applying the mean value theorem and Cauchy Schwartz inequality, we obtain
\begin{equation*}
|\nabla h|^2(x_0) \le C \fint_{\partial B_r(x_0)} |\nabla h|^2.
\end{equation*}
Next, we observe that
\begin{align*}
|\nabla h |^2(x_0) &\le 2|\nabla w_\infty |^2(x_0) + 2|\nabla v|^2(x_0)\\
& \le -2\Delta w_\infty (x_0) + 2|\nabla v|^2(x_0)\\
&\le  -2c_0 - 2\Delta v(x_0) + 2|\nabla v|^2(x_0).
\end{align*}
We handle the $\Delta v$ term as shown previously. To estimate $|\nabla v|^2(x_0)$, we have
\begin{equation*}
|\nabla v|^2(x_0) \le C(\int_{\mathbb{R}^3} \frac{1}{|x_0 - y|^2} k_{(P_4, P_3)} e^{3w_\infty (y)})(\int_{\mathbb{R}^3}k_{(P_4, P_3)} e^{3w_\infty (y)}) \le C\int_{\mathbb{R}^3} \frac{1}{|x_0 - y|^2} e^{3w_\infty (y)}.
\end{equation*}
This can also be estimated as $\Delta v$ using the same technique.  Hence, taking $r \rightarrow \infty$, we see that $|\nabla h|^2$ is bounded. Since every partial derivative of $h$ is  a harmonic function, it is contant by Liouville theorem. Therefore, $\Delta h = c_0 = 0 $, and we can conclude that $h$ is a constant.
\end{proof}

Now by the above lemma, we have
\begin{equation*}
w_\infty (x)-(\frac{\log{2/(t_\infty k_{(P_4, P_3)})}}{3}) = \frac{1}{\pi^2} \int_{\mathbb{R}^3} \log{(\frac{|y|}{|x-y|})} e^{3(w_\infty (y)-(\frac{\log{2/(t_\infty k_{(P_4, P_3)})}}{3}))} dy + C_0
\end{equation*}

Let $\hat{w}_\infty (x) = w_\infty (x)-(\frac{\log{2/(t_\infty k_{(P_4, P_3)})}}{3})$. 
The following integral equation holds.
\begin{equation}
\hat{w}_\infty (x) = \frac{1}{\pi^2} \int_{\mathbb{R}^3} \log{(\frac{|y|}{|x-y|})} e^{3\hat{w}_\infty (x)}dy  +C_0.
\end{equation}
By Theorem \ref{theorem:classification},  we have $\hat{w}_\infty (x) =  \log{(\frac{2\lambda}{\lambda^2 + |x|^2}})$ for some $\lambda >0$. Then, we see that $2\pi^2 = \int_{\mathbb{R}^3} e^{3\hat{w}_\infty} =  \int_{\mathbb{R}^4} e^{3w_\infty} \cdot (\frac{t_\infty k_{(P_4, P_3)}}{2})$, or $ \int_{\mathbb{R}^3} e^{3w_\infty } = \frac{4\pi^2}{t_\infty k_{(P_4, P_3)}} > 1$. This contradicts $\int_{\mathbb{R}^3} e^{3w_\infty } \le 1$.
\end{proof}

\section{Proof of Theorem 1.2}
\begin{theorem}
Let $(M, g)$ be a closed manifold with umbilic boundary and the conformal invariants $k_{(P_4, P_3)}$ and $Y(M, \partial M, [g])$ positive. Then there exists conformal deformations $w_1$, $w_2$ such that

$\begin{cases} 
Q_{w_1} \equiv \frac{k_{(P_4, P_3)}}{Vol(M, g_{w_1})}, \, R_{w_1}>0  \text{ on $M$} \\ 
T_{w_1} \equiv 0, \, H_{w_1} \equiv 0  \text{ on $\partial M$}
\end{cases}$
 and    \,
$\begin{cases} 
Q_{w_2} \equiv 0, \, R_{w_2}>0 \text{ on $M$} \\ 
T_{w_2} \equiv \frac{k_{(P_4, P_3)}}{Vol(\partial M, g_{w_2})}, \, H_{w_2} \equiv 0 \text{ on $\partial M$}.
\end{cases}$

\end{theorem}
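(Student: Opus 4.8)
The plan is to run the Leray--Schauder degree argument from the proof of Theorem \ref{thm:compactthm} twice, once for each of the one-parameter families \ref{eq:bb1ls} and \ref{eq:bb2ls}. First I would invoke Proposition \ref{lemma:bQR} to reduce to the case in which the background metric $g_1$ has totally geodesic boundary with $Q_1,R_1>0$ on $M$ and $T_1\equiv 0$ on $\partial M$, while $g_2$ has totally geodesic boundary with $Q_2\equiv 0$ on $M$ and $R_2,T_2>0$ on $\partial M$; after rescaling I would normalize $\mathrm{Vol}(M,g_1)=1$ and $\mathrm{Vol}(\partial M,g_2)=1$, and I fix $0<\alpha<1$.

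For the construction of $w_1$, I would work with the normalization $\int_M w\,dV_{g_1}=0$ in place of $\int_M e^{4w}\,dV_{g_1}=1$ (the two differ by an additive constant, and Theorem \ref{thm:b1compactness} still yields a uniform bound $C_\alpha$ on the $C^{4,\alpha}(M)$-norm of solutions). In the Banach space
\[
X_1=\Big\{u\in C^{4,\alpha}(M) : \tfrac{\partial u}{\partial n_1}\equiv 0 \text{ on }\partial M,\ \int_M u\,dV_{g_1}=0\Big\}
\]
set $\mathcal{O}_1=\{u\in X_1 : R_u>0,\ \|u\|_{C^{4,\alpha}}<C_\alpha\}$, a bounded open set, where $R_u$ denotes the scalar curvature of $e^{2u}g_1$. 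Since, by \cite[Theorem 1.9]{CN} and conformal invariance, the fourth-order Neumann problem for $(P_4)_1$ on functions with $\partial_{n_1}(\cdot)\equiv 0$ is non-negative with kernel the constants, it is invertible modulo constants on mean-zero data; define $F_t:X_1\to X_1$ by $F_t(u)=2t\,(P_4)_1^{-1}\bigl(k_{(P_4,P_3)}e^{4u}-Q_1\bigr)$, the right-hand side being normalized to have vanishing mean exactly as in the proof of Theorem \ref{thm:compactthm}, so that the fixed points of $F_t$ are precisely the solutions of \ref{eq:bb1ls}. By the Neumann boundary regularity of Section 8 (Lemma \ref{lemma:bstrongestimate} and the companion $L^p$ and Schauder estimates), $\{F_t\}_{t\in[0,1]}$ is a continuous family of compact operators. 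At $t=0$ one has $F_0\equiv 0$ with $w_0\equiv 0$ the unique fixed point, so the Leray--Schauder degree of $Id-F_0$ in $\mathcal{O}_1$ at $0$ equals $1$; homotopy invariance then forces a solution at $t=1$, provided $0\notin(Id-F_t)(\partial\mathcal{O}_1)$ for all $t$. A boundary solution $w_t\in\partial\mathcal{O}_1$ has either $\|w_t\|_{C^{4,\alpha}}=C_\alpha$, which contradicts Theorem \ref{thm:b1compactness}, or $R_{w_t}\ge 0$ with $R_{w_t}(p_0)=0$ at some $p_0\in\overline M$, which is excluded exactly as in the second half of Lemma \ref{lemma:bstarting}: the maximum principle handles an interior $p_0$ (using $Q_{w_t}>0$ and $Y(M,\partial M,[g])>0$), and at a boundary $p_0$ Hopf's lemma forces $T_{w_t}(p_0)<0$, contradicting $T_{w_t}\equiv 0$. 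The resulting $w_1$ satisfies $Q_{w_1}\equiv k_{(P_4,P_3)}/\mathrm{Vol}(M,g_{w_1})$, $T_{w_1}\equiv 0$, $H_{w_1}\equiv 0$ and $R_{w_1}>0$.

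For $w_2$ I would repeat this verbatim, now with the family \ref{eq:bb2ls}, the normalization $\oint_{\partial M}w\,dS_{g_2}=0$, the constant $C_\alpha$ from Theorem \ref{thm:b2compactness}, and $(P_4)_1$ replaced by the boundary-coupled pair $((P_4)_2,(P_3)_2)$, which is again non-negative with kernel the constants by \cite[Theorem 1.9]{CN}; its solution operator sends boundary data $\sigma$ (with $\oint_{\partial M}\sigma=0$) to the $v$ with $(P_4)_2 v=0$ in $M$ and $(P_3)_2 v=\sigma$ on $\partial M$, and $F_t(u)$ corresponds to $\sigma=t\,k_{(P_4,P_3)}e^{3u}-tT_2$ (suitably normalized). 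Here a boundary zero of $R_{w_t}$ is excluded because $Q_{w_t}\equiv 0$ forces $-\Delta_{w_t}R_{w_t}+\tfrac14 R_{w_t}^2=3|E_{w_t}|^2\ge 0$ in the interior, while $T_{w_t}=t\,k_{(P_4,P_3)}e^{-3w_t}+(1-t)T_2>0$ on $\partial M$, so the maximum principle and Hopf lemma argument of Lemma \ref{lemma:bstarting} applies with no change. This produces $w_2$ with $Q_{w_2}\equiv 0$, $T_{w_2}\equiv k_{(P_4,P_3)}/\mathrm{Vol}(\partial M,g_{w_2})$, $H_{w_2}\equiv 0$ and $R_{w_2}>0$.

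The main obstacle, as in the closed case, is verifying $0\notin(Id-F_t)(\partial\mathcal{O}_i)$ over the full range $t\in[0,1]$: the uniform $C^{4,\alpha}$-bound is precisely the content of Theorems \ref{thm:b1compactness} and \ref{thm:b2compactness}, so everything hinges on the positivity of $R_{w_t}$ being preserved along each deformation. This is exactly why the domains $\mathcal{O}_i$ must be cut out by $\{R_u>0\}$, and why the boundary signs $T_{w_t}\equiv 0$ (respectively $T_{w_t}>0$) are indispensable: they are what let Hopf's lemma on the totally geodesic boundary rule out a boundary zero of the scalar curvature.
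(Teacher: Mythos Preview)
Your proposal is correct and follows exactly the approach indicated in the paper's (very brief) proof: you run the Leray--Schauder degree argument of Theorem~\ref{thm:compactthm} on the families \ref{eq:bb1ls} and \ref{eq:bb2ls}, invoking Theorems~\ref{thm:b1compactness} and \ref{thm:b2compactness} for the $C^{4,\alpha}$-bound and the Hopf boundary point lemma (together with the signs $T_{w_t}\equiv 0$, respectively $T_{w_t}>0$) to exclude a boundary zero of the scalar curvature. The paper's own proof is a two-sentence sketch pointing to precisely these ingredients, so your write-up is essentially an expanded version of it.
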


\begin{proof}
To apply the degree theory, we use Theorem \ref{thm:b1compactness} and Theorem \ref{thm:b2compactness}. However, unlike the proof of Theorem \ref{thm:compactthm}, we need to use the Hopf's strong maximum principle to demonstrate that the degree is well-defined and invariant with respect to the parameter $t$.
\end{proof}

\end{document}